\theoremstyle{plain} 
\definecolor{MidnightBlue}{rgb}{0.1, 0.1, 0.44}
\definecolor{Plum}{rgb}{0.56, 0.27, 0.52}
\newcommand{\ccong}{\mathbin{\rotatebox[origin=c]{90}{$\cong$}}}
\author[B.~Dequêne]{Benjamin Dequêne}
\address[B.~Dequêne]{Département de mathématiques, LaCIM, Université du Québec à Montréal}
\email{dequene.benjamin@courrier.uqam.ca}
\title[Jordan recoverability of some subcategories]{Jordan Recoverability of some subcategories of modules over gentle algebras}
\newtheorem{theorem}{Theorem}[section]
\newtheorem{lemma}[theorem]{Lemma}
\newtheorem{prop}[theorem]{Proposition}
\newtheorem{cor}[theorem]{Corollary}
\newtheorem{conj}[theorem]{Conjecture}
\theoremstyle{definition}
\newtheorem{definition}[theorem]{Definition}
\newtheorem{example}[theorem]{Example}
\newenvironment{ex}    % This is the environment name for the input
{%
	\pushQED{\qed}\begin{example}}
	{\popQED\end{example}}
\theoremstyle{remark}
\newtheorem{remark}[theorem]{Remark}
\newcommand{\Hom}{\operatorname{Hom}}
\newcommand{\vdim}{\mathbf{dim}}
\newcommand{\new}[1]{\textit{\color{Plum}{#1}}}
\begin{document}
	
\begin{abstract}
	Gentle algebras form a class of finite-dimensional algebras introduced by I. Assem and A. Skowro\'{n}ski in the 1980s. Modules over such an algebra can be described by string and band combinatorics in the associated gentle quiver from the work of M.C.R. Butler and C.M. Ringel. Any module can be naturally associated to a quiver representation. A nilpotent endomorphism of a quiver representation induces linear transformations over vector spaces at each vertex. Generically among all nilpotent endomorphisms, a well-defined Jordan form exists for these representations. If $\mathcal{Q} = (Q, I)$ is a finite connected gentle quiver, we show a characterization of the vertices $m$ such that representations consisting of direct sums of indecomposable representations all including $m$ in their support, the subcategory of which we denote by $\mathscr{C}_{\mathcal{Q},m}$, are determined up to isomorphism by this invariant. 
\end{abstract}
	
	\maketitle

	\tableofcontents
	
	\section{Introduction}
	\label{s:Intro1}
	Let $\mathcal{Q} = (Q, I)$ be a finite connected gentle quiver, i.e. a finite connected quiver $Q$ provided with an admissible ideal $I$ that satisfies some additional conditions that are defined later in the next section. They were introduced by I. Assem and A. Skowro\'{n}ski \cite{AS87}. A lot of recent studies are still being made nowadays of this interesting class of algebras: see for example \cite{APS19}, \cite{OPS18}, \cite{BS18}, \cite{PPP18}, \cite{FGLZ21}, \cite{BDMT19}. As a good reference to learn some basics about what is a gentle algebra and some of the underlying combinatoric behaviors that we will also see throughout the article, we can suggest to the reader the following reference: \cite{PPP182}.
	
	Let $X$ be a finite-dimensional representation of $\mathcal{Q}$ over an algebraically closed field $\mathbb{K}$. Let $N$ be a nilpotent endomorphism of $X$. At each vertex $q \in Q_0$, $N$ induces a nilpotent endomorphism $N_q$ of $X_q$. Thus we can extract from $N$ a sequence of integer partitions $\lambda^q \vdash \dim(X_q)$ which correspond to the Jordan blocks size of the Jordan form of each $N_q$. Thanks to a result from \cite{GPT19}, $\underline{\lambda} = (\lambda^q)_{q \in Q_0}$ is well-defined for a generic choice of $N$. We denote $\mathsf{GenJF}(X)$ this tuple that we will refer to the \new{generic Jordan form data} of $X$.
	
	An interesting question that we could ask ourselves is:
	\begin{center}
		\textbf{ ``For which subcategories $\mathscr{C}$ of the category $\mathsf{rep}(\mathcal{Q})$ could we recover up to isomorphism $X \in \mathscr{C}$ from $\mathsf{GenJF}(X)$?"}
	\end{center}
	Such a subcategory is called \new{Jordan recoverable}. Throughout the article, we mean a full subcategory closed under direct sums and direct summands by a subcategory. 
	
	Let us see the following toy example.
	\begin{ex} \label{A2ex1}
		Let $\mathcal{Q}$ be the following $A_2$-type quiver.
		\begin{center}
			$\mathcal{Q} = \xymatrix{1 \ar[r]^a & 2 }$
		\end{center}
		The indecomposable representations of $\mathcal{Q}$ are 
		\begin{center}
			$S_1 = \xymatrix{\mathbb{K} \ar[r] & 0}$ \qquad $S_2 = \xymatrix{0 \ar[r] & \mathbb{K}}$  \qquad $P_1 = \xymatrix{\mathbb{K} \ar[r]^1 & \mathbb{K}}$ 
		\end{center}
		We want to know which of the subcategories generated by those indecomposable representations are Jordan recoverable and which are not:
		\begin{itemize}
			\item[•] Let $X = S_1^n$ and let $N = (N_1,N_2)$ be a nilpotent endomorphism of $X$. We can see we have no restriction to define $N_1$. This implies that if we choose $N_1$ generically then $N_1$ is similar to a Jordan block of size $n$. So $\mathsf{GenJF}(X) = ((n),(0))$ and clearly $\mathsf{add}(S_1)$ is Jordan recoverable;
			
			\item[•] For the same reason $\mathsf{add}(S_2)$ is Jordan recoverable too;
			
			\item[•] Let $X = P_1^m$ and $N = (N_1,N_2)$ be a nilpotent endomorphism of $X$. By the commutative square relation that $N$ has to satisfy ($X_a N_1 = N_2 X_a$), and as $X_a$ is a bijective map, $N_1$ and $N_2$ can be identified. Moreover we note there is no restriction to define $N_1$. So $\mathsf{GenJF}(X) = ((m),(m))$ and $\mathsf{add}(P_1)$ is Jordan recoverable;
			
			\item[•] Let $X = S_1^n \oplus S_2^k$ and $N = (N_1,N_2)$ be a nilpotent endomorphism of $X$. We can see we have no restriction to define $N_1$ and $N_2$, and they can be defined independently of one another. Thus $\mathsf{GenJF}(X) = ((n),(k))$ and $\mathsf{add}(S_1,S_2)$ is Jordan recoverable;
			
			\item[•] Let $X = S_2^k \oplus P_1^m$ and $N = (N_1,N_2)$ be a nilpotent endomorphism of $X$. We can see thanks to commutative square relations that if we define $N_2$, we have defined $N_1$. Moreover if we adapt a choice of $N_1$ such that it corresponds to a choice of $N_2$, we can see we have no restriction to define $N_2$. Hence $\mathsf{GenJF}(X) = ((m),(m+k))$ and $\mathsf{add}(S_2, P_1)$ is Jordan recoverable;
			
			\item[•] By an analogous argument we have $\mathsf{add}(S_1,P_1)$ Jordan recoverable;
			
			\item[•] However, $\mathsf{add}(S_1,S_2,P_1) = \mathsf{rep}(\mathcal{Q})$ is not Jordan recoverable. Indeed this comes from the fact that $\mathsf{GenJF}(S_1 \oplus S_2) = ((1),(1)) = \mathsf{GenJF}(P_1)$ by the previous results. \qedhere
		\end{itemize}
	\end{ex}
	Obviously a subcategory where the dimension vectors of its indecomposable elements form a linearly independent family is Jordan recoverable. Indeed that comes from the fact that any representation of this kind of subcategory is entirely determined by its dimension vectors. However, in general, it is not an easy question to determine which subcategories of $\mathsf{rep}(\mathcal{Q})$ are Jordan recoverable.
	
	For this purpose, we can ask for something a bit more restrictive. There is a procedure that, at least in some cases, allows one to reconstruct $X$ from $\mathsf{GenJF}(X)$: we can see if there exists a generic choice of a representation $Y$ in $\mathsf{rep}(\mathcal{Q})$ among representations that admit a nilpotent endomorphism with Jordan forms encoded by $\mathsf{GenJF}(X)$, and then we can ask if $Y$ is isomorphic to $X$. 
	
	In more concrete words, for a fixed subcategory $\mathscr{C}$ of $\mathsf{rep}(\mathcal{Q})$, we can ask ourselves if for any $X \in \mathscr{C}$ there is a (Zariski) dense open set $O$ in the variety of representations which admit a nilpotent endomorphism with a Jordan form equal to the generic Jordan form data of $X$, such that any $Y \in O$ is isomorphic to $X$. Such a subcategory $\mathscr{C}$ is said to be \new{canonically Jordan recoverable}. 
	
	Note that a subcategory that is Jordan recoverable is not necessarily canonically Jordan recoverable.
	\begin{ex} \label{JRnotCJRex}
		As in the Example \ref{A2ex1}, let $\mathcal{Q} = \xymatrix{1 \ar[r] & 2}$. Let $\mathscr{C} = \mathsf{add}(S_1, S_2)$. We saw that $\mathscr{C}$ is Jordan recoverable, but it is not canonically Jordan recoverable. If we take $X = S_1 \oplus S_2$ then we get $\mathsf{GenJF}(X) = ((1),(1))$, and we already saw that there is an other representation that has the same Jordan form data: it is $P_1$! For $Y \in\mathsf{rep}(\mathcal{Q})$ admitting a nilpotent endomorphism with a Jordan form given by $((1),(1))$, we have the following diagram to satisfy
		\begin{center}
			$\displaystyle \xymatrix@R=1em @C=1em{
				\mathbb{K}\ar[rr]^\alpha \ar@{.>}[dd]^0 & & \mathbb{K} \ar@{.>}[dd]^0 \\
				& & \\
				\mathbb{K} \ar[rr]^\alpha  & & \mathbb{K}  }$
		\end{center}
		and so a generic choice that we have for the linear morphism $\alpha$ is $k\ \mathsf{id}$ for $0 \neq k \in \mathbb{K}$. Hence $Y \cong P_1 \ncong S_1 \oplus S_2$.\qedhere
	\end{ex}
	Let us see another interesting example.
	\begin{ex} \label{1stGentlex}
		Let us consider the following gentle quiver. 
		\begin{center}
			$\displaystyle \mathcal{Q} = \xymatrix@R=1em @C=1em{1 \ar[rr]_a & \ar@{--}@/^1pc/[rr] & 2 \ar[rr]_b & & 3}$
		\end{center}
		Let $\displaystyle X = \xymatrix@R=1em @C=1em{\mathbb{K} \ar[rr]_1 & \ar@{--}@/^1pc/[rr] & \mathbb{K} \ar[rr] & & 0} \oplus \xymatrix@R=1em @C=1em{0 \ar[rr] & \ar@{--}@/^1pc/[rr] & \mathbb{K} \ar[rr] & & 0} \oplus \xymatrix@R=1em @C=1em{0 \ar[rr] & \ar@{--} @/^1pc/[rr] & \mathbb{K} \ar[rr]_1 & & \mathbb{K}}$. Let $N = (N_1, N_2, N_3)$ be a nilpotent endomorphism of $X$. It is effortless to check that $N_1 = N_3 = 0$. Let $(x_1,x_2,x_3)$ be a basis of $\mathbb{K}^3$, the vector space at the vertex $2$, adapted to the decomposition of $X$ into indecomposable representations above. That means for example that we identify the vector space on the vertex $1$ to the subspace on the vertex $2$ generated by $x_1$.
		
		From the commutative square relations, we need that $\langle x_1 \rangle \subseteq \mathsf{Ker}(N_2)$, $\mathsf{Im}(N_2) \subseteq \langle x_1, x_2 \rangle$ and of course $N_2^3 = 0$. So:
		\begin{itemize}
			\item[•] $N_2(x_1) = 0$;
			
			\item[•] $N_2(x_2) = \alpha x_1 + \beta x_2$ for some $\alpha, \beta \in \mathbb{K}$;
			
			\item[•] $N_2(x_3) = \gamma x_1 + \delta x_2$ for some $\gamma, \delta \in \mathbb{K}$.
		\end{itemize}
		In addition, we can remark that $N_2^2(x_2) = \beta N_2(x_2)$. This implies that $\beta = 0$.
		
		Hence all the allowed choices of $N_2$ can be characterized as matrices of the form
		\begin{center}
			$N(\alpha, \gamma,\delta) = \left( \begin{matrix}
				0 & \alpha & \gamma \\
				0 & 0 & \delta \\
				0 & 0 & 0
			\end{matrix} \right)$
		\end{center}
		Thus, among all the choices we can make to define $N_2$ (and so $N$), there is a generic choice (a dense open set) which comes naturally from generic restrictions $\alpha, \delta \neq 0$ that we can impose. So the Jordan form of that generic choice of $N$ is $((1),(3),(1))$. Then $\mathsf{GenJF}(X) = ((1),(3),(1))$.
		
		Suppose now we have $Y$ such that $\mathsf{GenJF}(Y) = ((1),(3),(1))$. We know $Y$ is of the form:
		\begin{center}
			$\displaystyle \xymatrix@R=1em @C=1em{\mathbb{K} \ar[rr]_{Y_a} & \ar@{--}@/^1pc/[rr] & \mathbb{K}^3 \ar[rr]_{Y_b} & & \mathbb{K}}$
		\end{center} 
		Let $(x)$, $(y_1, y_2,y_3)$ and $(z)$ be respective bases of $Y_1, Y_2$ and $Y_3$ adapted to each nilpotent endomorphism. The following diagram summarizes the situation.
		\begin{center}
			$\displaystyle \xymatrix @R=1.5em @C=4em{
				Y_1 & Y_2 & Y_3  \\
				& y_1 \ar[d]^{N_2} \ar@{-->}[r]^{Y_b}& *++[o][F.]{z} \ar[d]^{N_3} \\
				& y_2 \ar[d]^{N_2} & 0 \\
				x \ar[d]^{N_1} \ar@{-->}[r]^{Y_a} & *++[o][F.]{y_3} \ar[d]^{N_2} &  \\
				0 & 0 & }$
		\end{center}
		By commutative square relations ($Y_a N_1 = N_2 Y_a$ and $Y_b N_2 = N_3 Y_b$), we have:
		\begin{itemize}
			\item[•] $Y_a(x) = \alpha' y_3$ for some $\alpha' \in \mathbb{K}$;
			
			\item[•] $Y_b(y_2) = Y_b(y_3) = 0$ and $Y_b(y_1) = \beta' z$ for some $\beta' \in \mathbb{K}$.
		\end{itemize}
		Thus among all the choices we can make to define $Y_a$ and $Y_b$ (and so $Y$), there exists a generic choice that comes from generic restrictions $\alpha', \beta' \neq 0$. Hence we get $Y \cong X$. We recovered $X$.
	\end{ex}
	Some remarks:
	\begin{itemize}
		\item[•] We get for free $Y_b Y_a = 0$, which is something that seems not guaranteed in the first instance;
		
		\item[•] From what we have done above, if we started with $Z = S_1 \oplus S_2^3 \oplus S_3$, which has the same generic Jordan form data as $X$, we would not recover $Z$. Hence the category $\mathsf{add}(S_1, S_2, S_3)$ is not canonically Jordan recoverable, even though it is Jordan recoverable;
		
		\item[•] If we try to get a representation $Y$ such that its generic Jordan form data is $((1),(1),(1))$, we will have no restriction from the nilpotent endomorphism (as $N=0$) other than $Y_q \cong \mathbb{K}$ for each vertex $q$. But we need to respect the relation $Y_b Y_a = 0$. So we get two isomorphism classes of representations that could be good candidates for a generic choice:
		\begin{center}
			$\displaystyle \xymatrix @R=1em @C=1em{ 
				\mathbb{K} \ar[rr]_{1} & \ar@{--} @/^1pc/ [rr] & \mathbb{K} \ar[rr]_{0}  & & \mathbb{K}  }$ \qquad  $\displaystyle \xymatrix @R=1em @C=1em{ 
				\mathbb{K} \ar[rr]_{0} & \ar@{--} @/^1pc/ [rr] & \mathbb{K} \ar[rr]_{1}  & & \mathbb{K}  }$
		\end{center}
		However, there is no reason to privilege one to the other: the open sets in play are not dense in the set of representations that have $((1),(1),(1))$ as generic Jordan form. Hence there is no generic choice of a representation $Y$ with that generic Jordan form;
		
		\item[•] We give in Example \ref{CJRex} a proof that $\mathsf{add}(I_2, S_2, P_2)$ is canonically Jordan recoverable. The crucial point in this subcategory is we can note the dimension vector $(d_1, d_2, d_3)$ of any representation $X$ satisfies the relation $d_2 \geqslant d_1 + d_3$. This is something that the representations in the previous bullet point do not satisfy which explains why the previous bullet point is not a counter-example to this claim.
	\end{itemize}
	We will study Jordan recoverability and canonical Jordan recoverability of a special kind of subcategories. For $m$ a vertex of $\mathcal{Q}$, let $\mathscr{C}_{\mathcal{Q},m}$ be the subcategory generated by the indecomposable representations that have a nonzero vector space at the vertex $m$.
	
	One of the engrossing notions around the vertices of a quiver that seems important to achieve our goal is the minusculeness of a vertex. A vertex $m$ is said to be \new{minuscule} if for any indecomposable representation $X$ of $\mathcal{Q}$, we have $\dim X_m \leqslant 1$. This notion is a crucial point to get $\mathscr{C}_{\mathcal{Q},m}$ canonically Jordan recoverable by the following main result of \cite{GPT19}.
	\begin{theorem}[\cite{GPT19}] If $Q$ is a Dynkin quiver, and $m$ is a minuscule vertex, then $\mathscr{C}_{\mathcal{Q},m}$ is canonically Jordan recoverable.
	\end{theorem}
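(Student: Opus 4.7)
The plan is to exploit the combinatorial rigidity forced by the minuscule hypothesis at $m$. First, I would recall that for a Dynkin quiver, the indecomposable representations are parametrized by positive roots of the associated root system, and the positive roots whose support contains $m$ form (when $m$ is minuscule) a combinatorially well-behaved poset: the minuscule poset $P_m$, whose order ideals index the indecomposables in $\mathscr{C}_{\mathcal{Q},m}$. The minuscule hypothesis ensures $\dim I_m = 1$ for every such indecomposable $I$, so if $X = \bigoplus_\alpha I_\alpha$ is a decomposition of $X \in \mathscr{C}_{\mathcal{Q},m}$ into indecomposables, then $\dim X_m$ equals the number of summands and a basis of $X_m$ naturally indexes them.

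For the direct direction, I would compute $\GenJF(X)$ as follows. Fix adapted bases of each $X_q$ so that each basis vector is supported in a unique summand $I_\alpha$. The commutative square relations between the $X_q$'s and a nilpotent endomorphism $N = (N_q)_q$ can then be written locally on the minuscule poset, because for arrows incident to $m$ the maps $X_a$ become block maps determined by which summands pass through both endpoints. Genericity in the choice of $N$ amounts to picking a generic strictly upper-triangular action at $m$ (relative to a linear extension of $P_m$), and this propagates uniquely to each vertex through the structure maps of the indecomposables. The generic Jordan form data $\underline{\lambda} = (\lambda^q)_q$ thus becomes a purely combinatorial invariant of the multiset of summands, read off from chains in appropriate subposets of $P_m$.

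For the recovery direction, suppose $Y \in \rep(\mathcal{Q})$ is a generic representation admitting a nilpotent endomorphism with Jordan form $\underline{\lambda} = \GenJF(X)$. I would argue that the dimension vector of $Y$ is forced by $\underline{\lambda}$ (namely $\dim Y_q = |\lambda^q|$), and that within $\rep_{\vdim Y}(\mathcal{Q})$, the locus of representations admitting such a nilpotent endomorphism has a unique irreducible component of maximal dimension, corresponding to a single $GL$-orbit: that of $X$. Concretely, given a generic nilpotent $N$ on $\mathbb{K}^{|\lambda^m|}$ with Jordan form $\lambda^m$, one reconstructs the structure maps $Y_a$ for each arrow $a$ uniquely by specifying, for each basis vector at $m$, which indecomposable summand of $\mathscr{C}_{\mathcal{Q},m}$ it should parametrize, and then propagating along the arrows as dictated by the minuscule poset.

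The main obstacle will be the density argument in the recovery direction: one must show that this reconstruction holds on a Zariski dense open subset of the stratum of representations whose Jordan type is $\underline{\lambda}$. This requires an irreducibility statement for the relevant representation variety together with a dimension count balancing the space of choices in $N$ against those in $Y$. The minuscule hypothesis is essential here, because without it a basis vector at $m$ could contribute to several ``layers'' inside a single indecomposable, which destroys the clean bijection between basis elements of $X_m$ and indecomposable summands on which the whole poset-theoretic bookkeeping rests.
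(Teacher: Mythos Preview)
This theorem is not proved in the present paper: it is quoted from \cite{GPT19} and used as a black box (see the reference as Theorem~\ref{GPT} and its invocation in the proof of Proposition~\ref{1et2b}). So there is no ``paper's own proof'' to compare against; the paper simply imports the result.

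As for your proposal on its own merits: it is a reasonable outline of the architecture of the argument in \cite{GPT19}, but it is not a proof. You correctly identify the role of the minuscule poset and the fact that $\dim X_m$ counts the indecomposable summands, and your description of how a generic nilpotent at $m$ should propagate through the quiver is in the right spirit. However, the two load-bearing steps are left as assertions. First, the claim that $\GenJF(X)$ is a ``purely combinatorial invariant of the multiset of summands, read off from chains in appropriate subposets'' is exactly the content of one of the main theorems of \cite{GPT19}; it requires a careful inductive argument (via reflection functors, or equivalently via the promotion/toggle combinatorics on the minuscule poset) that you have not supplied. Second, and more seriously, the recovery direction hinges on the irreducibility of $\rep(\mathcal{Q},\underline{\lambda})$ and a dimension count, which you acknowledge as ``the main obstacle'' but do not address. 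In the Dynkin case this irreducibility is established in \cite{GPT19} by an explicit analysis, not by a general principle, and the identification of the dense orbit with the isomorphism class of $X$ again uses the minuscule combinatorics in a precise way. Your sketch names the right ingredients but does not carry out either of the two nontrivial computations that constitute the actual proof.
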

	\begin{remark}
		The result presents the minuscule property as a sufficient condition, but not a necessary one. For type $A$ quivers, note that all the vertices are minuscule.
	\end{remark}
	In this article, we give an analogous result (which extends the $A_n-$type quiver case) in the case of a finite connected gentle quiver $\mathcal{Q}$. We note that the condition of minusculeness is not enough to get Jordan recoverability or canonical Jordan recoverability. We need to add some combinatorial conditions, linked to strings of $\mathcal{Q}$.
	
	A \new{string} of $\mathcal{Q}$ can be understood as a walk in the quiver. These walks (satisfying some additional conditions as we shall recall below) allow us to describe all the indecomposable representations of $\mathcal{Q}$. By knowing what is going on with strings, we can give a complete description of all the subcategories $\mathscr{C}_{\mathcal{Q},m}$ that are Jordan recoverable or canonically Jordan recoverable. 
	
	In this article, we first recall some main definitions and tools that will then help us to prove the following statements. The first main result is a characterization of the subcategories $\mathscr{C}_{\mathcal{Q},m}$ that are canonically Jordan recoverable.
	\begin{theorem} \label{main}
		Let $\mathcal{Q} = (Q,I)$ be a finite connected gentle quiver and $m \in Q_0$. The subcategory $\mathscr{C}_{\mathcal{Q},m}$ is canonically Jordan recoverable if and only if $m$ satisfies the following conditions:
		\begin{itemize}
			\item[$(i)$] For any pair of strings $\rho$ and $\nu$ passing through $m$, there is no arrow $\alpha \in Q_1$ which is not an arrow of either string such that $\rho$ passes through $s(\alpha)$ and $\nu$ passes through $t(\alpha)$;
			
			\item[$(ii)$] At least one of the two following conditions :
			\begin{itemize}
				\item[$(a)$] There is at most one arrow $\alpha \in Q_1$ such that $s(\alpha) = m$, and there is at most one arrow $\beta \in Q_1$ such that $t(\beta) = m$ ; if $\alpha$ and $\beta$ both exist then $\alpha \beta \in I$;
				
				\item[$(b)$] There is at most one maximal string of $\mathcal{Q}$ passing through $m$. 
			\end{itemize}		
		\end{itemize}
	\end{theorem}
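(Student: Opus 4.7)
The plan is to prove the biconditional in two stages. For necessity of conditions (i) and (ii), I would exhibit, for each failure mode, a representation $X \in \mathscr{C}_{\mathcal{Q},m}$ whose generic Jordan form data is shared with a generic representation $Y$ not isomorphic to $X$. When (i) fails, we have strings $\rho, \nu$ through $m$ and an arrow $\alpha$ with $s(\alpha)$ on $\rho$ and $t(\alpha)$ on $\nu$ lying outside both strings; setting $X = M(\rho) \oplus M(\nu)$, the arrow $\alpha$ allows one to build representations $Y$ of the same dimension vector as $X$ whose generic map on $\alpha$ is nonzero, producing a non-isomorphic string-module decomposition with identical generic Jordan form (in the spirit of Example~\ref{JRnotCJRex}). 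When (ii) fails, two maximal strings through $m$ combined with the absence of the local relation in (ii)(a) produce an analogous rank-defect phenomenon; this should be visible on a small explicit gentle subquiver containing $m$.

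For sufficiency, I would work under the assumption that both (i) and (ii) hold. Condition (ii)(a), when it holds, forces $m$ to be minuscule, so $\dim M(\sigma)_m = 1$ for every indecomposable $M(\sigma) \in \mathscr{C}_{\mathcal{Q},m}$. Condition (ii)(b), when it holds, reduces the analysis to a single maximal string through $m$, effectively a linear ($A_n$-type) picture already handled in \cite{GPT19}. In either case, indecomposable objects of $\mathscr{C}_{\mathcal{Q},m}$ correspond to substrings of the maximal strings passing through $m$, and any $X \in \mathscr{C}_{\mathcal{Q},m}$ is governed by a multiset of such substrings. Given $X$ with $\mathsf{GenJF}(X) = \underline{\lambda}$, I would then consider a generic representation $Y$ admitting a nilpotent endomorphism $N$ of type $\underline{\lambda}$, fix a basis of $Y_m$ adapted to $N_m$, and propagate this basis along the strings through $m$ using the commutativity relations. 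Condition (i) will guarantee that arrows lying outside the strings contribute zero under a generic choice, so that $Y$ decomposes as the direct sum of string modules corresponding exactly to the summands of $X$.

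The main obstacle will be the sufficiency direction, specifically showing that a generic $Y$ with Jordan type $\underline{\lambda}$ splits as the prescribed direct sum. The delicate combinatorial bookkeeping — determining from $\underline{\lambda}$ alone which substrings of the maximal strings through $m$ appear as summands — is tricky since several substrings can contribute to the same entry of $\underline{\lambda}$. I expect this to require an inductive argument along the substrings ordered by length, where at each step one identifies the longest substring that must appear using the local Jordan block structure at vertices near the endpoints of the string, then strips it off and iterates. The genericity, together with condition (i) ruling out off-string interference, should ensure that each step preserves the shape of the problem and that the recovery terminates with the exact isomorphism class of $X$.
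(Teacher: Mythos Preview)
Your overall architecture matches the paper's: necessity by explicit counterexamples, sufficiency by reducing case $(ii)(b)$ to the $A_n$ result of \cite{GPT19} and handling case $(ii)(a)$ by an induction on string length that peels off the longest summands. Your treatment of the failure of $(i)$ is essentially the paper's Proposition~\ref{not1}.

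There is, however, a genuine gap in your sufficiency plan for case $(ii)(a)$, and it is precisely the point you flag as the ``main obstacle.'' You anticipate delicate bookkeeping because ``several substrings can contribute to the same entry of $\underline{\lambda}$,'' and you propose to read off summands from ``the local Jordan block structure at vertices near the endpoints.'' The paper avoids all of this via a lemma you are missing: under $(i)$ and $(ii)(a)$, one has $\mathsf{GenJF}(X) = ((\dim X_q))_{q\in Q_0}$, i.e.\ a \emph{single} Jordan block at every vertex (Proposition~\ref{1*et2aGenJF}). The proof uses condition $(ii)(a)$ to see that every string through $m$ has $m$ as an endpoint, then puts a total order (the Brenner--Schr\"oer order) on $\Sigma_{\mathcal{Q}}(m)$ under which any two such strings admit a nonzero graph map in one direction; this lets one write down an explicit nilpotent endomorphism of $X$ that is a single Jordan block at each vertex. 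Once $\underline{\lambda}$ is just the dimension vector, the generic $Y$ has every $Y_\alpha$ injective or surjective (Lemma~\ref{injsurj}), and your length-induction becomes straightforward: at a vertex $w$ with $\Delta(w)$ maximal, the entire space $Y_w$ belongs to copies of the unique string ending at $w$, and one splits those off directly (Lemma~\ref{decomp}). Without this single-block lemma your inductive step has no clean mechanism for identifying which summand to strip.

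A second, smaller gap: your necessity argument for the failure of $(ii)$ is too vague. The paper needs a structural lemma (Lemma~\ref{stringstruct0}): assuming $(i)$ holds (hence also $(o)$, so $m$ is minuscule) but $(ii)$ fails, one can find a string $\nu$ with $s(\nu)=m$ and three arrows $\alpha,\gamma,\delta$ such that $\delta\nu$, $\gamma^{-1}\nu$, $\nu\alpha^{\varepsilon}$ are all strings and $\delta\gamma\in I$. From these one builds two explicit four-summand representations with identical generic Jordan form data $(3,1)$ along $\nu$ (Proposition~\ref{not2}). Your sketch (``an analogous rank-defect phenomenon on a small explicit gentle subquiver'') does not locate this configuration, and finding it is the actual content. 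Also, your claim that $(ii)(a)$ alone forces $m$ minuscule is not correct; one needs $(i)$ (or at least condition $(o)$) as well --- see Lemma~\ref{1imply0}.
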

	Following the proof given later (See Section \ref{sub:proof1}), we can rephrase the previous theorem in the following way: $\mathscr{C}_{\mathcal{Q},m}$ is canonically Jordan recoverable if and only if the underlying graph of the full subquiver of $Q$ given by the vertices $q$ attained by strings passing through $m$ is a tree $(i)$, and furthermore, if this tree is not a line $(ii)(a)$, either $m$ is a leaf of that tree, or all the incident arrows to $m$ are in relation $(ii)(b)$.
	
	The second one is some kind of consequence of the first, but it needs a few additional works. It allows us to get a characterization of the subcategories $\mathscr{C}_{\mathcal{Q},m}$ that are Jordan recoverable.
	\begin{theorem}\label{2ndmain}
		Let $\mathcal{Q} = (Q,I)$ be a finite connected gentle quiver and $m \in Q_0$. The subcategory $\mathscr{C}_{\mathcal{Q},m}$ is Jordan recoverable if and only if $m$ satisfies the point $(ii)$ of Theorem \ref{main} and the following condition:
		\begin{itemize}
			\item[$(i*)$] Suppose that there exist two strings $\rho$ and $\nu$ passing through $m$, and an arrow $\alpha \in Q_1$ which belongs to neither $\rho$ nor $\nu$, but such that $\rho$ passes through $s(\alpha)$ and $\nu$ passes through $t(\alpha)$. Then there is no string passing through $m$ supported on $\alpha$.
		\end{itemize}
	\end{theorem}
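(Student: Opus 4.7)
My plan hinges on the observation that condition $(i*)$ is strictly weaker than condition $(i)$ of Theorem \ref{main}: it permits a configuration $(\rho,\nu,\alpha)$ violating $(i)$ only when no string through $m$ is supported on $\alpha$. The intuition is that such an arrow $\alpha$ is \emph{invisible} to the subcategory $\mathscr{C}_{\mathcal{Q},m}$: for every $X\in\mathscr{C}_{\mathcal{Q},m}$, each indecomposable summand is a string module whose support avoids $\alpha$, so $X_\alpha=0$. Canonical Jordan recoverability cares about a generic representation in the full variety, which can still activate $\alpha$ and leave the subcategory, but Jordan recoverability asks only for uniqueness \emph{within} $\mathscr{C}_{\mathcal{Q},m}$, so these extraneous representations do not matter.

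For sufficiency, I would formalize this idea by introducing a reduced gentle quiver $\mathcal{Q}^\flat=(Q^\flat,I^\flat)$: keep every vertex, discard every arrow that is not supported on any string through $m$, and restrict $I$ accordingly. The gentle conditions are local at each vertex and are preserved under arrow removal. I would then verify that strings through $m$ in $\mathcal{Q}$ coincide with strings through $m$ in $\mathcal{Q}^\flat$ (the former use only arrows of $Q^\flat$ by definition, and the ideal $I^\flat$ is the restriction of $I$), which yields an equivalence $\mathscr{C}_{\mathcal{Q},m}\simeq\mathscr{C}_{\mathcal{Q}^\flat,m}$ sending $X$ to $X^\flat$ by forgetting the zero maps along removed arrows. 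This equivalence preserves $\mathsf{GenJF}$ because removed arrows contribute trivial commutative-square constraints on every nilpotent endomorphism of a module in $\mathscr{C}_{\mathcal{Q},m}$. Condition $(i*)$ on $\mathcal{Q}$ translates verbatim into condition $(i)$ on $\mathcal{Q}^\flat$ (any arrow of $Q^\flat$ is supported on a string through $m$, so the contrapositive of $(i*)$ rules out a violating pair $\rho,\nu$), and condition $(ii)$ is inherited at $m$. Theorem \ref{main} then provides canonical Jordan recoverability of $\mathscr{C}_{\mathcal{Q}^\flat,m}$, which in particular gives Jordan recoverability, and this transfers back to $\mathscr{C}_{\mathcal{Q},m}$ along the equivalence.

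For necessity I argue contrapositively. If $(ii)$ fails, I would revisit the counterexamples constructed for Theorem \ref{main} in Section \ref{sub:proof1} and check (the key extra verification) that the two non-isomorphic modules sharing a generic Jordan form data both lie inside $\mathscr{C}_{\mathcal{Q},m}$, not merely in $\rep(\mathcal{Q})$; this is plausible since the obstructions to $(ii)$ live locally around $m$ and the constructions naturally use summands whose strings pass through $m$. If $(i*)$ fails, I have strings $\rho,\nu$ through $m$ and an arrow $\alpha\notin\rho\cup\nu$ with $s(\alpha)\in\rho$, $t(\alpha)\in\nu$, together with a distinguished string $\sigma$ through $m$ supported on $\alpha$. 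The presence of $\sigma$ gives me a second module \emph{inside} $\mathscr{C}_{\mathcal{Q},m}$ that realizes the arrow $\alpha$ non-trivially. Concretely, I would build two direct sums of string modules from $\mathscr{C}_{\mathcal{Q},m}$: one based on $M_\rho\oplus M_\nu$ together with auxiliary summands forced by dimension matching, the other based on $M_\sigma$ together with the complementary sub-strings of $\rho$ and $\nu$ cut at $s(\alpha)$ and $t(\alpha)$. A local computation at $\alpha$, modelled on Example \ref{1stGentlex}, shows the two combinations share the same generic Jordan form data while having distinct decompositions into indecomposables.

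The main obstacle is the necessity step under $(i*)$: one must identify exactly which sub-strings of $\sigma$, $\rho$, and $\nu$ produce valid string modules in $\mathcal{Q}$ (respecting gentleness and $I$ at $s(\alpha)$ and $t(\alpha)$), and one must compute the generic Jordan form of both candidate modules rigorously. A secondary obstacle is the sufficiency check that passage to $\mathcal{Q}^\flat$ preserves both the gentle structure and the generic Jordan form data; this is bookkeeping but must be done carefully, particularly since $\mathcal{Q}^\flat$ may fail to be connected and the reduction of the ideal must be verified to be admissible.
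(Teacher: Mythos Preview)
Your sufficiency argument via the reduced quiver $\mathcal{Q}^\flat$ is correct and more conceptual than the paper's route. The paper does not pass through Theorem~\ref{main}; instead, under $(i*)$ and $(ii)(a)$ it proves directly that $\mathsf{GenJF}(X)=((\dim X_q))_{q\in Q_0}$ (Proposition~\ref{1*et2aGenJF}) and concludes by linear independence of the dimension vectors (Corollary~\ref{JFin1*and2}), while under $(i*)$ and $(ii)(b)$ it repeats the $A_n$ reduction of Proposition~\ref{1et2b}. Your reduction buys a single clean appeal to Theorem~\ref{main}; the paper's route buys the explicit formula for $\mathsf{GenJF}$, which is also of independent interest.

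The necessity step under $(i*)$ is where your sketch breaks down. Your proposed second module uses ``the complementary sub-strings of $\rho$ and $\nu$ cut at $s(\alpha)$ and $t(\alpha)$'', but cutting $\rho$ at $s(\alpha)$ produces two pieces, only one of which passes through $m$; the other piece gives a summand outside $\mathscr{C}_{\mathcal{Q},m}$, so the pair you build is not a counterexample to Jordan recoverability of that subcategory. More fundamentally, the raw data $(\rho,\nu,\alpha,\sigma)$ from the negation of $(i*)$ can be arranged in many ways (the strings may wind, revisit vertices, or overlap in uncontrolled fashion), and there is no single local model that handles them all. The paper resolves this by a strict case hierarchy: first show that $m$ not minuscule already kills Jordan recoverability (Lemma~\ref{stringstruct1}, Proposition~\ref{nonminresult}); then, assuming minusculeness, that failure of condition $(o)$ does (Lemma~\ref{stringstruct2}, Proposition~\ref{not0}); and only then, assuming $(o)$, extract a controlled configuration of three strings $\phi,\nu,\mu$ with prescribed relations at both ends (Lemma~\ref{stringstruct3}) from which the two modules $M(\nu'\phi)\oplus M(\mu\phi)$ and $M(\nu\phi)\oplus M(\mu'\phi)$ visibly lie in $\mathscr{C}_{\mathcal{Q},m}$ and share their generic Jordan form. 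Your necessity for $(ii)$ has the same hidden dependence: the paper's counterexamples in Proposition~\ref{not2} rely on Lemma~\ref{stringstruct0}, which already assumes $(o)$, so you cannot simply ``revisit'' them without first disposing of the non-$(o)$ case.
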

	Following the proof given later (See Section \ref{sub:proof2}), we can rephrase the previous theorem in the following way: $\mathscr{C}_{\mathcal{Q},m}$ is Jordan recoverable if and only if the underlying graph of the subquiver of $Q$ given by arrows and  vertices that we go through by strings passing through $m$ is a tree $(i*)$, that satisfies the same property than in the previous result. Note that it is a weaker condition than in Theorem \ref{main}, as cyclic walks are allowed in the full subquiver of $Q$ mentioned previously. If so, then there must be an arrow $\alpha_w$ in each walk $w$ such that $s(\alpha_w) \neq t(\alpha_w)$ and there are no strings passing through $m$ supported at $\alpha_w$.
	
	\section{Background}
	
	\subsection{Gentle algebras and representations}
	
	A \new{quiver} $Q$ is a $4$-tuple of the form $(Q_0,Q_1,s,t)$ where $Q_0$ is \new{the set of vertices}, $Q_1$ is \new{the set of arrows} and $s,t : Q_1 \longrightarrow Q_0$ are respectively \new{source} and \new{target functions}. A quiver is called \new{finite} if $\#Q_0,\#Q_1 < \infty$. Throughout this article we will suppose that all the quivers we will work with are finite and connected. A \new{path} of $Q$ is a finite sequence of arrows $\alpha_1, \alpha_2, \ldots, \alpha_n \in Q_1$ such that $s(\alpha_{i+1}) = t(\alpha_i)$. We will write $p = \alpha_n \ldots \alpha_1$ for a path, and for $q \in Q_0$, $e_q$ for the \new{lazy path} at the vertex $q$, that is the path staying at $q$ without following any arrow. The \new{length} of the path $p$ is the number of arrows in it. 
	
	Let $\mathbb{K}$ be a field. Throughout this article, we will suppose that $\mathbb{K}$ is algebraically closed. This is a restriction we need to use the main result of \cite{GPT19}. They need it because some of their arguments rely on algebraic geometry. The \new{path algebra} denoted $\mathbb{K}Q$ is the $\mathbb{K}$-vector space having as a basis all the paths in the quiver $Q$, together with multiplication given by concatenation of paths. For $l \geqslant 0$, let denote $\mathbb{K}Q_{\geqslant l}$ the ideal of $\mathbb{K}Q$ generated by paths of $Q$ of length at least $l$. An \new{admissible ideal} $I$ of $\mathbb{K}Q$ is an ideal such that: $\exists N > 0/\ \mathbb{K}Q_{\geqslant N} \subseteq I \subseteq \mathbb{K}Q_{\geqslant 2}$.
	\begin{definition} \label{gentledef}
		A \new{gentle algebra} is a quotient algebra $\mathbb{K}Q/I$ where:
		\begin{itemize}
			\item[•] $Q$ is a quiver such that there are at most two incoming arrows and at most two outgoing arrows at each vertex.
			
			\item[•] $I$ is an admissible ideal generated by some paths of length $2$ such that for every arrow $\alpha \in Q_1$ there exists:
			\begin{itemize}
				\item[$*$] at most one arrow $\beta$ such that $s(\alpha) = t(\beta)$ and $\alpha \beta \in I$;
				
				\item[$*$] at most one arrow $\gamma$ such that $s(\gamma) = t(\alpha)$ and $\gamma \alpha \in I$;
				
				\item[$*$] at most one arrow $\beta'$ such that $s(\alpha) = t(\beta')$ and $\alpha \beta' \notin I$;
				
				\item[$*$] at most one arrow $\gamma'$ such that $s(\gamma') = t(\alpha)$ and $\gamma' \alpha \notin I$.
			\end{itemize}
		\end{itemize}
		We will denote a \new{gentle quiver} by $\mathcal{Q} = (Q,I)$ a quiver together with an ideal such that the algebra $\mathbb{K}\mathcal{Q} := \mathbb{K}Q/I$  is gentle.
	\end{definition}
	\begin{ex} \label{gentleex} Here is a gentle quiver. We will use the convention to represent elements generating the admissible ideal by dashed lines in a gentle quiver. They are all the paths of length two that are in $I$.  
		\begin{center}
			$\displaystyle  \xymatrix @R=0.5em @C=0.5em{ 
				& & & & & 3 & & &  2 \ar[ddd]^b & & & \\
				& & & & & & & & & & \\
				& & & & & & & & & & \\
				\mathcal{Q} & = & 4 \ar[rrr]^d & & \ar@{--} @/_/[rd]& 8 \ar[uuu]^c \ar[ddd]^e & \ar@{--} @/_/[lu]  & \ar@{--} @/^/[ru] & 7 \ar[lll]^g \ar[rrr]^a & \ar@{--} @/^/[ld] & & 1 \\
				& & & & & & & & & & & \\
				& & & & & & & & & & \\
				& & & & & 5 & & & 6 \ar[uuu]^f & & &}$
		\end{center}
		In this example $I = \langle af, gb, cg, ed \rangle$ and so $\mathbb{K}\mathcal{Q}$ is the gentle algebra associated to $\mathcal{Q}$ generated as basis by:
		\begin{itemize}
			\item[•] lazy paths: $e_1$, $e_2$, $\ldots$, $e_8$;
			
			\item[•] arrows: $a$, $b$, $\ldots$, $g$; 
			
			\item[•] paths of length $2$: $cd$, $eg$, $gf$, $ab$;
			
			\item[•] paths of length $3$: $egf$. \qedhere
		\end{itemize}
	\end{ex}
	\begin{remark}
		We can note that $\mathbb{K}Q$ (and $\mathbb{K}\mathcal{Q}$) is graded by the length of the paths.
	\end{remark}
	Let $\mathcal{Q} = (Q,I)$ be a gentle quiver. A \new{representation of} $\mathcal{Q}$ (\new{over} $\mathbb{K}$) is a pair $X = ((X_q)_{q \in Q_0}, (X_\alpha)_{\alpha \in Q_1})$ where $X_q$ is a finite-dimensional $\mathbb{K}$-vector space for each $q \in Q_0$ and $X_\alpha : X_{s(\alpha)} \longrightarrow X_{t(\alpha)}$ is a $\mathbb{K}$-linear function  for each $\alpha \in Q_1$ such that if $\alpha \beta \in I$ then $X_\alpha \cdot X_\beta = 0$. We can understand this construction as placing a $\mathbb{K}$-vector space on each vertex $q$ and a $\mathbb{K}$-linear transformation on each arrow $\alpha$ in $\mathcal{Q}$. We write $\underline{\dim}(X) = (\dim(X_q))_{q\in Q_0}$ the \new{dimension vector of} $X$.
	
	A \new{morphism} $\phi$ \new{between two representations} $X$ \new{and} $Y$ is a tuple of $\mathbb{K}$-linear transformations $(\phi_q)_{q \in Q_0}$ where $\phi_q : X_q \longrightarrow Y_q$ for each $q \in Q_0$ such that, for all $\alpha \in Q_1$, $Y_\alpha \phi_{s(\alpha)} = \phi_{t(\alpha)} X_\alpha$. We can have in mind the following commutative square to understand the previous condition.
	\begin{center}
		$\displaystyle \xymatrix@R=1em @C=1em{
			X_{s(\alpha)} \ar[rr]^{X_\alpha} \ar@{.>}[dd]^{\phi_{s(\alpha)}} & & X_{t(\alpha)} \ar@{.>}[dd]^{\phi_{t(\alpha)}} \\
			& \circlearrowright & \\
			Y_{s(\alpha)} \ar[rr]^{Y_{\alpha}} & & Y_{t(\alpha)}
		}$
	\end{center}
	Let $X$ and $Y$ be two representations of $\mathcal{Q}$. Write $X \cong Y$ whenever $X$ and $Y$ are \new{isomorphic}. Denote $X \oplus Y$ the \new{direct sum of representations} $X$ \new{and} $Y$. A representation $M \neq 0$ is called \new{indecomposable} if  $X \cong 0$ or $Y \cong 0$, whenever $M \cong X \oplus Y$.
	
	Let $\mathsf{rep}(\mathcal{Q})$ denote the category of (finite-dimensional) representations of $\mathcal{Q}$.
	\begin{theorem} There is an equivalence of categories between $\mathsf{rep}(\mathcal{Q})$ and the category of modules over $\mathbb{K}\mathcal{Q}$.
	\end{theorem}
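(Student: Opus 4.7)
The plan is to exhibit an explicit pair of functors between $\mathsf{rep}(\mathcal{Q})$ and $\mathsf{Mod}(\mathbb{K}\mathcal{Q})$, and show they are quasi-inverse to one another. In one direction, given a representation $X = ((X_q)_{q \in Q_0}, (X_\alpha)_{\alpha \in Q_1})$, I would define the $\mathbb{K}$-vector space $M = \bigoplus_{q \in Q_0} X_q$ and endow it with a $\mathbb{K}\mathcal{Q}$-module structure by declaring that a lazy path $e_q$ acts as the projection onto the summand $X_q$, that an arrow $\alpha \in Q_1$ acts via $X_\alpha$ from the summand $X_{s(\alpha)}$ into the summand $X_{t(\alpha)}$ and by zero elsewhere, and by extending multiplicatively on products of arrows. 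The relation $X_\alpha X_\beta = 0$ for $\alpha\beta \in I$ ensures that this action factors through the quotient $\mathbb{K}Q/I = \mathbb{K}\mathcal{Q}$, so that $M$ is indeed a $\mathbb{K}\mathcal{Q}$-module. On morphisms $\phi = (\phi_q)_{q \in Q_0}$, I would take the direct sum map $\bigoplus_q \phi_q$; the commutative square condition $Y_\alpha \phi_{s(\alpha)} = \phi_{t(\alpha)} X_\alpha$ is exactly what is needed for $\bigoplus_q \phi_q$ to commute with the action of each arrow.

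Conversely, from a $\mathbb{K}\mathcal{Q}$-module $M$, I would set $X_q := e_q \cdot M$ for each $q \in Q_0$, and for $\alpha \in Q_1$ define $X_\alpha : X_{s(\alpha)} \to X_{t(\alpha)}$ to be left multiplication by $\alpha$. Since $\alpha = e_{t(\alpha)} \alpha e_{s(\alpha)}$ in $\mathbb{K}\mathcal{Q}$, this lands in $X_{t(\alpha)}$ as required. For $\alpha\beta \in I$, one has $\alpha\beta = 0$ in $\mathbb{K}\mathcal{Q}$, so $X_\alpha X_\beta = 0$ in the associated representation. On a module morphism $f : M \to N$, the components $f|_{e_q M} : e_q M \to e_q N$ form the desired tuple of linear maps between representations.

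The main technical verification, and the one I would expect to be the principal obstacle, is that these two constructions are quasi-inverse. Starting from $X$ and applying both functors, the vector space at $q$ of the reconstructed representation is $e_q \cdot \bigoplus_{r \in Q_0} X_r = X_q$ by the idempotency of $e_q$, and the arrow action reproduces $X_\alpha$; this is straightforward. In the other direction, starting from $M$, one uses the fundamental decomposition $M = \bigoplus_{q \in Q_0} e_q M$, which holds because $\sum_{q \in Q_0} e_q = 1$ in $\mathbb{K}\mathcal{Q}$ and the $e_q$ are orthogonal idempotents. One must then check that the reconstructed module action on $\bigoplus_q e_q M$ agrees with the original action of $\mathbb{K}\mathcal{Q}$ on $M$; this reduces to observing that both sides agree on each generator (lazy paths and arrows) of the algebra and extend uniquely. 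Natural transformations in both directions are visibly the identity, giving the equivalence of categories.
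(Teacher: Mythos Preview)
The paper does not give its own proof of this statement; it simply refers the reader to \cite[Theorem 1.6, p.~72]{ASS06}. Your argument is precisely the classical one found there: build a module from a representation by taking the direct sum of the vertex spaces with the path action determined by the arrow maps, and conversely decompose a module via the orthogonal idempotents $e_q$. The verifications you outline are correct. One small point worth making explicit is that $\mathsf{rep}(\mathcal{Q})$ consists of \emph{finite-dimensional} representations by the paper's convention, so the quasi-inverse lands in finite-dimensional (equivalently, finitely generated) $\mathbb{K}\mathcal{Q}$-modules; since $\mathbb{K}\mathcal{Q}$ is itself finite-dimensional this is automatic once you note that $\bigoplus_q X_q$ is finite-dimensional and, conversely, that each $e_q M$ is.
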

	For more details, see for example \cite[Theorem 1.6 p72]{ASS06}.
	
	From now to the end of this subsection, we will recall the combinatorial description of the indecomposable representations of a gentle algebra. With this in mind, we need to introduce some more tools.
	\begin{definition}\label{stringdef}
		A \new{string} of $\mathcal{Q}$ is a finite sequence $\rho = \alpha_k^{\varepsilon_k} \ldots \alpha_1^{\varepsilon_1}$ with $\alpha_i \in Q_1$ for all $i \in \{ 1, \ldots, k \}$ and $\varepsilon_i \in \{\pm 1\}$  such that:
		\begin{itemize}
			\item[•] by defining $s(\alpha^{-1}) = t(\alpha)$ and $t(\alpha^{-1}) = s(\alpha)$ for any $\alpha \in Q_1$, we have, for all $i \in \{ 1,\ldots, k-1 \}$, $s(\alpha_{i+1}^{\varepsilon_{i+1}}) = t(\alpha_{i}^{\varepsilon_{i}})$;
			
			\item[•] if $\varepsilon_i = \varepsilon_{i+1} = 1$, then $\alpha_{i+1} \alpha_i \notin I$; if $\varepsilon_i = \varepsilon_{i+1} = -1$, then $\alpha_{i} \alpha_{i+1} \notin I$;
			
			\item[•] $\rho$ is reduced: for $i \in \{ 1,\ldots , k-1 \}$, if $\varepsilon_i \varepsilon_{i+1} = -1$, then $\alpha_i \neq \alpha_{i+1}$.
		\end{itemize}
	\end{definition}
	In particular, we can notice that a path is a string.
	
	We write $\ell(\rho) := k$ for the \new{length} of the string, $s(\rho):= s(\alpha_1^{\varepsilon_1})$ and $t(\rho) := t(\alpha_k^{\varepsilon_k})$. We denote $\mathsf{Supp}_0(\rho)$ the \new{vertex support} of $\rho$ which is the set of vertices $\{s(\alpha_1^{\varepsilon_1}), t(\alpha_1^{\varepsilon_1}), t(\alpha_2^{\varepsilon_2}), \ldots, t(\alpha_k^{\varepsilon_k})\}$, and $\mathsf{Supp}_1(\rho)$ the \new{arrow support} of $\rho$ which is the set of arrows $\{\alpha_1, \ldots, \alpha_k\}$.
	
	A \new{substring of} $\rho$ is a string of the form $\alpha_j^{\varepsilon_j} \ldots \alpha_i^{\varepsilon_i}$ with $1 \leqslant i \leqslant j \leqslant k$, or a lazy path $e_q$ for $q \in \mathsf{Supp}_0(\rho)$. For all strings $\rho$, we identify $\rho$ and $\rho^{-1}$. Note that this identification is on purpose of Theorem \ref{BR}. A string $\rho$ of $\mathcal{Q}$ is said to be \new{maximal} if there is no arrow $\alpha \in Q_1$ such that any of the following $\alpha \rho$, $\alpha^{-1} \rho$, $\rho \alpha$ or $\rho \alpha^{-1}$ is a string.
	
	We can understand a string as a (reduced) walk in $\mathcal{Q}$ such that we can follow a sequence of arrows (of the form $\alpha \in Q_1$) and inverse arrows (of the form $\alpha^{-1}$ with $\alpha \in Q_1$) avoiding consecutive steps that belong to the ideal $I$ and successive steps $\alpha \alpha^{-1}$ or $\alpha^{-1} \alpha$.
	\begin{ex} \label{stringex} Let us take the following (gentle) quiver $\mathcal{Q}=(Q,I)$.
		\begin{center}
			$\displaystyle \xymatrix@R=1em @C=1em{
				1 \ar@<-0.1ex>[rrdd] \ar[rrdd] \ar@<+0.1ex>[rrdd]^a& & & & & &  5 \ar[lldd]^(.6)d & & & &  \\
				&  \ar@{--} @/_/[dd] & & & & \ar@{--} @/_/ [rr] & & & & & \\
				& & \ar[lldd]^b 2 & \ar@{--} @/^1pc/ [rru] &  4 \ar@<-0.1ex>[ll] \ar[ll] \ar@<+0.1ex>[ll]^c & &  & & 7 \ar@<-0.1ex>[lluu] \ar[lluu] \ar@<+0.1ex>[lluu]_i \ar[rr]^j& \ar@{--} @/^/[lld] & 10  \\
				& & & & & & & & & &  \\
				3 & & & & & &  6 \ar@<-0.1ex>[lluu] \ar[lluu] \ar@<+0.1ex>[lluu]_e \ar@<-0.1ex>[rruu] \ar[rruu] \ar@<+0.1ex>[rruu]^f   & & & &  \\
				& & & & & \ar@{--} @/^/ [uu] & & \ar@{--} @/_/ [uu] &  & &   \\
				& & & & 8 \ar[rruu]_g &  & & &  9 \ar[lluu]^h &  &   \\ }$
		\end{center}
		By following the bold arrows, we read the string $\rho = ife^{-1}c^{-1}a$ or $a^{-1}cef^{-1}i^{-1} = \rho^{-1}$ and we can check that it satisfies the previous conditions of the definition. We have $l(\rho) = 5$, $\mathsf{Supp}_0(\rho) = \{1,2,4,5,6,7\} $ and $\mathsf{Supp}_1(\rho) = \{a,c,e,f,i\}$. The substrings of $\rho$ are:
		\begin{itemize}
			\item[•] lazy paths: $e_q$ for $q \in \mathsf{Supp}_0(\rho)$;
			
			\item[•] arrows: $\mathsf{Supp}_1(\rho)$;
			
			\item[•] strings of length 2: $if, fe^{-1},\ ce,\ a^{-1}c$;
			
			\item[•] strings of length 3: $ife^{-1},\ c e f^{-1},\ a^{-1}ce$;
			
			\item[•] strings of length 4: $ife^{-1}c^{-1},\ f e^{-1} c^{-1} a$;
			
			\item[•] strings of length 5: $\rho$.
		\end{itemize}
		In addition, we can see that $\rho$ is a maximal string of $\mathcal{Q}$: we can neither concatenate any arrow of $\mathcal{Q}$ to $\rho$ at the vertex 1 (as $a$ is the only one arrow incident to $1$) nor at the vertex $5$ (as $di \in I$). 
		
		More visually, we can draw the same string as follows.
		\vspace*{-1cm}
		\begin{center}
			$\displaystyle \xymatrix@R=1em @C=1em{
				& & & 6 \ar[ld]^e \ar[rd]^f & & \\
				1 \ar[rd]^a& & 4 \ar[ld]^c& & 7 \ar[rd]^i & \\
				& 2 & & & & 5 }$ \qquad \begin{tabular}{c c c c c c}
				& & & & &  \\
				& & & & &  \\
				& & & & &  \\
				& & & $6$ & & \\
				$1$ & & $4$ & & $7$ &  \\
				& $2$ & & & & $5$
			\end{tabular}
		\end{center}
		Note that by convention arrows always point downwards.
		
		We can choose not to represent the arrows and only keep the sequence of vertices that the string is passing through if there is no ambiguity about the arrows that we have to keep track of, like in this example.
	\end{ex}
	\begin{definition} \label{stringrepdef}
		A \new{string representation} $M(\rho)$ \new{of} $\mathcal{Q}$ is a representation in $\mathsf{rep}(\mathcal{Q})$ defined as follows:
		\begin{itemize}
			\item[•] Let $v_0 = s(\alpha_1^{\varepsilon_1})$ and for all $ i \in \{ 1, \ldots, k\}$, $v_i = t(\alpha_i^{\varepsilon_i})$;
			
			\item[•] For $q \in Q_0$, $M(\rho)_q$ is the vector space having as basis $\{ x_i \mid v_i = q \}$, where the $x_i$ are formal elements;
			
			\item[•] For $\beta \in Q_1$, $M(\rho)_\beta : M(\rho)_{s(\beta)} \longrightarrow M(\rho)_{t(\beta)}$ is the linear transformation such that: 
			\begin{center}
				$\displaystyle M(\rho)_\beta(x_i) = \left\{ \begin{matrix}
					x_{i-1} & \text{ if } \alpha_i = \beta \text{ and } \varepsilon_i = -1 \hfill\\
					x_{i+1} & \text{ if } \alpha_{i+1} = \beta \text{ and } \varepsilon_{i+1} = 1 \hfill\\
					0 & \text{ otherwise } \hfill
				\end{matrix} \right.$
			\end{center}
		\end{itemize}
	\end{definition}
	\begin{ex} \label{stringrepex} Consider the following gentle quiver.
		\vspace*{0.2cm}
		\begin{center}
			$\displaystyle \xymatrix @R=1em @C=1em{1 \ar@<1ex> [rrr]  \ar@<1.2ex>[rrr] \ar@<1.1ex> [rrr]^a \ar@<-1ex>[rrr]_b & & \ar@{--} @/^1pc/ @<1ex> [rr] \ar@{--} @/_1pc/ @<-1ex>[rr] & 2 \ar@<1.2ex>[rrr] \ar@<1.1ex>[rrr]^c  \ar@<1ex> [rrr] \ar@<-1.2ex>[rrr] \ar@<-1.1ex>[rrr] \ar@<-1ex>[rrr]_d & & & 3}$
		\end{center}
		\vspace*{0.2cm}
		The word $\rho = c^{-1}da$ is a string of $\mathcal{Q}$ and the representation associated to it is:
		\begin{center}
			$\displaystyle \xymatrix @R=1em @C=1em{\mathbb{K} \ar@<1ex> [rrr]^{\begin{tiny}
						\left[ \begin{matrix}
							1 \\
							0
						\end{matrix} \right]
				\end{tiny}} \ar@<-1ex>[rrr]_{\begin{tiny}
						\left[ \begin{matrix}
							0 \\
							0
						\end{matrix} \right]
				\end{tiny}} & & \ar@{--} @/^1pc/ @<1ex> [rr] \ar@{--} @/_1pc/ @<-1ex>[rr] & \mathbb{K}^2 \ar@<1ex> [rrr]^{\begin{tiny} [0 \ 1] \end{tiny}} \ar@<-1ex>[rrr]_{\begin{tiny} [1 \ 0] \end{tiny}} & & & \mathbb{K}}$
		\end{center}
	\end{ex}
	\begin{remark} \label{stringrepconstruct}
		We can understand the construction in a more combinatorial way. To get the string representation $M(\rho)$ associated to a string $\rho$, we draw the string as Example \ref{stringex}, then we can replace each vertex of the string by $x_i$ and finally each arrow defines the direction of the linear map that we will get.
		\begin{center}
			$\displaystyle \xymatrix@R=1em @C=1em {
				1 \ar[rd]^a & & &  \\
				& 2 \ar[rd]^d & & 2 \ar[ld]^c \\
				& & 3 & }$  $\displaystyle \xymatrix@R=1em @C=1em {
				x_1 \ar[rd]^a & & &  \\
				& x_2 \ar[rd]^d & & x_4 \ar[ld]^c \\
				& & x_3 & }$  $\displaystyle \xymatrix @R=1em @C=1em{\langle x_1 \rangle \ar@<1ex> [rrr]^(.4){\begin{tiny}
						{\begin{tiny}
								\begin{matrix}
									x_1 \longmapsto x_2
								\end{matrix}
						\end{tiny} }
				\end{tiny}} \ar@<-1ex>[rrr]_(.4)0 & & \ar@{--} @/^1pc/ @<1ex> [rr] \ar@{--} @/_1pc/ @<-1ex>[rr] & \langle x_2, x_4 \rangle \ar@<1ex> [rrr]^(.6){\begin{tiny}
						\begin{matrix}
							x_2 \longmapsto 0 \hfill \\
							x_4 \longmapsto x_3 \hfill
						\end{matrix}
				\end{tiny} } \ar@<-1ex>[rrr]_(.6){\begin{tiny}
						\begin{matrix}
							x_2 \longmapsto x_3 \hfill \\
							x_4 \longmapsto 0 \hfill
						\end{matrix}
				\end{tiny} } & & & \langle x_3 \rangle \\
				&&& \ccong &&& \\
				\mathbb{K} \ar@<1ex> [rrr]^{\begin{tiny}
						\left[ \begin{matrix}
							1 \\
							0
						\end{matrix} \right]
				\end{tiny}} \ar@<-1ex>[rrr]_0 & & \ar@{--} @/^1pc/ @<1ex> [rr] \ar@{--} @/_1pc/ @<-1ex>[rr] & \mathbb{K}^2 \ar@<1ex> [rrr]^{{\tiny [0 \ 1]}} \ar@<-1ex>[rrr]_{{\tiny [1 \ 0]}} & & & \mathbb{K}}$
		\end{center}
		\vspace*{0.2cm}
		More efficiently, we can prefer to replace each vertex by a copy of the field $\mathbb{K}$ and each arrow by an identity map. 
	\end{remark}
	\begin{definition} \label{banddef}
		A \new{band} of $\mathcal{Q}$ is a non-lazy string $\omega$ such that:
		\begin{itemize}
			\item[•] $s(\omega) = t(\omega)$;
			
			\item[•] for all $i \geqslant 1$, $\omega^i = \underset{ i \text{ times }}{\underbrace{\omega \ldots \omega}}$ is a string of $\mathcal{Q}$;
			
			\item[•] $\omega$ is primitive: $\omega$ is not a power of any shorter substring.
		\end{itemize}
		Two bands of $\mathcal{Q}$, $\omega$ and $\omega'$, are said to be \new{cyclically equivalent}, denoted $\omega \sim_{\text{cyc}} \omega'$ if we can obtain $\omega'$ from $\omega$ by applying a cyclic permutation to the arrows which define $\omega$.
	\end{definition}
	\begin{definition} \label{bandrepdef}
		Let $\omega = \alpha_k^{\varepsilon_k} \ldots \alpha_1^{\varepsilon_1}$ be a band of $\mathcal{Q}$. Let $0 \neq \lambda \in \mathbb{K}$ and an integer $d > 0$. A \new{band representation} $M(\omega, \lambda, d)$ of $\mathcal{Q}$ is a representation defined as follows:
		\begin{itemize}
			\item[•] let $v_0 = s(\alpha_1^{\varepsilon_1})$ and for $i \in \{ 1, \ldots, k-1 \}$, $\displaystyle v_i = t(\alpha_i^{\varepsilon_i})$;
			
			\item[•] for all vertices $q \in Q_0$, $M(\omega, \lambda, d)_q$ is the vector space having as basis $\{ x_i^{(1)}, \ldots, x_i^{(d)} \mid v_i = q \}$ where the $x_i^{(j)}$ are formal elements;
			
			\item[•] for all arrows $\gamma \in Q_1$, $M(\omega, \lambda, d)_\gamma$ is the linear transformation defined as follows:
			\begin{center}
				$\displaystyle M(\omega, \lambda, d)_\gamma(x_i^{(j)}) = \left\{ \begin{matrix}
					x_{i-1}^{(j)} & \text{ if } \gamma = \alpha_i \text{ for } 1 \leqslant i < k,\ 1 \leqslant j \leqslant d \text{ and } \varepsilon_i = -1 \hfill \\
					\lambda^{-1} x_{k-1}^{(j)} +  x_{k-1}^{(j+1)} & \text{ if } \gamma = \alpha_k \text{ for } i= 0,\  1 \leqslant j < d \text{ and } \varepsilon_k = -1 \hfill \\
					\lambda^{-1}  x_{k-1}^{(d)} & \text{ if } \gamma = \alpha_k \text{ for } i= 0,\ j = d \text{ and } \varepsilon_k = -1 \hfill \\
					x_{i+1}^{(j)} & \text{ if } \gamma = \alpha_{i+1} \text{ for } 1 \leqslant i < k-1,\ 1 \leqslant j \leqslant d \text{ and } \varepsilon_i = 1 \hfill \\
					\lambda  x_0^{(j)} +  x_0^{(j+1)} & \text{ if } \gamma = \alpha_k \text{ for } i= k-1,\ 1 \leqslant j < d \text{ and } \varepsilon_k = 1 \hfill \\
					\lambda x_0^{(d)} & \text{ if } \gamma = \alpha_k \text{ for } i= k-1,\ j = d \text{ and } \varepsilon_k = 1 \hfill \\
					0 & \text{ otherwise } \hfill
				\end{matrix} \right.$
			\end{center}
		\end{itemize}
	\end{definition}
	\begin{ex} \label{bandex} Consider the following gentle quiver.
		\vspace*{0.2cm}
		\begin{center}
			$\displaystyle \xymatrix @R=1em @C=1em{1 \ar@<1ex> [rrr]^a \ar@<-1ex>[rrr]_b & & \ar@{--} @/^1pc/ @<1ex> [rr] \ar@{--} @/_1pc/ @<-1ex>[rr] & 2 \ar@<1ex> [rrr]^c \ar@<-1ex>[rrr]_d & & & 3}$
		\end{center}
		\vspace*{0.2cm}
		The string $\rho = b^{-1}c^{-1}da$ is a band of $\mathcal{Q}$. For $0 \neq \lambda \in \mathbb{K}$ and $d > 0$, the representation $M(\rho,\lambda,d)$ is isomorphic to the following representation:
		\begin{center}
			$\displaystyle \xymatrix @R=1em @C=1em{\mathbb{K}^d \ar@<1ex> [rrr]^{\begin{tiny}
						\left[ \begin{matrix}
							I_d \\
							0
						\end{matrix} \right]
				\end{tiny}} \ar@<-1ex>[rrr]_{\begin{tiny}
						\left[ \begin{matrix}
							0 \\
							J_d(\lambda^{-1})
						\end{matrix} \right]
				\end{tiny}} & & \ar@{--} @/^1pc/ @<1ex> [rr] \ar@{--} @/_1pc/ @<-1ex>[rr] & \mathbb{K}^{2d} \ar@<1ex> [rrr]^{{\tiny [0 \ I_d]}} \ar@<-1ex>[rrr]_{{\tiny [I_d \ 0]}} & & & \mathbb{K}^d}$
		\end{center}
		where $J_d(\lambda^{-1})$ is the following Jordan block. \qedhere
		\begin{center}
			$\displaystyle \left( \begin{matrix}
				\lambda^{-1} & & &\\
				1 & \ddots & & \\
				& \ddots & \ddots & \\
				& & 1 & \lambda^{-1}
			\end{matrix} \right)$ 
		\end{center}
	\end{ex}
	\begin{remark} \label{bandrepcombconstruct}
		Like string representations, we can understand the construction of a band representation by replacing each vertex of the band with a copy of $\mathbb{K}^d$ and each arrow with an identity map or a linear transformation associated to a Jordan block $J_d(\lambda)$ or $J_d(\lambda^{-1})$. However, we identify the first and the last vector spaces.
		\begin{center}
			$\displaystyle \xymatrix@R=1em @C=1em {
				1 \ar[rd]^a & & & & 1 \ar[ld]^b \\
				& 2 \ar[rd]^d & & 2 \ar[ld]^c & \\
				& & 3 &  &}$ \hspace*{2cm}  $\displaystyle \xymatrix@R=1em @C=1em {
				\mathbb{K}^d \ar[rd]^{I_d} & & &  &  \mathbb{K}^d \ar[ld]^{J_d(\lambda^{-1})} \\
				& \mathbb{K}^d \ar[rd]^{I_d} & & \mathbb{K}^d  \ar[ld]^{I_d}  &\\
				& & \mathbb{K}^d & & }$ 
		\end{center}
	\end{remark}
	Now we can state the following theorem that gives us the classification of all indecomposable representations of a gentle quiver.
	\begin{theorem}[\cite{BR87}]  \label{BR} 
		Let $\mathbb{K}$ be an algebraically closed field and $\mathcal{Q} = (Q,I)$ a gentle quiver. Any indecomposable representation of $\mathcal{Q}$ is a string or a band representation of $\mathcal{Q}$. Moreover :
		\begin{itemize}
			\item[•] A string representation is not isomorphic to a band representation ;
			
			\item[•] $M(\rho)$ is isomorphic to $M(\rho')$ if and only if $\rho' = \rho^{\pm 1}$ ;
			
			\item[•] $M(\omega,\lambda,d)$ is isomorphic to $M(\omega', \lambda', d')$ if and only if $d = d'$ and either $ \omega \sim_{\text{cyc}} \omega'$ and $\lambda=\lambda'$ or $\omega^{-1} \sim_{\text{cyc}} \omega'$ and $\lambda^{-1} = \lambda'$.	
		\end{itemize}
	\end{theorem}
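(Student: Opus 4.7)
The plan is to follow the classical route of Butler--Ringel, as streamlined later by Crawley-Boevey, via \emph{functorial filtrations}. There are three things to establish: $(i)$ every string and band representation is indecomposable; $(ii)$ every indecomposable in $\rep(\mathcal{Q})$ is isomorphic to one of these; $(iii)$ the isomorphism criteria among string and band representations, together with the fact that a string representation is never isomorphic to a band representation.

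\textbf{Indecomposability and distinguishing strings from bands.} For $M(\rho)$, I would compute $\End_{\mathbb{K}\mathcal{Q}}(M(\rho))$ directly using the basis $\{x_i\}$. An endomorphism must commute with every $M(\rho)_\beta$, and the gentle axioms force it to be a scalar plus a nilpotent part factoring through substrings that appear both as subrepresentations and as quotients of $M(\rho)$; this yields a local ring. For $M(\omega,\lambda,d)$, using that $\mathbb{K}$ is algebraically closed one diagonalizes the cyclic part to identify $\End(M(\omega,\lambda,d)) \cong \mathbb{K}[T]/(T^d)$, which is local, so $M(\omega,\lambda,d)$ is indecomposable. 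The generic endomorphism of a band module is a nontrivial nilpotent whose Jordan type recovers $d$, while the generic endomorphism of a string module is a scalar; this separates the two families.

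\textbf{Exhaustion --- the main obstacle.} Given an indecomposable $X$, the goal is to peel off a string or band summand; indecomposability then forces $X$ itself to be that summand. For each arrow $\alpha$ the gentle condition designates (up to) one ``allowed successor'' and one ``allowed predecessor'' on each side, and hence finitely many ``directions'' at each vertex. Following Crawley-Boevey, I would define, on $\rep(\mathcal{Q})$, functors that attach to $X_q$ the subspace obtained by pushing as far as possible in a given direction, and dually the quotient obtained by the dual construction. The hard work is to check that the gentle axioms (at most two incoming and two outgoing arrows at each vertex, at most one forming a relation with any fixed arrow) are exactly what is needed for these functors to be exact and for the resulting filtrations to be compatible across arrows. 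Refining simultaneously by all such filtrations produces a distinguished basis of $X$ on which $\mathbb{K}\mathcal{Q}$ acts by either ``zigzag'' transitions indexed by a string, or ``cyclic'' transitions indexed by a band; in the latter case one additional application of Jordan normal form (using $\mathbb{K}$ algebraically closed) extracts the parameters $(\lambda,d)$.

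\textbf{Isomorphism criteria.} Given $M(\rho) \cong M(\rho')$, comparing the filtrations by top and socle composition factors read off from the sequence of vertices and arrows of $\rho$ forces $\rho' = \rho^{\pm 1}$, the symmetry reflecting that $\rho$ and $\rho^{-1}$ produce the same representation by Definition~\ref{stringrepdef}. For bands, cyclic equivalence $\omega \sim_{\mathrm{cyc}} \omega'$ corresponds to rotating the distinguished basis of the cycle, the symmetry $\omega \mapsto \omega^{-1}$, $\lambda \mapsto \lambda^{-1}$ arises from traversing the cycle in the opposite direction (so that the Jordan block $J_d(\lambda)$ is replaced by $J_d(\lambda^{-1})$), and the invariants $d$ and $\lambda$ (up to this inversion) are recovered as the Jordan data of the distinguished nilpotent endomorphism identified above. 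The whole argument pivots on the functorial filtration construction, which is where I expect the main difficulty to lie.
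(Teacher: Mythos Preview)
The paper does not prove this theorem at all: it is stated as a background result and attributed to Butler--Ringel \cite{BR87}, with no proof or sketch given. So there is nothing to compare your proposal against within the paper itself.

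That said, your outline is the standard modern route (functorial filtrations \`a la Crawley-Boevey), and the shape is correct. A few cautionary remarks. First, your claim that ``the generic endomorphism of a string module is a scalar'' is not quite right: a string module $M(\rho)$ can have nontrivial nilpotent endomorphisms coming from substrings that are simultaneously on top and at the bottom of $\rho$ (this is exactly what the paper exploits later via Proposition~\ref{morph}); what is true is that $\End(M(\rho))$ is local with residue field $\mathbb{K}$, which suffices for indecomposability. Second, distinguishing strings from bands by ``the generic nilpotent endomorphism'' is delicate for the same reason; the cleaner invariant is that a band module $M(\omega,\lambda,d)$ has an \emph{invertible} endomorphism with eigenvalue $\lambda$ coming from the monodromy, whereas every nonscalar endomorphism of a string module is nilpotent. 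Third, in the exhaustion step you should be explicit that for bands the last step really does require $\mathbb{K}$ algebraically closed (or at least that indecomposable $\mathbb{K}[T,T^{-1}]$-modules are Jordan blocks), since over a general field the band parameters are indexed by irreducible polynomials rather than scalars. None of these are fatal, but they are the places where a referee would push back.
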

	Remark that M.C.R. Butler and C.M. Ringel provide a classification of indecomposable modules over an arbitrary field. However, for simplicity, we restrict to the case that the field is algebraically closed which is the relevant case for us.
	
	Moreover, we also have a good understanding of the morphisms between two string representations of $\mathcal{Q}$. We just need the following notions.
	\begin{definition}\label{bottopdef}
		Let $\rho = \alpha_k^{\varepsilon_k} \ldots \alpha_1^{\varepsilon_1}$ a string of $\mathcal{Q}$. A substring $\sigma = \alpha_j^{\varepsilon_j} \ldots \alpha_i^{\varepsilon_i}$ of $\rho$ is said to be \new{on the top of} $\rho$ if $i =1$ or $\varepsilon_{i-1} = -1$, and $j=k$ or $\varepsilon_{j+1} = 1$. Analogically $\sigma$ is said to be \new{at the bottom of} $\rho$ if $i=1$ or $\varepsilon_{i-1} = 1$, and $j=k$ or $\varepsilon_{j+1} = -1$. 
	\end{definition}
	We can more easily understand these definitions thanks to the visual representation (mentioned in Example \ref{stringex}) of the string $\rho$.
	\begin{center}
		\scalebox{0.9}{\begin{tikzpicture}[yshift = 1cm, xshift = 5cm, ->,line width=0.2mm,>= angle 60,color=black, scale=0.6]
				\node (rho) at (-1,.5){$\rho =$};
				\node (1) at (0,0){$\bullet$};
				\node (2) at (2,0){$\bullet$};
				\node (3) at (3,1){$\bullet$};
				\node (4) at (6,1){$\bullet$};
				\node (5) at (7,0){$\bullet$};
				\node (6) at (9,0){$\bullet$};
				\draw[-,decorate, decoration={snake,amplitude=.4mm}] (1) -- (2);
				\draw (3) -- node[above left]{$\alpha_{i-1}$} (2);
				\draw[-,line width=0.7mm,decorate, decoration={snake,amplitude=.4mm}] (3) -- node[above]{$\pmb \sigma$} (4);
				\draw (4) -- node[above right]{$\alpha_{j+1}$} (5);
				\draw[-,decorate, decoration={snake,amplitude=.4mm}]  (5) -- (6);
				\node at (-.3,0.5){$\left(\vphantom{\begin{matrix}
							\\
							\\
							\\
					\end{matrix}}\right.$};
				\node at (2.8,0.5){$\left.\vphantom{\begin{matrix}
							\\
							\\
							\\
					\end{matrix}}\right)$};
				\node at (6.2,0.5){$\left(\vphantom{\begin{matrix}
							\\
							\\
							\\
					\end{matrix}}\right.$};
				\node at (9.3,0.5){$\left.\vphantom{\begin{matrix}
							\\
							\\
							\\
					\end{matrix}}\right)$};
				
				\begin{scope}[xshift = 12cm,yshift=1cm]
					\node (rho) at (-1,-.5){$\rho =$};
					\node (1) at (0,0){$\bullet$};
					\node (2) at (2,0){$\bullet$};
					\node (3) at (3,-1){$\bullet$};
					\node (4) at (6,-1){$\bullet$};
					\node (5) at (7,0){$\bullet$};
					\node (6) at (9,0){$\bullet$};
					\draw[-,decorate, decoration={snake,amplitude=.4mm}] (1) --  (2);
					\draw (2) -- node[below left]{$\alpha_{i-1}$} (3);
					\draw[-,line width=0.7mm,decorate, decoration={snake,amplitude=.4mm}] (3) -- node[below]{$\pmb \sigma$} (4);
					\draw (5) -- node[below right]{$\alpha_{j+1}$} (4);
					\draw[-,decorate, decoration={snake,amplitude=.4mm}]  (5) -- (6);
					\node at (-.3,-0.5){$\left(\vphantom{\begin{matrix}
								\\
								\\
								\\
						\end{matrix}}\right.$};
					\node at (2.8,-0.5){$\left.\vphantom{\begin{matrix}
								\\
								\\
								\\
						\end{matrix}}\right)$};
					\node at (6.2,-0.5){$\left(\vphantom{\begin{matrix}
								\\
								\\
								\\
						\end{matrix}}\right.$};
					\node at (9.3,-0.5){$\left.\vphantom{\begin{matrix}
								\\
								\\
								\\
						\end{matrix}}\right)$};
				\end{scope}
		\end{tikzpicture}}
	\end{center}
	On the left, we have a representation of a substring $\sigma$ on the top of $\rho$, and on the right a representation of a substring $\sigma$ at the bottom of $\rho$. Parts in parentheses are optional.
	\begin{prop}[\cite{CB89}] \label{morph}
		Let $\rho$ and $\rho'$ be two strings of $\mathcal{Q}$. Then \[\Hom(M(\rho),M(\rho')) \cong \mathbb{K}^{\#[\rho, \rho']}\] where $[\rho, \rho']$ is the set of couples $(\sigma, \sigma')$ such that the substring $\sigma$ is on the top of $\rho$, the substring $\sigma'$ is at the bottom of $\rho'$ and $\sigma' = \sigma^{\pm 1}$.
	\end{prop}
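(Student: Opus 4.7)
The plan is to exhibit an explicit basis of $\Hom(M(\rho), M(\rho'))$ indexed by the set $[\rho, \rho']$.

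First I would construct, for each pair $(\sigma, \sigma') \in [\rho, \rho']$, a canonical morphism $\phi_{\sigma, \sigma'} : M(\rho) \to M(\rho')$. The hypothesis that $\sigma$ is on the top of $\rho$ means the basis vectors corresponding to vertices of $\sigma$ span a \emph{quotient} string module $M(\sigma)$ of $M(\rho)$: the complementary basis vectors form a subrepresentation because any arrow of $\rho$ incident to an endpoint of $\sigma$ but not in $\sigma$ points \emph{into} $\sigma$ (by the definition of \textquotedblleft top\textquotedblright). Dually, $\sigma'$ at the bottom of $\rho'$ yields an inclusion $M(\sigma') \hookrightarrow M(\rho')$. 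The identification $\sigma' = \sigma^{\pm 1}$ gives an isomorphism $M(\sigma) \cong M(\sigma')$, and I define $\phi_{\sigma, \sigma'}$ as the composition
\[
\phi_{\sigma, \sigma'} \; : \; M(\rho) \twoheadrightarrow M(\sigma) \xrightarrow{\sim} M(\sigma') \hookrightarrow M(\rho').
\]
I would then check that $\phi_{\sigma, \sigma'}$ is indeed a morphism of representations by verifying the commutative squares arrow by arrow, using Remark \ref{stringrepconstruct} so that each $(\phi_{\sigma,\sigma'})_q$ is just a sum of transfers $x_i \mapsto x'_{i'}$ for matched basis vectors.

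Next, linear independence: for $(\sigma,\sigma') \in [\rho,\rho']$, let $i_0$ be the smallest index of a basis vector $x_{i_0}$ of $M(\rho)$ supported at a vertex of $\sigma$. Then $\phi_{\sigma,\sigma'}(x_{i_0})$ has nonzero coefficient on the corresponding basis vector $x'_{i'_0}$ of $M(\rho')$, while $\phi_{\tau,\tau'}(x_{i_0}) = 0$ for any other pair sharing the same starting vertex, by the top/bottom condition. Ordering the pairs by the positions of their endpoints in $\rho$ and $\rho'$ produces a triangular system, establishing independence.

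The main obstacle is showing that the $\phi_{\sigma,\sigma'}$ span $\Hom(M(\rho), M(\rho'))$. Given $\phi : M(\rho) \to M(\rho')$, at each vertex $q$ write $\phi_q$ as a matrix with entries $c_{i,j}$ indexed by the basis vectors $x_i$ (in $M(\rho)$) and $x'_j$ (in $M(\rho')$) with $v_i = v'_j = q$. Each arrow of $\mathcal{Q}$ produces a commutative square relating entries at its source and target; unwinding this, a nonzero $c_{i,j}$ forces nonzero transfers along any arrow of $\rho$ pointing \emph{into} position $i$ and any arrow of $\rho'$ pointing \emph{out of} position $j$, and forces $0$ entries along the arrows pointing the other way. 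Propagating these constraints in both directions from $(i,j)$ carves out a substring $\sigma$ of $\rho$ and a substring $\sigma'$ of $\rho'$; the forced boundary conditions are exactly that $\sigma$ lies on the top of $\rho$, $\sigma'$ lies at the bottom of $\rho'$, and the induced matching of basis vectors amounts to an isomorphism $M(\sigma) \cong M(\sigma')$, i.e. $\sigma' = \sigma^{\pm 1}$. The contribution is therefore $c_{i,j} \phi_{\sigma,\sigma'}$; subtracting and iterating on the remaining nonzero entries (which are fewer), one decomposes $\phi$ as a finite $\mathbb{K}$-linear combination of $\phi_{\sigma,\sigma'}$'s. Combined with independence this gives the claimed isomorphism.
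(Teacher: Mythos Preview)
The paper does not prove this proposition: it is quoted from \cite{CB89} and immediately followed by an example, with no argument given. So there is nothing to compare your proposal against in the paper itself.

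That said, your outline is the standard graph-map argument and is essentially correct. Two places deserve a little more care. First, in the linear-independence step, distinct pairs $(\sigma,\sigma')$ and $(\tau,\tau')$ can share the same starting index $i_0$ in $\rho$ (for instance when $\sigma$ and $\tau$ begin at the same position but are matched to different positions in $\rho'$); the triangularity should therefore be set up with respect to the pair of starting positions $(i_0,i_0')$ in $\rho$ and $\rho'$, not just $i_0$. Second, in the spanning step, the claim that subtracting $c_{i,j}\phi_{\sigma,\sigma'}$ leaves strictly fewer nonzero entries relies on the fact that the commutative squares force all entries along the matched substring to equal $c_{i,j}$ exactly, so they all vanish simultaneously after subtraction; this is true, but you should say explicitly that the propagation gives equalities $c_{i,j}=c_{i\pm1,j\pm1}$ (not just nonvanishing), and that at the boundary of the propagation the top/bottom conditions are forced because otherwise a commutative square would equate $c_{i,j}$ to an entry that must be zero. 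With those two clarifications the argument goes through and recovers Crawley--Boevey's basis of graph maps.
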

	\begin{ex} \label{morphex} Consider the following gentle quiver.	
		\begin{center}
			$\displaystyle \xymatrix@R=0.5em @C=0.5em{ \mathcal{Q} & = & 1 \ar[rrr]^a & & \ar@{--} @/^0.8pc/[rr] & 2 \ar[rrr]^b & & & 3 & & & 4 \ar[lll]_c}$
		\end{center}
		Let $X$ and $Y$ be the following representations.
		\begin{center}
			$\displaystyle X = \xymatrix@R=0.5em @C=0.5em{   \mathbb{K}^2 \ar[rrr]^{{\tiny[ 1 \ 0]}} & & \ar@{--} @/^0.8pc/[rr] & \mathbb{K} \ar[rrr]^{\begin{tiny}
						\left[ \begin{matrix}
							0 \\
							0
						\end{matrix} \right]
				\end{tiny}} & & & \mathbb{K}^2 & & & \mathbb{K}^2 \ar[lll]_{\begin{tiny}
						\left[ \begin{matrix}
							1 & 0\\
							0 & 1 
						\end{matrix} \right]
			\end{tiny}} } $
			
			$\displaystyle   Y = \xymatrix@R=0.5em @C=0.5em{   \mathbb{K} \ar[rrr]^{\begin{tiny}
						\left[ \begin{matrix}
							1 \\
							0
						\end{matrix} \right]
				\end{tiny}} & & \ar@{--} @/^0.8pc/[rr] & \mathbb{K}^2 \ar[rrr]^{{\tiny[ 0 \ 1]}} & & & \mathbb{K} & & & 0 \ar[lll] } $
		\end{center}
		We can decompose them as a sum of indecomposable representations using string representations of $\mathcal{Q}$.
		\begin{center}
			$\displaystyle X \cong M(e_1) \oplus M(a) \oplus M(c)^2 $ \qquad  $\displaystyle   Y \cong M(a) \oplus M(b) $
		\end{center}
		We describe $\mathsf{Hom}(X,Y)$ by:
		\begin{itemize}
			\item[•] $\mathsf{Hom}(M(e_1), M(a)) = 0$ because $e_1$ is the only substring of $e_1$ (as $e_1$ is lazy) and $e_1$ is not at the bottom of $a$;
			
			\item[•] $\mathsf{Hom}(M(e_1), M(b)) = 0$ because $e_1$ is not a substring of $b$;
			
			\item[•] $\mathsf{Hom}(M(a),M(a)) \cong \mathbb{K}a$ as $a$ is the only substring of $a$ which is both on the top and at the bottom of $a$;
			
			\item[•] $\mathsf{Hom}(M(a), M(b)) = 0$ because the only substring of both $a$ and $b$, $e_2$, is neither on the top of $a$ nor at the bottom of $b$;
			
			\item[•] $\mathsf{Hom}(M(c), M(a)) = 0$ because there is no substring of both $c$ and $a$;
			
			\item[•] $\mathsf{Hom}(M(c),M(b)) = 0$ because the only substring of both $b$ and $c$, $e_3$, is not on the top of $c$.
		\end{itemize}
		Thus $\mathsf{Hom}(X, Y) \cong \mathbb{K}$ and the only type of morphisms in this set is of the form
		\begin{center}
			$\displaystyle \xymatrix@R=0.5em @C=0.5em{  & & \mathbb{K}^2 \ar@{.>}[ddd]^{{\tiny[ k \ 0]}} \ar[rrr]^{\begin{tiny}
						[1 \ 0]
				\end{tiny}}   & & \ar@{--} @/^0.8pc/[rr] & \mathbb{K} \ar@{.>}[ddd]^{{\tiny \left[ \begin{matrix}
							k \\
							0
						\end{matrix} \right] }} \ar[rrr]^{\begin{tiny}
						\left[ \begin{matrix}
							0 \\
							0  
						\end{matrix} \right]
				\end{tiny}}  & & & \mathbb{K}^2 \ar@{.>}[ddd]^{{\tiny 0 }} & & & \mathbb{K}^2 \ar[lll]_{\begin{tiny}
						\left[ \begin{matrix}
							1 & 0\\
							0 & 1 
						\end{matrix} \right]
				\end{tiny}} \ar@{.>}[ddd]^{{\tiny 0 }} & =X  \\
				\phi = & & & &  &  & & & & & & &  \\
				& &   & &  &  & & & & & & & \\
				& &  \mathbb{K} \ar[rrr]_{\begin{tiny}
						\left[ \begin{matrix}
							1 \\
							0
						\end{matrix} \right]
				\end{tiny}} & & \ar@{--} @/_0.8pc/[rr] & \mathbb{K}^2 \ar[rrr]_{{\tiny[ 0 \ 1]}} & & & \mathbb{K} & & & 0 \ar[lll] & = Y}$
		\end{center}
		with $k \in \mathbb{K}$.
	\end{ex}
	These kinds of morphisms between representations are called \new{graph maps}. They provide a basis for the morphisms between string representations. However, morphisms exist between band representations that are not linear combinations of graph maps. We do not need to go into more precise statements to describe all the morphisms between band representations in this article, but if the reader is interested, we can refer to \cite{K88}. 
	
	\subsection{Jordan recoverability and canonical Jordan recoverability} Let $\mathcal{Q}=(Q,I)$ be a gentle quiver and $X \in \mathsf{rep}(\mathcal{Q})$.  A \new{nilpotent endomorphism} $N : X \longmapsto X$ is an endomorphism such that $N^k = 0$ for some integer $k > 0$. We can remark that $N$ is nilpotent if and only if $N_q$ is a nilpotent linear morphism for all $q \in Q_0$. Let $\mathsf{NEnd}(X)$ denote the set of nilpotent endomorphisms of $X$.
	
	Let $n > 0$ be an integer. An \new{integer partition of} $n$ is a finite weakly decreasing sequence $\lambda = (\lambda_1, \lambda_2, \ldots, \lambda_k)$ of strictly positive integers such that $\lambda_1 + \cdots + \lambda_k = n$. The \new{size} of an integer partition $\lambda$ is $|\lambda| = \lambda_1 + \cdots + \lambda_k$. We write $\lambda \vdash n$ when $\lambda$ is a partition of $n$. 
	
	Let $\underline{\dim}(X) = (d_q)_{q \in Q_0}$. For any $N \in \mathsf{NEnd}(X)$, we can consider the Jordan form of $N_q$ at each vertex $q$. It induces a sequence of partitions $\lambda^q \vdash d_q$. Hence $(\lambda^q)_{q\in Q_0}$ can be considered as the \new{Jordan form data of} $N$. We will denote it $\mathsf{JF}(N)$. 
	
	The \new{dominance order} over partitions of an integer $n$ is defined by: for any $\lambda$ and $\mu$ partitions of $n$, $\lambda \unlhd \mu$ if $\lambda_1 + \ldots + \lambda_k \leqslant \mu_1 + \ldots + \mu_k$ for each $k \geqslant 1$ where we add zero parts to $\lambda$ and $\mu$ if necessary. We extend this order to any $p$-tuple of partitions as follows. For $\underline{\lambda} = (\lambda^1, \ldots, \lambda^p)$ and $\underline{\mu} = (\mu^1, \ldots, \mu^p)$ such that $\lambda^i$ and $\mu^i$ are partitions of $m_i$, we say that $\underline{\lambda} \unlhd \underline{\mu}$ for all $i \in \{1,\ldots,p\}$ $\lambda^i \unlhd \mu^i$.
	
	A. Garver, R. Patrias, and H. Thomas proved the following statement.
	\begin{theorem}[\cite{GPT19}] \label{GPT3}
		Let $A$ be a finite-dimensional algebra over an algebraically closed field $\mathbb{K}$ and $X$ be a finite-dimensional (left) module over $A$. Then the set of nilpotent endomorphisms of $X$, $\mathsf{NEnd}(X)$, is an irreducible algebraic variety. Furthermore, there is a maximum value of $\mathsf{JF}$ on $\mathsf{NEnd}(X)$ and it is attained on a dense open set of $\mathsf{NEnd}(X)$
	\end{theorem}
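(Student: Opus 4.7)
The plan is to handle the two assertions separately: irreducibility of $\mathsf{NEnd}(X)$, and existence of a maximum of $\mathsf{JF}$ on a dense open subset. The key structural observation is that $A := \End(X)$ is a finite-dimensional $\mathbb{K}$-algebra, $\mathsf{NEnd}(X)$ sits inside the vector space $A$ as the set of nilpotent elements, and its relation to the semisimple quotient $A/\rad(A)$ is what makes both assertions work.

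For irreducibility, I would first decompose $X = \bigoplus_i X_i^{n_i}$ into indecomposables. Because $\mathbb{K}$ is algebraically closed, each $\End(X_i)$ is local with residue field $\mathbb{K}$, and hence $A / \rad(A) \cong \prod_i \M_{n_i}(\mathbb{K})$. The Jacobson radical $J = \rad(A)$ is a nilpotent two-sided ideal, and a short argument shows that $N \in A$ is nilpotent if and only if its image $\overline{N} \in A/J$ is nilpotent: one direction is immediate, and conversely if $\overline{N}^{\,k} = 0$ then $N^k \in J$, so $N^{kr} = 0$ for $r$ with $J^r = 0$. Consequently, writing $\pi : A \to A/J$, we have $\mathsf{NEnd}(X) = \pi^{-1}(\mathsf{Nil}(A/J))$. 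The nilpotent cone $\mathsf{Nil}(\M_n(\mathbb{K}))$ is a classical irreducible variety (the closure of the orbit of a regular nilpotent matrix), so $\mathsf{Nil}(A/J) = \prod_i \mathsf{Nil}(\M_{n_i}(\mathbb{K}))$ is irreducible. The map $\pi$ is a surjective $\mathbb{K}$-linear map with fibers being translates of the vector space $J$, so $\mathsf{NEnd}(X)$ is a trivial affine bundle over an irreducible base, and therefore irreducible.

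For the maximum of $\mathsf{JF}$, I would use the standard identification of the dominance order in terms of ranks. For a nilpotent endomorphism of a vector space of dimension $d$, the Jordan type $\lambda \vdash d$ satisfies
\[ \operatorname{rank}(N^k) = d - (\lambda'_1 + \cdots + \lambda'_k), \]
so the conjugate partition controls the ranks, and the dominance order on $\lambda$ corresponds to the opposite dominance order on $\lambda'$, which in turn is equivalent to the coordinatewise order on $(\operatorname{rank}(N^k))_k$. Now, for each vertex $q \in Q_0$ and each $k \geqslant 1$, the function $N \mapsto \operatorname{rank}(N_q^k)$ is lower-semicontinuous on $\mathsf{NEnd}(X)$ — equivalently, $\{N : \operatorname{rank}(N_q^k) \geqslant r\}$ is a Zariski open subset of $\mathsf{NEnd}(X)$. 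Let $r_{q,k}^{\max}$ be the maximum value attained; the open set $U_{q,k}$ where this maximum is achieved is nonempty by definition. Taking the intersection of these finitely many nonempty open sets (indexed by $q$ and $1 \leqslant k \leqslant \dim X_q$) and invoking irreducibility, one obtains a nonempty, hence dense, open subset $U \subseteq \mathsf{NEnd}(X)$ on which every $\operatorname{rank}(N_q^k)$ simultaneously attains its maximum. On $U$, the Jordan form data $\mathsf{JF}(N)$ is constant and equal to the unique maximum in the coordinatewise dominance order.

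The main obstacle I would expect is the irreducibility step: one must correctly identify nilpotents in $A$ with nilpotents in $A/J$, and then invoke (or at least recall a clean argument for) the irreducibility of the classical nilpotent cone of $\M_n(\mathbb{K})$ — the rest of the argument, including the dense-open-set conclusion, is largely formal once irreducibility is in hand. The assumption that $\mathbb{K}$ is algebraically closed is crucial at precisely the point where we claim $\End(X_i)/\rad \cong \mathbb{K}$ for the indecomposables $X_i$; without it, the semisimple quotient can involve division algebras and the identification with a product of matrix algebras over $\mathbb{K}$ fails.
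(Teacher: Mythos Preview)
Your argument is correct, but there is nothing in the present paper to compare it against: Theorem~\ref{GPT3} is quoted from \cite{GPT19} and is not proved here; the paper simply refers the reader to \cite[Section~2.2]{GPT19} for details. So your write-up is a self-contained proof of a result that the paper treats as a black box.

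On the substance, your two steps are the standard ones and are carried out cleanly. The identification $\mathsf{NEnd}(X)=\pi^{-1}\bigl(\mathrm{Nil}(A/J)\bigr)$ via the quotient $\pi:A\to A/J$ with $J=\rad(A)$ is exactly the right reduction, and your observation that a linear splitting of $\pi$ exhibits $\mathsf{NEnd}(X)\cong \mathrm{Nil}(A/J)\times J$ as varieties makes the irreducibility transparent once one knows that each factor $\mathrm{Nil}(\M_{n_i}(\mathbb{K}))$ is irreducible. Your use of the algebraically closed hypothesis is pinpointed correctly: it is precisely what forces $A/J\cong\prod_i\M_{n_i}(\mathbb{K})$ rather than a product of matrix rings over division algebras. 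The second half, translating the dominance order into simultaneous rank conditions and intersecting the finitely many open loci where each $\mathrm{rank}(N_q^k)$ is maximal, is the expected semicontinuity argument and needs irreducibility only to guarantee that a finite intersection of nonempty opens is nonempty.
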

	As we work with tools that are satisfying exactly the hypotheses of the previous theorem, we can define $\mathsf{GenJF}(X)$ the \new{generic Jordan form data of} $X$ as this maximal value of $\mathsf{JF}$ on $\mathsf{NEnd}(X)$.
	
	For more details about this result, see \cite[Section 2.2]{GPT19}.
	\begin{definition}\label{JRdef}
		A subcategory $\mathscr{C}$ of $\mathsf{rep}(\mathcal{Q})$ is called \new{Jordan recoverable} if, for any tuple of partitions $\underline{\lambda}$, there is at most a unique (up to isomorphism) $X \in \mathscr{C}$ such that $\mathsf{GenJF}(X) = \underline{\lambda}$.
	\end{definition}
	We can have another look at the Example \ref{A2ex1} to see in a simple case how it works. As we understand from this example, $\mathsf{rep}(\mathcal{Q})$ is not Jordan recoverable in general. A question that naturally arises is to determine which subcategories $\mathscr{C}$ of $\mathsf{rep}(\mathcal{Q})$ satisfy this property.
	
	In the first instance, any subcategory with the property that the dimension vectors of its indecomposable representations are linearly independent is Jordan recoverable. For such subcategories, we can recover $X$ from its dimension vector $\underline{\dim}(X)$.
	
	As we are interested in this kind of subcategory, let $\mathscr{C}_{\mathcal{Q},m}$ be the subcategory additively generated by all the indecomposable representations $X$ of $\mathcal{Q}$ which are supported at the vertex $m$ (that is to say such that $\dim X_m \geqslant 1$).
	\begin{ex} \label{JRex1}
		Let us take the following gentle quiver $\mathcal{Q}$:
		\begin{center}
			$\displaystyle  \xymatrix @R=0.5em @C=0.5em{ 
				& & & & 4 & &   \\
				\mathcal{Q} & =& & & & & \\
				&  & 1 \ar[rr] & & 2 \ar[uu]  & \ar@{--} @/_/[lu]  & 3 \ar[ll] }$
		\end{center}
		The subcategory $\mathscr{C}_{\mathcal{Q},1}$ is generated by the following indecomposable representations.
		\begin{center}
			$\displaystyle  \xymatrix @R=0.5em @C=0.5em{ 
				& & & & 0 & &   \\
				S_1 & = & & & & & \\
				&  & \mathbb{K} \ar[rr] & & 0 \ar[uu]  & \ar@{--} @/_/[lu]  & 0 \ar[ll] }$ $\displaystyle  \xymatrix @R=0.5em @C=0.5em{ 
				& & & & 0 & &   \\
				R & = & & & & & \\
				&  & \mathbb{K} \ar[rr]_1 & & \mathbb{K} \ar[uu]  & \ar@{--} @/_/[lu]  & 0 \ar[ll] }$ $\displaystyle  \xymatrix @R=0.5em @C=0.5em{ 
				& & & & 0 & &   \\
				I_2 & = & & & & & \\
				&  & \mathbb{K} \ar[rr]_1 & & \mathbb{K} \ar[uu]  & \ar@{--} @/_/[lu]  & \mathbb{K} \ar[ll]^1 }$
			
			$\displaystyle  \xymatrix @R=0.5em @C=0.5em{ 
				& & & & \mathbb{K}& &   \\
				P_1 & = & & & & & \\
				&  & \mathbb{K} \ar[rr]_1 & & \mathbb{K} \ar[uu]^1  & \ar@{--} @/_/[lu]  & 0 \ar[ll] }$
		\end{center}
		Let $X = S_1^a \oplus I_2^b \oplus R^c \oplus P_1^d$. Let $N = (N_1,N_2,N_3,N_4) \in \mathsf{NEnd}(X)$.
		\begin{center}
			$\displaystyle \xymatrix@R=2em @C=2em{
				& & & \mathbb{K}^d & & & \\
				& & & &  \\
				\mathbb{K}^{a+b+c+d} \ar[rrr]^{\begin{tiny}
						\left[ \begin{matrix}
							0_{a \times (b+c+d)} \ I_{b+c+d}
						\end{matrix} \right]
				\end{tiny}} & & & \mathbb{K}^{b+c+d} \ar[uu]^{\begin{tiny}
						\left[ \begin{matrix}
							0_{(b+c) \times d} \ I_{d}
						\end{matrix} \right]
				\end{tiny}} & \ar@{--} @/_/ [lu] & & \mathbb{K}^{b} \ar[lll]_(.4){\begin{tiny}
						\left[ \begin{matrix}
							I_{b} \\
							0_{(c+d)\times b}
						\end{matrix} \right]
				\end{tiny}} \\
				& & & & & &  \\
				\mathbb{K}^{a+b+c+d} \ar@{.>}[uu]^{N_1} \ar[rrr]_{\begin{tiny}
						\left[ \begin{matrix}
							0_{a \times (b+c+d)} \ I_{b+c+d}
						\end{matrix} \right]
				\end{tiny}} & & & \mathbb{K}^{b+c+d} \ar@{.>}[uu]^{N_2} \ar[dd]_{\begin{tiny}
						\left[ \begin{matrix}
							0_{(b+c) \times d} \ I_{d}
						\end{matrix} \right]
				\end{tiny}} & \ar@{--} @/^/ [ld] & & \mathbb{K}^{b} \ar@{.>}[uu]^{N_3} \ar[lll]^(.4){\begin{tiny}
						\left[ \begin{matrix}
							I_b \\
							0_{(c+d) \times b} 
						\end{matrix} \right]
				\end{tiny}} \\
				& & & & \\
				& & & \mathbb{K}^d \ar@{.>}@/_11pc/[uuuuuu]_{N_4}& & & } $
		\end{center}
		Let $(x_1, \ldots, x_{a+b+c+d})$ be a basis of $X_1 = \mathbb{K}^{a+b+c+d}$. By the linear tranformations defining $X$, we can consider $(x_{a+1}, \ldots x_{a+b+c+d})$ as a basis of $ X_2= \mathbb{K}^{b+c+d}$, $(x_{a+1}, \ldots, x_{a+b})$ as a basis of $X_3 =  \mathbb{K}^b$ and $(x_{a+b+c+1}, \ldots, x_{a+b+c+d})$ as a basis of $X_4 = \mathbb{K}^d$. Using the commutative square relations that $N_1$, $N_2$, $N_3$ and $N_4$ have to satisfy, we get:
		\begin{itemize}
			\item[•] $N_2(x_i) = \left[ \begin{matrix}
				0_{a \times (b+c+d)} \ I_{b+c+d}
			\end{matrix} \right] N_1(x_i) $ for all $i \in \{a+1, \ldots, a+b+c+d\}$;
			
			\item[•] $N_3(x_i) = N_2(x_i)$ for all $i \in \{a+1, \ldots, a+b\}$;
			
			\item[•] $N_4(x_i) =  \left[ \begin{matrix}
				0_{(b+c) \times d} \ I_{d}
			\end{matrix} \right] N_2(x_i)$ for all $i \in \{a+b+c+1, \ldots, a+b+c+d\}$.
		\end{itemize}
		We deduce that once we define $N_1$ we define the nilpotent endormorphism $N$. Let us take $N_1$ defined by:
		\begin{center}
			$N_1 (x_i) = \left\{ \begin{matrix}
				x_{i-1} & \text{for } 2 \leqslant i \leqslant a+b+c+d \hfill \\
				0 & \text{for } i=1 \hfill
			\end{matrix} \right.$
		\end{center}
		We can see that $N_1$ allows us to define $N \in \mathsf{NEnd}(X)$ and:
		\begin{center}
			$\mathsf{JF}(N)= ((a+b+c+d),(b+c+d),(c),(d))$
		\end{center}
		Moreover $N$ is a nilpotent endomorphism that attained the maximal element (with respect to $\unlhd$) of $\mathsf{NEnd}(X)$ that we can have. Hence, by \cite[Theorem 2.3]{GPT19}, we can conclude that
		\begin{center}
			$\mathsf{GenJF}(X) = ((a+b+c+d), (b+c+d),(c),(d))$
		\end{center}
		It is easy to see that we can recover $X$ from $\mathsf{GenJF}(X)$ just by the fact that from $\underline{\dim}(X)$ we can already recover the representation in $\mathscr{C}_{\mathcal{Q},1}$. Thus $\mathscr{C}_{\mathcal{Q},1}$ is Jordan recoverable.
	\end{ex}
	However we can ask ourselves in general how we can recover $X$ from its generic Jordan form data. 
	
	Suppose that $\underline{\lambda} = \mathsf{GenJF}(X) = (\lambda^q)_{q \in Q_0}$. From that we get a tuple $d_q = | \lambda^q |$, the dimension of the vector space $X_q$ at each vertex $q$. Let $Y_q$ be a vector space of dimension $d_q$ and $N_q$ a nilpotent endomorphism of $Y_q$ with Jordan blocks size given by $\lambda^q$.
	
	Let $\mathsf{rep}(\mathcal{Q},\underline{\lambda})$ be the representations $Y = ((Y_q)_{q \in Q_0}, (Y_\alpha)_{\alpha \in Q_1})$ such that the $Y_q$ are defined as above and $N = (N_q)_{q \in Q_0}$ define a nilpotent endomorphism of $Y$. \cite{GPT19} show that, under some conditions, $\mathsf{rep}(\mathcal{Q},\underline{\lambda})$ is an irreducible variety and then they deduce that there exists a dense open set $O \subset \mathsf{rep}(\mathcal{Q}, \underline{\lambda})$ such that for any representation in $O$, the dimension vectors of the indecomposable summands are well-defined. However, in our general case, this statement does not hold.
	\begin{ex} \label{Kro}
		Let $\mathcal{Q}$ be the Kronecker quiver.
		\vspace*{0.2cm}
		\begin{center}
			$\displaystyle \xymatrix{1 \ar@<1ex>[rr] \ar@<-1ex>[rr] & & 2 }$
		\end{center}
		\vspace*{0.2cm}
		Let $Y \in \mathsf{rep}(\mathcal{Q}, \underline{\lambda} = ((1),(1)))$. So $Y$ is of the following form
		\begin{center}
			$\displaystyle \xymatrix{\mathbb{K} \ar@<1ex>[rr]^{Y_a} \ar@<-1ex>[rr]_{Y_b} & & \mathbb{K}}$
		\end{center}
		with $Y_a : x \longmapsto k_1 x$ and $Y_b : x \longmapsto k_2 x $ and $k_1, k_2 \in \mathbb{K}$.
		
		Hence $Y$ is isomorphic to one of the following isomorphism classes:
		\begin{center}
			$\displaystyle \xymatrix{\mathbb{K} \ar@<1ex>[rr]^0 \ar@<-1ex>[rr]_0 & & \mathbb{K} }$ \qquad  $\displaystyle \xymatrix{\mathbb{K} \ar@<1ex>[rr]^0 \ar@<-1ex>[rr]_1 & & \mathbb{K} }$ \qquad $\displaystyle \xymatrix{\mathbb{K} \ar@<1ex>[rr]^1 \ar@<-1ex>[rr]_\alpha & & \mathbb{K} } \ (\alpha \in \mathbb{K})$
		\end{center}
		with each distinct value of $\alpha$ giving rise to a distinct isomorphism class of representations from Theorem \ref{BR}.
		
		More concretly:
		\begin{itemize}
			\item[•] one isomorphism class is given by taking $k_1 = 0$ and $k_2 = 0$;
			
			\item[•] another one by taking $k_1= 0$ and $k_2 \neq 0$: it corresponds to take $k_1 = 0$ and $k_2 = 1$ up to isomorphism;
			
			\item[•] for each specific value of $\alpha$, we have an isomorphism class by taking $k_2 = \alpha k_1$ (if $\alpha = 0$, we put $k_2 = 0$) and $k_1 \neq 0$: it corresponds to take $k_1 = 1$ and $k_2 = \alpha$ up to isomorphism.
		\end{itemize}
		None of those classes gives a dense set in $\mathsf{rep}(\mathcal{Q},\underline{\lambda})$. Thus there is no generic choice of a representation which have a generic Jordan form $((1),(1))$.
	\end{ex}
	All is not lost. Depending on the tuple of partitions $\underline{\lambda}$ and the gentle quiver we study, we can still have a generic choice of a representation $Y \in \mathsf{rep}(\mathcal{Q})$. One of the purposes of this article is to prove, by studying a specific kind of subcategories, and under some conditions on the quiver, we still have this property. 
	\begin{definition} \label{CJRdef1}
		A subcategory $\mathscr{C}$ of $\mathsf{rep}(\mathcal{Q})$ is said to be \new{canonically Jordan recoverable} if, for any  $X \in \mathscr{C}$, there exists a dense open set $O \subset \mathsf{rep}(\mathcal{Q},\mathsf{GenJF}(X))$ such that all the representations in $O$ are isomorphic to $X$.
	\end{definition}
	Note that to have canonical Jordan recoverability we need Jordan recoverability. However, canonical Jordan recoverability is a more restrictive condition (see Example \ref{JRnotCJRex})
	\begin{ex} \label{CJRex}
		Let $\mathcal{Q} = (Q,I)$ be the following gentle quiver.
		\begin{center}
			$\displaystyle \xymatrix @R=1em @C=1em{ 
				& & & & \\
				1 \ar[rr] & \ar@{--} @/^1pc/ [rr] & 2\ar[rr] & & 3}$
		\end{center}
		We have $\mathscr{C}_{\mathcal{Q},2} = \mathsf{add}(I_2, S_2, P_2)$ where :
		\vspace*{0.3cm}
		\begin{center}
			$\displaystyle I_2 = \xymatrix @R=1em @C=1em{ 
				\mathbb{K} \ar[rr]_1 & \ar@{--} @/^1pc/ [rr] & \mathbb{K}\ar[rr] & & 0}$, $\displaystyle S_2 = \xymatrix @R=1em @C=1em{ 
				0 \ar[rr] & \ar@{--} @/^1pc/ [rr] & \mathbb{K}\ar[rr] & & 0}$ ,  $\displaystyle P_2 = \xymatrix @R=1em @C=1em{ 
				0 \ar[rr] & \ar@{--} @/^1pc/ [rr] & \mathbb{K} \ar[rr]_1 & & \mathbb{K}}$
		\end{center}
		
		Let $X = I_2^a \oplus S_2^b \oplus P_2^c \in \mathscr{C}_{\mathcal{Q},2}$. 
		
		Let $N = (N_1, N_2,N_3)$ a generic nilpotent endomorphism. 
		\begin{center}
			$\displaystyle \xymatrix @R=1em @C=1em{ 
				& & & & \\
				\mathbb{K}^a \ar[rr]_(.4){X_\alpha} \ar@{.>}[dd]^{N_1} & \ar@{--} @/^1pc/ [rr] & \mathbb{K}^{a+b+c} \ar[rr]_(.6){X_\beta} \ar@{.>}[dd]^{N_2} & & \mathbb{K}^c \ar@{.>}[dd]^{N_3} \\
				& & & & \\
				\mathbb{K}^a \ar[rr]^(.4){X_\alpha} & \ar@{--} @/_1pc/ [rr] & \mathbb{K}^{a+b+c} \ar[rr]^(.6){X_\beta} & & \mathbb{K}^c \\
				& & & & }$
		\end{center}
		By direct calculation, inspired by Example \ref{1stGentlex}, we get 
		\begin{center}
			$\mathsf{GenJF}(X) = ((a),(a+b+c),(c))$.
		\end{center}
		Let $Y \in \mathscr{C}_{\mathcal{Q},2}$ such that $\mathsf{GenJF}(Y) = \mathsf{GenJF}(X)$. Clearly we can recover the vector spaces that we have at each vertex. By linear independence of dimension vectors of indecomposable representations that generate $\mathscr{C}_{\mathcal{Q},2}$, and by the following 
		\begin{center}
			$\underline{\dim}(Y) = \left( \begin{matrix}
				a \\
				a+b+c \\
				c
			\end{matrix} \right) = a \times \underline{\dim}(I_2) + b \times  \underline{\dim}(S_2) + c \times \underline{\dim}(P_2)$
		\end{center}
		we can conclude that $Y = I_2^a \oplus S_2^b \oplus P_2^c$. That is why $\mathscr{C}_{\mathcal{Q},2}$ is Jordan recoverable. 
		
		We will prove now that $\mathscr{C}_{\mathcal{Q},2}$ is canonically Jordan recoverable. Let us take a generic nilpotent endomorphism of the form $\lambda = ((a),(a+b+c),(c))$, $N = (N_1,N_2,N_3)$. By the dimension vector we can extract from $\lambda$, we can assure that $Y$ is of the form
		\begin{center}
			$\displaystyle Y \cong \xymatrix @R=1em @C=1em{ 
				\mathbb{K}^a \ar[rr]_(.4){Y_\alpha}& \ar@{--} @/^1pc/ [rr] & \mathbb{K}^{a+b+c} \ar[rr]_(.6){Y_\beta}  & & \mathbb{K}^c  }$
		\end{center}
		with $Y_\alpha$, $Y_\beta$ still to find.
		
		Let $(x_1, \ldots, x_a)$, $(y_1, \ldots, y_{a+b+c})$ and $(z_1, \ldots, z_c)$ be adapted bases for the respective nilpotent endormorphisms $N_1, N_2$ and $N_3$ associated to each vector space $Y_1, Y_2$ and $Y_3$.
		
		By commutativity relations, we have:
		\begin{itemize}
			\item[•] $Y_\alpha$ is completely defined by the image of  $x_1$;
			
			\item[•] $Y_\alpha(x_1) \in \mathsf{Ker}(N^a)$, which means
			
			\hspace*{-0.5cm} $Y_\alpha(x_1) = \alpha_1 y_{1+b+c} + \ldots + \alpha_a y_{a+b+c}$ for $\alpha_i \in \mathbb{K}$ for $1 \leqslant i \leqslant a$;
			
			\item[•] As $Y_\alpha$ has to be chosen generically, we have $\alpha_1 \neq 0$ and $Y_\alpha$ is injective;
			
			\item[•] $Y_\beta$ is completely defined by the image of $y_1$ ;
			
			\item[•] $Y_\beta(y_1) = \beta_1 z_1 + \ldots + \beta_{c} z_c$ with $\beta_j \in \mathbb{K}$ for $1 \leqslant j \leqslant c$;
			
			\item[•] As $Y_\beta$ has to be chosen generically, we have $\beta_1 \neq 0$ and $Y_\beta$ is surjective;
			
			\item[•] The relation $Y_\beta \circ Y_\alpha = 0$ is satisfied for free.
		\end{itemize}
		The following diagram sums up the previous points.
		\begin{center}
			$\displaystyle \xymatrix@C=4em @R=2em{
				Y_1 & Y_2 & Y_3 \\
				& y_1 \ar[d]^{N_2} & z_1 \ar[d]^{N_3}  \\
				& y_2 \ar[d]^{N_2}& z_2 \ar[d]^{N_3}  \\
				& \vdots \ar[d]^{N_2} & \vdots \ar[d]^{N_3}  \\
				& y_c  \ar[d]^{N_2} &  z_c \ar[d]^{N_3}  \\
				& y_{1+c} \ar[d]^{N_2} & 0 \\
				& \vdots \ar[d]^{N_2} & \\
				& y_{b+c} \ar[d]^{N_2} & \\
				x_1 \ar[d]^{N_1} & y_{1+b+c} \ar[d]^{N_2} &  \\
				x_2 \ar[d]^{N_1} & y_{2+b+c} \ar[d]^{N_2} & \\
				\vdots \ar[d]^{N_1} & \vdots \ar[d]^{N_2} &  \\
				x_a \ar[d]^{N_1} & y_{a+b+c} \ar[d]^{N_2} & \\
				0 & 0 & 
				\save "2,3"."5,3"*++[F.]\frm{}
				\ar@{<--}^{Y_\beta} "2,2"
				\restore
				\save "3,3"."5,3"*+[F.]\frm{}
				\ar@{<--}^{Y_\beta} "3,2"
				\restore  
				\save "5,3"."5,3"*[F.]\frm{}
				\ar@{<--}^{Y_\beta} "5,2"
				\restore 
				\save "9,2"."12,2"*++[F.]\frm{}
				\ar@{<--}^(.67){Y_\alpha} "9,1"
				\restore
				\save "10,2"."12,2"*+[F.]\frm{}
				\ar@{<--}^(.65){Y_\alpha} "10,1"
				\restore  
				\save "12,2"."12,2"*[F.]\frm{}
				\ar@{<--}^(.6){Y_\alpha} "12,1"
				\restore }$
		\end{center}
		Hence we conclude that $Y \cong  I_2^a \oplus S_2^b \oplus P_2^c = X $. 
		
		Thus $\mathscr{C}_{\mathcal{Q},2}$ is canonically Jordan recoverable.
	\end{ex}
	
	\begin{remark} \label{surprise} It is surprising to see that from any generic Jordan form we can get in the last example, we can recover the representation. Moreover, the fact that the relation ($Y_\beta Y_\alpha = 0$) is obtained automatically from generic choices of morphisms is noteworthy. The reader is invited to have another look at the remarks done below Example \ref{1stGentlex}.
	\end{remark}
	
	\begin{remark} \label{CJRneed1} We can also take notice that if we add an arrow $\gamma$ between $1$ and $3$ whatever its orientation and relations that we could add, then we lose the fact that $\mathscr{C}_{\mathcal{Q},2}$ is canonically Jordan recoverable. More explicitly, let us take the following quiver.
		\begin{center}
			$\displaystyle \xymatrix@R=0.5em @C=0.5em{
				& & & &  2 \ar[rrrrdddd]^\beta & & & & \\
				& & & \ar@{--}@/_/[rr] & & & & &  \\
				\mathcal{Q} =& & & & & & & &  \\
				& \ar@/^/@{--}[rd]& & & & & & &  \\
				1 \ar[rrrruuuu]^\alpha & &  & & &  & \ar@/^/@{--}[ur] & & 3 \ar[llllllll]_{\gamma}}$
		\end{center}
		The subcategory $\mathscr{C}_{\mathcal{Q},2}$ is Jordan recoverable as the dimension vectors of the indecomposable objects ($I_2, P_2$ and $S_2$) are linearly independant. Note that if we take $X = I_2^a \oplus S_2^b \oplus P_2^c$ then we have $\mathsf{GenJF}(X) = ((a),(a+b+c),(c))$.
		
		However, by taking $X = I_2 \oplus P_2$, there is no generic choice in $\mathsf{rep}(\mathcal{Q},((1),(2),(1))$. After some calculus, we note that we have  two isomorphism classes which come naturally from generic restrictions:
		\begin{center}
			$\displaystyle \xymatrix@R=0.5em @C=0.5em{
				& & & &  \mathbb{K}^2 \ar[rrrrdddd]^{\tiny \left[\begin{matrix}
						0 & 1
					\end{matrix} \right]} & & & & \\
				& & & \ar@{--}@/_/[rr] & & & & &  \\
				X_1 =& & & & & & & &  \\
				& & & & & & & &  \\
				\mathbb{K} \ar[rrrruuuu]^{\tiny \left[\begin{matrix}
						1 \\
						0
					\end{matrix}\right]} & & \ar@/_/@{--}[lu] & & & & \ar@/^/@{--}[ru] & & \mathbb{K}\ar[llllllll]_{0}}$ \qquad $\displaystyle \xymatrix@R=0.5em @C=0.5em{
				& & & &  \mathbb{K}^2 \ar[rrrrdddd]^{0} & & & & \\
				& & & \ar@{--}@/_/[rr] & & & & &  \\
				X_2 =& & & & & & & &  \\
				& & & & & & & &  \\
				\mathbb{K} \ar[rrrruuuu]^{0} & & \ar@/_/@{--}[lu] & & & & \ar@/^/@{--}[ru] & & \mathbb{K}\ar[llllllll]_{1}}$
		\end{center}
		We have $X_1 \cong X \ncong X_2$. So we conclude that there is no dense open set $O$ in $\mathsf{rep}(\mathcal{Q},((1),(2),(1))$ such that any representation $Y$ is isomorphic to $X$. Hence $\mathscr{C}_{\mathcal{Q},2}$ is not canonically Jordan recoverable.
	\end{remark}
	Now we introduce the following remarkable notion.
	\begin{definition} \label{mindef}
		A \new{minuscule} vertex $m \in Q_0$ of $\mathcal{Q}$ is a vertex such that, for any indecomposable representation $X \in \mathsf{rep}(Q)$, we have $\dim (X_m) \leqslant 1$.
	\end{definition}
	For gentle algebras, we can characterize minuscule vertices using combinatorial tools introduced at the end of the previous subsection.
	\begin{prop} \label{minstring}
		Let $\mathcal{Q} = (Q,I)$ be a gentle algebra. A vertex $m \in Q_0$ is minuscule if and only if each string of $\mathcal{Q}$ passes through $m$ at most once.
	\end{prop}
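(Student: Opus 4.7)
The plan is to argue in both directions using the combinatorial formulas for dimensions of the indecomposables, which are all string or band representations by Theorem \ref{BR}. The key observation, read directly from Definitions \ref{stringrepdef} and \ref{bandrepdef}, is that for a string representation $M(\rho)$ we have $\dim M(\rho)_q = \#\{i : v_i = q\}$, i.e.\ the number of times the vertex sequence of $\rho$ visits $q$; and for a band representation $M(\omega,\lambda,d)$ we similarly have $\dim M(\omega,\lambda,d)_q = d \cdot \#\{0 \le i \le k-1 : v_i = q\}$, the $d$-fold of the number of visits of $\omega$ to $q$ (where the identification $v_k = v_0$ is accounted for).

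For the direction $(\Leftarrow)$, suppose every string of $\mathcal{Q}$ passes through $m$ at most once. Then for any string representation $M(\rho)$ we immediately get $\dim M(\rho)_m \le 1$. For any band representation $M(\omega,\lambda,d)$, the band $\omega$ is itself a string, so it passes through $m$ at most once. But in fact it cannot pass through $m$ at all: if some $v_i = m$ with $0 \le i \le k-1$, then by the band axiom $\omega^2$ is also a string, and it visits $m$ at the indices $i$ and $k+i$, contradicting the hypothesis. Hence $\dim M(\omega,\lambda,d)_m = 0$. Since every indecomposable is of one of these two types, $m$ is minuscule.

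For the direction $(\Rightarrow)$, we argue by contrapositive. If some string $\rho$ passes through $m$ at least twice, then $M(\rho)$ is indecomposable by Theorem \ref{BR} and $\dim M(\rho)_m \ge 2$, so $m$ is not minuscule.

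The argument is essentially a direct computation once the combinatorial dimension formulas are in hand; the only point needing a little care is the band case in $(\Leftarrow)$, where one must use the fact that $\omega^i$ is always a string in order to rule out bands visiting $m$ at all, rather than just bounding their contribution. No serious obstacle is expected.
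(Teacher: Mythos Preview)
Your proof is correct and essentially elaborates on what the paper dismisses in one line as ``a trivial consequence of the definition above and Theorem~\ref{BR}.'' Your explicit handling of the band case via the string $\omega^2$ is a useful detail that the paper leaves entirely implicit.
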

	\begin{proof}[Proof]
		This is a trivial consequence of the definition above and  Theorem \ref{BR}.
	\end{proof}
	\begin{ex} \label{minex}
		Consider the following gentle quiver.
		$$\displaystyle  \xymatrix@R=1em @C=1em{
			& & & 1 \ar[lldd]_\alpha \ar[rrdd]^\beta & & \\
			\mathcal{Q} = & & \ar@{--} @/^/[rd]& & & \\
			& 2\ar[rrrr]_\gamma & & & & 3
		}$$
		The string $\rho = \alpha \beta^{-1} \gamma$ is the only maximal string of $\mathcal{Q}$ and all the strings of $\mathcal{Q}$ are substrings of $\rho$. Hence $1$ and $3$ are minuscule vertices of $\mathcal{Q}$, but $2$ is not. 
	\end{ex}
	The fact that a vertex $m$ is minuscule seems to be a crucial criterion to get the subcategory $\mathscr{C}_{\mathcal{Q},m}$ to be canonically Jordan recoverable. The following recent result highlights this fact.
	\begin{theorem}[{\cite[Theorem 1.3]{GPT19}}] \label{GPT} If $Q$ is a Dynkin quiver and $m$ is a minuscule vertex, then $\mathscr{C}_{Q,m}$ is canonically Jordan recoverable.
	\end{theorem}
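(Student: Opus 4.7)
The plan is to produce, for each $X \in \mathscr{C}_{\mathcal{Q},m}$ with decomposition $X\cong \bigoplus_p M_p^{n_p}$, an explicit representation $Y$ in $\mathsf{rep}(\mathcal{Q},\mathsf{GenJF}(X))$ that lies in the same isomorphism class as $X$, and to prove that the orbit of $Y$ under the natural $\prod_{q\in Q_0} GL(Y_q)$ action is open and dense in that variety. The backbone of the argument is the combinatorics of the \emph{minuscule poset} $P$ attached to the pair $(Q,m)$, which encodes both the indecomposables in $\mathscr{C}_{\mathcal{Q},m}$ and the way nilpotent endomorphisms must interact with the arrows incident to $m$.

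First, since $Q$ is Dynkin, Gabriel's theorem identifies indecomposable representations with positive roots and guarantees that each is rigid and determined by its dimension vector. Because $m$ is minuscule, the indecomposables $\{M_p\}_{p \in P}$ belonging to $\mathscr{C}_{\mathcal{Q},m}$ are parametrized by a finite distributive lattice $P$, and every object of $\mathscr{C}_{\mathcal{Q},m}$ decomposes uniquely as $\bigoplus_{p\in P}M_p^{n_p}$. I would fix bases of the $X_q$ indexed by pairs $(p,i)$ with $p$ passing through $q$ and $1 \leqslant i \leqslant n_p$, then compute $\mathsf{GenJF}(X)$ by writing down a nilpotent endomorphism $N\in\mathsf{NEnd}(X)$ that acts on each $X_q$ as a downward shift relative to a chosen linear extension of $P$. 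Minusculeness of $m$ ensures that distinct strands do not interact at $m$, so this local generic choice propagates through the commutative square relations to a genuine generic nilpotent endomorphism of $X$, yielding an RSK-type rule that converts the multiplicity vector $(n_p)_{p\in P}$ into the tuple $\mathsf{GenJF}(X) = (\lambda^q)_{q\in Q_0}$.

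For the converse direction, given $\underline{\lambda} = \mathsf{GenJF}(X)$, the RSK-type rule inverts to recover a unique multiplicity vector $(n_p)_{p\in P}$, hence a unique candidate $X$ up to isomorphism. I would then select a particular $Y \in \mathsf{rep}(\mathcal{Q},\underline{\lambda})$ realising this candidate, using the Jordan blocks prescribed by $\underline{\lambda}$ on each $Y_q$ together with the canonical maps $Y_\alpha$ dictated by the block structure, and show that its $\prod_q GL(Y_q)$-orbit is open and dense inside $\mathsf{rep}(\mathcal{Q},\underline{\lambda})$. Theorem \ref{GPT3} gives irreducibility on the fibres of the projection $\mathsf{rep}(\mathcal{Q},\underline{\lambda}) \to \prod_q \mathcal{O}_{\lambda^q}$ (where $\mathcal{O}_{\lambda^q}$ denotes the nilpotent orbit of Jordan type $\lambda^q$), and a standard base-change argument then forces $\mathsf{rep}(\mathcal{Q},\underline{\lambda})$ itself to be irreducible.

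The hardest part, I expect, is twofold. Combinatorially, setting up the RSK-type bijection uniformly for all minuscule pairs (including the type $D$ and $E$ minuscule nodes, not just type $A$) requires a genuine poset-theoretic reformulation rather than the naive diagonal-by-diagonal argument that suffices in type $A$. Geometrically, proving irreducibility of $\mathsf{rep}(\mathcal{Q},\underline{\lambda})$ is delicate: Example \ref{Kro} shows that this variety can fail to be irreducible for non-Dynkin quivers, so minusculeness must enter essentially, presumably through a dimension count comparing the candidate orbit with $\mathsf{rep}(\mathcal{Q},\underline{\lambda})$ via a careful analysis of the parameters left free by the commutative square relations once a generic $N$ is fixed.
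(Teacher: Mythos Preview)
This theorem is not proved in the present paper: it is quoted verbatim from \cite[Theorem 1.3]{GPT19} and used as a black box (notably in the proof of Proposition \ref{1et2b}). There is therefore no proof here to compare your proposal against.

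That said, your sketch is broadly faithful to the strategy of \cite{GPT19} as summarized in Section 4 of this paper: the minuscule poset $\mathscr{P}_{Q,m}$ does index the indecomposables of $\mathscr{C}_{Q,m}$, and the map $\rho_{Q,m}$ sending $X$ to an order-reversing function on $\mathscr{P}_{Q,m}$ (built from $\mathsf{GenJF}(X)$) is indeed the RSK-type bijection you allude to. Your identification of the two hard points---the uniform poset-theoretic treatment across all Dynkin types and the irreducibility of $\mathsf{rep}(\mathcal{Q},\underline{\lambda})$---is accurate. One caution: your proposal leans on Theorem \ref{GPT3} for irreducibility ``on the fibres,'' but that theorem asserts irreducibility of $\mathsf{NEnd}(X)$, not of $\mathsf{rep}(\mathcal{Q},\underline{\lambda})$; the latter requires a separate argument in \cite{GPT19}, and your base-change sketch is too vague to stand as a proof. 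If you intend to supply a self-contained argument rather than cite \cite{GPT19}, that gap would need to be filled.
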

	It seems that $m$ has to be minuscule if we want to have canonical Jordan recoverability or Jordan recoverability of $\mathscr{C}_{\mathcal{Q},m}$; we will notice this point at the beginning of the proof (see Remark \ref{needmin}). However, in our gentle case, this condition is not sufficient.
	\begin{ex} \label{minnot1}
		Let us take again the gentle quiver from the previous example.
		
		For any vertex $m \in \{1,2,3\}$, we have the following indecomposable representations that are in $\mathscr{C}_{\mathcal{Q},m}$.
		\begin{center}
			$\displaystyle  \xymatrix@R=1em @C=1em{
				& & & \mathbb{K} \ar[lldd]_1 \ar[rrdd]^1 & & \\
				X = & & \ar@{--} @/^/[rd]& & & \\
				& \mathbb{K} \ar[rrrr]_0 & & & & \mathbb{K}
			}$ \qquad $\displaystyle  \xymatrix@R=1em @C=1em{
				& & & \mathbb{K} \ar[lldd]_0 \ar[rrdd]^1 & & \\
				Y = & & \ar@{--} @/^/[rd]& & & \\
				& \mathbb{K} \ar[rrrr]_1 & & & & \mathbb{K}
			}$
		\end{center}
		We can easily check that $\mathsf{GenJF}(X) = \mathsf{GenJF}(Y) = ((1),(1),(1))$ and consequently $\mathscr{C}_{\mathcal{Q},m}$ is not Jordan recoverable, even though $1$ and $3$ are minuscule.
	\end{ex}
	Let us see a less trivial example.
	\begin{ex} \label{minnot2} Consider the following gentle quiver.
		\vspace{0.2cm}
		\begin{center}
			$\displaystyle \xymatrix @R=1em @C=1em{1 \ar[rrr]_a & & \ar@{--} @/^1pc/ @<1ex> [rr]  & 2 \ar@<1ex> [rrr]^b \ar@<-1ex>[rrr]_c & & & 3}$
		\end{center}
		We can easily check that the only vertex which is minuscule is $1$. If we take the following representations
		\begin{center}
			$\displaystyle X =  \underset{=M(1)}{\underbrace{\xymatrix @R=1em @C=1em{\mathbb{K} \ar[rrr] & & \ar@{--} @/^1pc/ @<1ex> [rr]  & 0 \ar@<1ex> [rrr] \ar@<-1ex>[rrr] & & & 0}}} \oplus \underset{= M \left( \vcenter{
					\begin{small}
						\xymatrix @R=0.6em @C=0.6em{
							1 \ar[rd]^a & & & & \\
							& 2 \ar[rd]^c & & 2 \ar[rd]^c \ar[ld]^b & \\
							& & 3 & & 3 }
					\end{small}
				} \right)}{\underbrace{\xymatrix @R=1em @C=1em{\mathbb{K} \ar[rrr]_{\begin{tiny} \left[ 
								\begin{matrix}
									1 \\
									0
								\end{matrix} \right]
						\end{tiny}} & & \ar@{--} @/^1pc/ @<1ex> [rr]  & \mathbb{K}^2 \ar@<1ex> [rrr]^{\begin{tiny} \left[ 
								\begin{matrix}
									0 & 0 \\
									0 & 1
								\end{matrix} \right]
						\end{tiny}} \ar@<-1ex>[rrr]_{\begin{tiny} \left[ 
								\begin{matrix}
									1 & 0 \\
									0 & 1
								\end{matrix} \right]
						\end{tiny}} & & & \mathbb{K}^2}}}$
		\end{center}
		and
		\begin{center}
			$\displaystyle Y = ( \underset{ M \left( \vcenter{
					\begin{small}
						\xymatrix @R=0.6em @C=0.6em{
							1 \ar[rd]^a & & \\
							& 2 \ar[rd]^c & \\
							& & 3 }
					\end{small}
				} \right)}{\underbrace{\xymatrix @R=1em @C=1em{\mathbb{K} \ar[rrr]_1 & & \ar@{--} @/^1pc/ @<1ex> [rr]  & \mathbb{K} \ar@<1ex> [rrr]^0 \ar@<-1ex>[rrr]_1 & & & \mathbb{K}}}} )^2 $
		\end{center}
		\vspace*{0.2cm}
		then we get $\mathsf{GenJF}(X) = \mathsf{GenJF}(Y) = ((2),(2),(2))$.
	\end{ex}
	From these last examples, it could seem that we have to work with a special kind of quivers to get what we want, those which have only minuscule vertices. However it is more complicated than that. We can have gentle quivers with only minuscule vertices but with some $\mathscr{C}_{\mathcal{Q},m}$ which are still neither canonically Jordan recoverable (as we already saw in Remark \ref{CJRneed1}) nor Jordan recoverable.
	\begin{ex} \label{minnot3} Let us take again the quiver of Example \ref{JRex1}.
		\begin{center}
			$\displaystyle  \xymatrix @R=0.5em @C=0.5em{ 
				& & & & 4 & &   \\
				\mathcal{Q} & =& & & & & \\
				&  & 1 \ar[rr]^a & & 2 \ar[uu]^b  & \ar@{--} @/_/[lu]  & 3 \ar[ll]^c }$
		\end{center}
		We can check that all the vertices of $\mathcal{Q}$ are minuscule.
		
		We can easily see that $\mathscr{C}_{\mathcal{Q},3}$ and $ \mathscr{C}_{\mathcal{Q},4}$ are (canonically) Jordan recoverable for the same reasons that $\mathscr{C}_{\mathcal{Q},1}$ is.  However, surprisingly, $\mathscr{C}_{\mathcal{Q},2}$ is not Jordan recoverable, even if $2$ is a minuscule vertex and $\mathcal{Q}$ is derived equivalent to an $A_4$-Dynkin type quiver. Indeed with these two following representations in $\mathscr{C}_{\mathcal{Q},2}$:
		\begin{itemize}
			\item[•]$\displaystyle X = M \left( \vcenter{
				\begin{small}
					\xymatrix @R=0.6em @C=0.6em{
						2 \ar[rd]^b &  \\
						& 4  }
				\end{small}
			} \right) \oplus M \left( \vcenter{
				\begin{small}
					\xymatrix @R=0.6em @C=0.6em{
						1 \ar[rd]^a &\\
						& 2   }
				\end{small}
			} \right) \oplus M \left( \vcenter{
				\begin{small}
					\xymatrix @R=0.6em @C=0.6em{
						1 \ar[rd]^a & & 3 \ar[ld]^c \\
						& 2  &  }
				\end{small}
			} \right) \oplus M \left( \vcenter{
				\begin{small}
					\xymatrix @R=0.6em @C=0.6em{
						3 \ar[rd]^c & \\
						& 2  }
				\end{small}
			} \right)$ ;
			
			\item[•] $X' = M \left( \vcenter{
				\begin{small}
					\xymatrix @R=0.6em @C=0.6em{
						1 \ar[rd]^a & &\\
						& 2 \ar[rd]^b & \\
						& & 4 }
				\end{small}
			} \right) \oplus M(2) \oplus  M \left( \vcenter{
				\begin{small}
					\xymatrix @R=0.6em @C=0.6em{
						1 \ar[rd]^a & & 3 \ar[ld]^c \\
						& 2  &  }
				\end{small}
			} \right) \oplus M \left( \vcenter{
				\begin{small}
					\xymatrix @R=0.6em @C=0.6em{
						3 \ar[rd]^c & \\
						& 2}
				\end{small}
			} \right)$;
		\end{itemize} we get $\mathsf{GenJF}(X) = \mathsf{GenJF}(X') = ((2),(3,1),(2),(1))$.
	\end{ex}
	
	\section{Proof of the main results}
	
	We recall and reformulate the main results that we will prove.
	\begin{theorem} \label{main2}
		Let $\mathcal{Q} = (Q,I)$ be a finite connected gentle quiver and $m \in Q_0$. The subcategory $\mathscr{C}_{\mathcal{Q},m}$ is canonically Jordan recoverable if and only if $m$ satisfies the following conditions:
		\begin{itemize}
			\item[$(i)$] For any pair of strings $\rho$ and $\nu$ passing through $m$, there is no arrow $\alpha \in Q_1$ such that $\alpha \notin \mathsf{Supp}_1(\rho) \cup \mathsf{Supp}_1(\nu)$, $s(\alpha) \in \mathsf{Supp}_0(\rho)$ and $t(\alpha) \in \mathsf{Supp}_0(\nu)$;
			
			\item[$(ii)$] At least one of the two following conditions:
			\begin{itemize}
				\item[$(a)$] There is at most one arrow $\alpha \in Q_1$ such that $s(\alpha) = m$, and there is at most one arrow $\beta \in Q_1$ such that $t(\beta) = m$ ; if $\alpha$ and $\beta$ both exist then $\alpha \beta \in I$;
				
				\item[$(b)$] There is at most one string of $\mathcal{Q}$ maximal by inclusion and passing through $m$. 
			\end{itemize}
		\end{itemize}
	\end{theorem}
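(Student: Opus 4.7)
My plan is to prove the two directions separately. For necessity, I would argue that if either $(i)$ or $(ii)$ fails then $\mathscr{C}_{\mathcal{Q},m}$ already fails to be Jordan recoverable, so a fortiori not canonically Jordan recoverable. Explicit counterexamples of two non-isomorphic representations with identical generic Jordan form data can be constructed by generalizing the patterns of Remark~\ref{CJRneed1} and Examples~\ref{minnot1}--\ref{minnot3}. When $(i)$ fails via an arrow $\alpha$ bridging the supports of two strings $\rho,\nu$ through $m$, one assembles $M(\rho)\oplus M(\nu)$ against a string module supported on $\alpha$ (plus whatever other pieces are needed) and verifies directly that the two representations have identical generic Jordan form data. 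When $(ii)$ fails, two maximal strings meet at $m$ with incident arrows not in relation, creating at $m$ the same "competing open sets" obstruction described in the third bullet following Example~\ref{1stGentlex}.

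For sufficiency I would first show (invoking Remark~\ref{needmin}) that conditions $(i)$ and $(ii)$ force $m$ to be minuscule, so by Proposition~\ref{minstring} every indecomposable $M(\rho)\in\mathscr{C}_{\mathcal{Q},m}$ corresponds to a string passing through $m$ exactly once, and no band representations appear. Fix $X=\bigoplus_i M(\rho_i)^{n_i}\in\mathscr{C}_{\mathcal{Q},m}$. The construction of Example~\ref{JRex1} and Example~\ref{CJRex} generalizes: at each vertex $q$, organize the canonical basis vectors of $X_q$ according to which string they come from and their position along that string relative to $m$, then define $N\in\mathsf{NEnd}(X)$ as the "global shift" that moves each basis vector one step closer to $m$ along its string. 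Condition $(i)$ guarantees that this shift is globally consistent, and by the maximality part of Theorem~\ref{GPT3} one obtains an explicit combinatorial formula for $\underline{\lambda}:=\mathsf{GenJF}(X)$ in terms of the multiplicities $n_i$ and the string lengths.

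The recovery step is the heart of the proof. Given a generic $Y\in\mathsf{rep}(\mathcal{Q},\underline{\lambda})$, pick bases at each vertex adapted to the Jordan blocks of each $N_q$ and exploit the commutativity squares $Y_\alpha N_{s(\alpha)}=N_{t(\alpha)} Y_\alpha$ together with the ideal relations to constrain each $Y_\alpha$. Condition $(i)$ lets me process arrows in a depth-first order along the tree formed by the full subquiver on vertices reached by strings through $m$, so that at each step the generic nonvanishing of a single scalar (the top-left entry of $Y_\alpha$ in adapted bases) forces $Y_\alpha$ to match the corresponding map in $X$ up to a scalar that can be absorbed into an isomorphism. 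Condition $(ii)$ handles the root of the tree: either $m$ is a leaf and only one direction is analyzed, or $(ii)(a)$ applies and the forced relation $\alpha\beta\in I$ eliminates the competing open sets at $m$. Assembling these local isomorphisms globally yields $Y\cong X$, exactly as in the template of Example~\ref{CJRex}.

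The main obstacle, and what I expect to demand the most care, is verifying that the reconstructed $Y$ automatically satisfies all ideal relations $Y_\alpha Y_\beta=0$ for $\alpha\beta\in I$ whenever the relevant arrows do not both lie on a common string through $m$. As highlighted in Remark~\ref{surprise}, this phenomenon is genuinely nontrivial: the generic choice in $\mathsf{rep}(\mathcal{Q},\underline{\lambda})$ must avoid straying into representations that live only in $\mathsf{rep}(Q,\underline{\lambda})$. I expect the tree condition $(i)$ to be precisely what makes this work, by ruling out any path along which dimension could "accumulate" to force a nonzero composition; establishing this cleanly will likely require a careful dimension count along each root-to-leaf path of the tree, tracking how the adapted bases interact with the subspaces $\mathsf{Ker}(N_q^k)$ at each vertex.
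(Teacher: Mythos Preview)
Your necessity argument for condition $(i)$ has a genuine gap. You claim that if $(i)$ fails then $\mathscr{C}_{\mathcal{Q},m}$ already fails to be Jordan recoverable, but this is false: Remark~\ref{CJRneed1}, which you yourself cite, is precisely an example where $(i)$ fails yet $\mathscr{C}_{\mathcal{Q},2}$ \emph{is} Jordan recoverable (the dimension vectors of $I_2,S_2,P_2$ are linearly independent). The distinction between $(i)$ and the weaker hypothesis $(i*)$ of Theorem~\ref{2ndmain2} is exactly this: when $(i)$ fails but $(i*)$ holds, the offending arrow $\alpha$ lies in \emph{no} string through $m$, so there is no string module in $\mathscr{C}_{\mathcal{Q},m}$ supported on $\alpha$ against which to play $M(\rho)\oplus M(\nu)$. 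Your proposed counterexample pair simply does not exist inside the subcategory.

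The paper's argument (Proposition~\ref{not1}) avoids this trap by attacking canonical Jordan recoverability directly rather than via Jordan recoverability. Take $X=M(\rho)\oplus M(\nu)$, so that $X_\gamma=0$ for the bridging arrow $\gamma$. One then exhibits an explicit $W\in\mathsf{rep}(\mathcal{Q},\mathsf{GenJF}(X))$ with $W_\gamma\neq 0$; since having $Y_\gamma=0$ is a closed, non-dense condition in $\mathsf{rep}(\mathcal{Q},\mathsf{GenJF}(X))$, no dense open set of representations can be isomorphic to $X$. This is a different mechanism from producing two objects of $\mathscr{C}_{\mathcal{Q},m}$ with the same generic Jordan form data, and it is the one you need here. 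Your treatment of the failure of $(ii)$ and your outline for sufficiency are broadly in line with the paper's approach (the paper splits sufficiency cleanly into the case $(ii)(b)$, which reduces to a Dynkin $A_n$ subquiver and Theorem~\ref{GPT}, and the case $(ii)(a)$, handled by an induction on string length after showing $\mathsf{GenJF}(X)$ consists of single blocks), but the necessity argument for $(i)$ must be reworked along the lines above.
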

	
	\begin{theorem}\label{2ndmain2}
		Let $\mathcal{Q} = (Q,I)$ be a finite connected gentle quiver and $m \in Q_0$. The subcategory $\mathscr{C}_{\mathcal{Q},m}$ is Jordan recoverable if and only if $m$ satisfies the point $(ii)$ of Theorem \ref{main2} and:
		\begin{itemize}
			\item[$(i*)$] The strings that start at $m$ are uniquely determined by their endpoint.
		\end{itemize}
	\end{theorem}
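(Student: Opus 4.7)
The plan is to follow the strategy used for Theorem \ref{main2} with adjustments at the places where the weaker condition $(i*)$ replaces $(i)$. Since canonical Jordan recoverability implies Jordan recoverability, Theorem \ref{main2} already covers the cases in which $(i)$ itself holds, and the genuinely new content lies in settings where the support subquiver of strings through $m$ carries cycles permitted by $(i*)$ but forbidden by $(i)$.

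For necessity, if $(ii)$ fails I would reuse (after inspection) the counter-examples constructed for Theorem \ref{main2}: those pairs (of the flavour of Examples \ref{minnot1}, \ref{minnot2} and \ref{minnot3}) consist of non-isomorphic objects of $\mathscr{C}_{\mathcal{Q},m}$ with the same generic Jordan form data, so Jordan recoverability already fails. If $(i*)$ fails, two distinct strings $\rho$ and $\rho'$ both start at $m$ and share an endpoint; following the pattern of Example \ref{minnot1}, I would pair $M(\rho) \oplus M(\rho')$ against a balanced sum of shorter string modules supported on the appropriate vertices and compute generic Jordan forms via the graph-map description of morphisms (Proposition \ref{morph}) to verify that the two resulting modules have equal $\mathsf{GenJF}$.

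For sufficiency, assume $(ii)$ and $(i*)$. The first step is to observe that $m$ is minuscule: under $(ii)(a)$ any string through $m$ would have to enter by the unique incoming arrow $\beta$ and leave by the unique outgoing $\alpha$, and the relation $\alpha\beta \in I$ forbids this repetition; under $(ii)(b)$ strings through $m$ are substrings of a single maximal string, which visits $m$ once. By Proposition \ref{minstring} we conclude $m$ is minuscule, and no band can appear in $\mathscr{C}_{\mathcal{Q},m}$ since powers of a band would visit $m$ repeatedly. Consequently every indecomposable of $\mathscr{C}_{\mathcal{Q},m}$ is a string module $M(\rho)$ whose string splits at $m$ into two half-strings $\rho_L, \rho_R$ both starting at $m$ (one possibly the lazy path $e_m$), and under $(i*)$ each half is determined by its endpoint, so isomorphism classes are indexed by unordered pairs of endpoints. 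To recover the multiplicities $a_\rho$ in $X = \bigoplus_\rho M(\rho)^{a_\rho}$ from $\mathsf{GenJF}(X)$, I would explicitly compute $\mathsf{GenJF}(X)$ vertex-by-vertex in the style of Examples \ref{JRex1} and \ref{CJRex}: for an arrow $\alpha$ that creates a cycle in the support subquiver, condition $(i*)$ forces $X_\alpha = 0$ on every object of $\mathscr{C}_{\mathcal{Q},m}$, so such arrows contribute no commutative-square constraint, and the computation reduces to a tree-like situation. The Jordan partitions $\lambda^q$ then express the multiplicities $a_\rho$ through a linear system which I expect to be triangular with respect to the length of $\rho$, hence invertible over $\mathbb{Z}$.

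The main obstacle I anticipate lies in this last decoding step. Because $(i*)$ permits additional cycles in the support subquiver (so that distinct isomorphism classes in $\mathsf{rep}(\mathcal{Q}, \mathsf{GenJF}(X))$ can realize the same $\underline{\lambda}$, which is precisely why canonical Jordan recoverability fails here), one must argue that among all such realizations exactly one lies in $\mathscr{C}_{\mathcal{Q},m}$. Concretely, the delicate bookkeeping is to show that when several strings through $m$ share substrings away from $m$, the contributions to $\lambda^q$ still stratify by the half-string endpoints in a way that preserves triangularity. Overcoming this is what genuinely separates the proof of Jordan recoverability from the proof of canonical Jordan recoverability already established in Theorem \ref{main2}.
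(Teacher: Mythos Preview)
Your overall architecture is right, but there are two genuine gaps.

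First, your minusculeness argument is not valid under $(ii)(b)$: the unique maximal string through $m$ could, a priori, visit $m$ several times, so ``substrings of a single maximal string'' does not force a single visit. The paper does not try to get minusculeness from $(ii)$ at all; it gets it from $(i*)$ via the chain $(i*)\Rightarrow(o)\Rightarrow$ minuscule (Lemma~\ref{1imply0}). You should use $(i*)$ here, not $(ii)$.

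Second, and more seriously, your necessity argument when $(i*)$ fails is too optimistic. The recipe ``pair $M(\rho)\oplus M(\rho')$ against a balanced sum of shorter string modules'' does not produce a counter-example in general: once $m$ fails to be minuscule, or once bands through vertices near $m$ appear, the morphism combinatorics change and your two candidates need not have the same $\mathsf{GenJF}$. The paper handles this by a three-layer case analysis: (a) $m$ not minuscule (Lemma~\ref{stringstruct1}, Proposition~\ref{nonminresult}); (b) $m$ minuscule but $(o)$ fails, splitting further into a ``two half-strings meeting at a far vertex'' case and a ``band attached to a half-string'' case (Lemma~\ref{stringstruct2}, Proposition~\ref{not0}); (c) $(o)$ holds but $(i*)$ fails (Lemma~\ref{stringstruct3}, Proposition~\ref{not1*}). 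In each case a specific pair $X,Y$ is built by truncating the last arrow of one of the strings, and the equality of $\mathsf{GenJF}$ is checked via the graph-map basis. Your sketch collapses all of this into one pattern, which is where it would break.

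On the sufficiency side your idea is workable but the paper's route is cleaner and avoids the ``delicate bookkeeping'' you flag. Under $(i*)$ and $(ii)(a)$ the paper shows there is a \emph{total} order (the Brenner--Schr\"oer order) on the strings through $m$, compatible with the existence of nonzero morphisms, and uses it to exhibit a nilpotent endomorphism with a \emph{single} Jordan block at every vertex (Proposition~\ref{1*et2aGenJF}). Hence $\mathsf{GenJF}(X)=((\dim X_q))_{q\in Q_0}$, the indecomposables have linearly independent dimension vectors, and Jordan recoverability is immediate (Corollary~\ref{JFin1*and2}). This replaces your anticipated triangular inversion by a one-line argument. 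Under $(i*)$ and $(ii)(b)$ your reduction to a type $A$ subquiver is exactly what the paper does (Proposition~\ref{1*and2}).
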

	
	\subsection{Proof of Theorem \ref{main}}
	\label{sub:proof1}
	
	To begin with, we will prove the following little lemma.
	\begin{lemma} \label{1imply0} Let $m$ be a vertex of the quiver $\mathcal{Q}$. We have the following assertions:
		\begin{itemize}
			\item[•] If $m$ satisfies $(i)$ then $m$ satisfies $(i*)$;
			
			\item[•] If $m$ satisfies $(i*)$ then $m$ satisfies the following:
			\begin{itemize}
				\item[$(o)$] Any string passing through $m$ passes through any vertex of $\mathcal{Q}$ at most once.
			\end{itemize}
			\item[•] If $m$ satisfies $(o)$ then $m$ is minuscule.
		\end{itemize}
	\end{lemma}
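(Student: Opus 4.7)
The plan is to prove the three implications from bottom to top. The first two follow from direct combinatorial arguments using Proposition \ref{minstring} and the string definition, while the last requires a more delicate case analysis and will be the main obstacle.

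For $(o) \Rightarrow$ minusculeness, the argument is immediate from Proposition \ref{minstring}. Specializing $(o)$ to $v = m$ gives that any string through $m$ visits $m$ at most once; strings not through $m$ trivially visit $m$ zero times. Hence every string of $\mathcal{Q}$ visits $m$ at most once, so $m$ is minuscule.

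For $(i*) \Rightarrow (o)$, I would argue by contrapositive. Suppose a string $\rho$ passing through $m$ visits some vertex $v$ at least twice, and split $\rho$ at an occurrence of $m$ as $\rho = \rho^{+}\rho^{-}$, with $\rho^{-}$ ending at $m$ and $\rho^{+}$ starting at $m$. If $v = m$, the sub-walk of $\rho$ between two occurrences of $m$ is a non-lazy string from $m$ to $m$ which, together with the lazy path $e_m$, provides two distinct strings from $m$ with the same endpoint. If $v \neq m$ and both occurrences of $v$ lie inside $\rho^{+}$ (resp.\ $\rho^{-}$), truncating at each occurrence yields two strings from $m$ to $v$ of distinct lengths. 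If the two occurrences of $v$ straddle $m$, appropriate truncations of $\rho^{+}$ and of $(\rho^{-})^{-1}$ yield two strings from $m$ to $v$; these must be distinct, for otherwise their coinciding first letters would force $\rho$ to backtrack at $m$, contradicting the reducedness condition of Definition \ref{stringdef}. Each case contradicts $(i*)$.

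For $(i) \Rightarrow (i*)$, I would again proceed by contrapositive: from two distinct strings $\mu_1 \neq \mu_2$ both starting at $m$ and ending at a common vertex $v$, I would extract a witness $(\rho, \nu, \alpha)$ of the failure of $(i)$. The key idea is that some arrow $\gamma \in Q_1$ appears in exactly one of $\mu_1$, $\mu_2$; after possibly stripping a common suffix ending at $v$ to ensure the last letters of $\mu_1$ and $\mu_2$ differ, I may take $\gamma$ to be the arrow underlying the last letter of $\mu_1$. I would then set $\rho$ to be $\mu_1$ with its final letter removed (a string through $m$ ending at the other endpoint $w$ of $\gamma$) and $\nu = \mu_2$, possibly further truncated to exclude any additional occurrence of $\gamma$, and swap the roles of $\rho$ and $\nu$ if needed to match the source/target convention in the statement of $(i)$. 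Then $\gamma \notin \mathsf{Supp}_1(\rho) \cup \mathsf{Supp}_1(\nu)$ while the two endpoints $w$ and $v$ of $\gamma$ lie in $\mathsf{Supp}_0(\rho)$ and $\mathsf{Supp}_0(\nu)$ respectively. The main obstacle is the delicate case analysis needed to accommodate loops, arrows appearing repeatedly in the same string, and large overlaps between $\mu_1$ and $\mu_2$; these are handled by iterating the truncation on a shorter pair of distinct strings from $m$ to a common endpoint.
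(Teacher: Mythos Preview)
Your proposal is correct and follows the same contrapositive strategy as the paper for each implication. The only noteworthy difference is in $(i)\Rightarrow(i*)$: where you describe an iterative truncation, the paper picks at the outset a pair $(\rho_0,\nu_0)$ of distinct strings from $m$ with a common endpoint minimizing the sum of lengths, strips the last letter $\alpha^{\varepsilon}$ of the longer string $\nu_0=\alpha^{\varepsilon}\nu'$, and uses minimality to conclude directly that $\alpha\notin\mathsf{Supp}_1(\nu')\cup\mathsf{Supp}_1(\rho_0)$ --- this packages your iteration cleanly and sidesteps the case analysis on repeated arrows and loops that you flag as the main obstacle.
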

	\begin{remark}
		The condition $(o)$ prevents, in particular, the existence of bands passing through $m$.
	\end{remark}
	\begin{proof}[Proof]
		Let us prove first that $(i)$ implies $(i*)$ by proving the contrapositive. Let us assume that $m$ does not satisfy $(i*)$.
		
		By hypothesis, there exist $\rho$ and $\nu$ two distinct strings such that $s(\nu) = s(\rho) = m$ and $t(\nu) = t(\rho)$. In the set $S$ of such pairs of strings $(\rho, \nu)$, let us consider a pair $(\rho_0, \nu_0) \in S$ minimal with respect to the sum of the lengths of the strings.  Up to exchanging the role of $\rho_0$ and $\nu_0$, we can assume that $\ell(\rho_0) \leqslant \ell(\nu_0)$. 
		
		First we can note that $\nu_0$ cannot be a lazy path. Next, by construction of our strings, there exist an arrow $\alpha$, $\varepsilon \in \{\pm 1\}$ and a substring $\nu'$ such that $\nu_0 = \alpha^\varepsilon \nu'$ and $\alpha \notin \mathsf{Supp}_1(\nu') \cup \mathsf{Supp}_1(\rho_0)$; else we could construct a substring $\sigma$ of $\rho_0$ such that $(\sigma, \nu') \in S$ $-$ which leads us to a contradiction.
		
		Thus $\rho_0$ and $\nu'$ are two strings such that there is an arrow $\alpha$ which satisfies:
		\begin{itemize}
			\item[•] $s(\alpha^\varepsilon) = t(\nu')$, and so $s(\alpha^\varepsilon) \in \mathsf{Supp}_0(\nu')$;
			
			\item[•] $t(\alpha^\varepsilon) = t(\rho_0)$ and consequently $t(\alpha^\varepsilon) \in \mathsf{Supp}_0(\rho_0)$;
			
			\item[•] $\alpha \notin \mathsf{Supp}_1(\nu') \cup \mathsf{Supp}_1(\rho_0)$.
		\end{itemize}
		We can conclude that $m$ does not satisfy $(i)$.
		
		Secondly we show that $(i*)$ implies $(o)$ by proving the contrapositive. Let us assume that $m$ does not satisfy $(o)$.
		
		Then there exists a string $\mu$ passing through $m$ and passing through a vertex at least twice. Let $T$ be the set of such strings. We can consider a string $\mu_0$ minimal by inclusion in $T$.  By hypothesis, $\mu_0$ is not lazy and there is only one vertex in $Q_0$ which $\mu_0$ is passing through at least twice. Up to replacing $\mu_0$ by its inverse, we can affirm this vertex is $t(\mu_0)$ and $\mu_0$ is passing through it exactly twice. 
		
		Let us construct two distinct strings $\rho$ and $\nu$ such that $s(\rho) = s(\nu) = m$ and $t(\rho) = t(\nu)$ from $\mu_0$. We have to distinguish the following cases:
		\begin{itemize}
			\item[•] if $s(\mu_0) = m = t(\mu_0)$ : we can take $\rho = e_m$ and $\nu = \mu_0$;
			
			\item[•] if $s(\mu_0) = m \neq t(\mu_0)$: we can consider $\rho$ to be the substring of $\mu_0$ minimal by inclusion such that $s(\rho) = m$ and $t(\rho) = t(\mu_0)$, and $\nu = \mu_0$;
			
			\item[•] if $s(\mu_0) \neq m \neq t(\mu_0)$: hence $s(\mu_0) = t(\mu_0)$! That is why we can write $\mu_0 = \rho \nu^{-1}$ where $\rho$ and $\nu$ are two strings satisfying what we want.
		\end{itemize}
		Thus $m$ does not satisfy $(i*)$.
		
		Finally $(o)$ implies minusculeness by using Proposition \ref{minstring}.
	\end{proof}
	
	\begin{cor} \label{needmin}
		Theorem \ref{2ndmain2} implies that $m$ must be minuscule in order to get $\mathscr{C}_{\mathcal{Q},m}$  Jordan recoverable.
	\end{cor}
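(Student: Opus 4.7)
The plan is to combine the hypothesis of the corollary (Theorem \ref{2ndmain2}) with the chain of implications already established in Lemma \ref{1imply0}. The argument is a one-step deduction, so no genuine new content is required; I just need to thread the implications correctly.

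More concretely, I would start by assuming that $\mathscr{C}_{\mathcal{Q},m}$ is Jordan recoverable. Applying the "only if" direction of Theorem \ref{2ndmain2}, this forces the vertex $m$ to satisfy condition $(i*)$: any string starting at $m$ is uniquely determined by its endpoint. I would then feed $(i*)$ into the second bullet of Lemma \ref{1imply0} to get property $(o)$ $-$ namely, that any string passing through $m$ visits each vertex at most once. Finally, the third bullet of Lemma \ref{1imply0} (which rests on Proposition \ref{minstring}) upgrades $(o)$ to the statement that $m$ is minuscule.

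There is no real obstacle: the work has all been done in Lemma \ref{1imply0}, and the present corollary is just the observation that the Jordan recoverability hypothesis of Theorem \ref{2ndmain2} entrains the first link of that chain. The only thing worth double-checking is that we really use the forward direction of Theorem \ref{2ndmain2} (recoverability forces $(i*)$), not the converse; this is indeed what is needed. So the proof will amount to a short sentence: Jordan recoverability $\Rightarrow (i*) \Rightarrow (o) \Rightarrow m$ minuscule, citing Theorem \ref{2ndmain2} and Lemma \ref{1imply0}.
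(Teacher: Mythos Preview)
Your proposal is correct and matches the paper's intended argument: the corollary is stated immediately after Lemma \ref{1imply0} with no explicit proof, precisely because it follows at once from the chain $(i*) \Rightarrow (o) \Rightarrow$ minuscule established there, together with the forward direction of Theorem \ref{2ndmain2}.
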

	
	\begin{remark} \label{notsuff}
		We already saw that is not sufficient (see Remark \ref{CJRneed1} or Example \ref{minnot3}).
	\end{remark}
	Now let us prove the main theorem. First we show that these conditions are sufficient.
	\begin{prop} \label{1et2b}
		Let $m$ be a vertex of $\mathcal{Q}$ satisfying $(i)$ and $(ii)(b)$. Then $\mathscr{C}_{\mathcal{Q},m}$ is canonically Jordan recoverable.
	\end{prop}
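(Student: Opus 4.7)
The strategy will be to reduce the problem to the Dynkin type-$A$ case already covered by Theorem~\ref{GPT}. First, Lemma~\ref{1imply0} applied to $(i)$ shows that $m$ is minuscule, and in fact satisfies the stronger property $(o)$ that any string through $m$ visits every vertex of $\mathcal{Q}$ at most once; in particular no band passes through $m$. Under $(ii)(b)$, either no non-lazy string passes through $m$ (in which case $\mathscr{C}_{\mathcal{Q},m} = \mathsf{add}(M(e_m))$ is trivially canonically Jordan recoverable), or there is a unique maximal string $\mu$ of $\mathcal{Q}$ through $m$, and every string through $m$ is a substring of $\mu$ containing $m$. In the latter case, writing $\mu = \alpha_k^{\varepsilon_k}\ldots \alpha_1^{\varepsilon_1}$ with traversed vertices $v_0, \ldots, v_k$, the $v_i$ are pairwise distinct, so the indecomposable summands of any $X \in \mathscr{C}_{\mathcal{Q},m}$ are string representations $M(\rho)$ with $\rho$ a substring of $\mu$ containing $m$.

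Next I would introduce the subquiver $\mathcal{Q}' = (Q', I')$ defined by $Q'_0 = \mathsf{Supp}_0(\mu)$ and $Q'_1 = \mathsf{Supp}_1(\mu)$, with $I' := I \cap \mathbb{K} Q'$. Condition $(i)$ applied with $\rho = \nu = \mu$ states exactly that no arrow of $Q$ has both endpoints in $Q'_0$ without already lying in $Q'_1$, so $Q'$ is the full subquiver of $Q$ on $Q'_0$. Its underlying graph is a line carrying the zigzag orientation of $\mu$, hence of Dynkin type $A_{k+1}$. Any length-two composable path in $Q'$ arises from two consecutive same-direction arrows of $\mu$, and such a composition cannot lie in $I$ since $\mu$ is a string; thus $I' = 0$ and $\mathbb{K}\mathcal{Q}'$ is the path algebra of a type-$A$ Dynkin quiver.

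The final step is to check that for any $\underline{\lambda}$ supported on $Q'_0$ (which includes $\underline{\lambda} = \mathsf{GenJF}(X)$ for every $X \in \mathscr{C}_{\mathcal{Q},m}$) the varieties $\mathsf{rep}(\mathcal{Q}, \underline{\lambda})$ and $\mathsf{rep}(\mathcal{Q}', \underline{\lambda})$ coincide: the vector spaces outside $Q'_0$ vanish, so every $Y_\alpha$ with $\alpha \notin Q'_1$ is forced to be zero and any relation of $I$ involving such an arrow is automatic, while the internal length-two relations have just been ruled out. Under this identification $\mathscr{C}_{\mathcal{Q},m}$ matches $\mathscr{C}_{\mathcal{Q}',m}$ (both are generated by the string representations of the substrings of $\mu$ through $m$) and $\mathsf{GenJF}$ is preserved. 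Since every vertex of a type-$A$ Dynkin quiver is minuscule, Theorem~\ref{GPT} applies to $\mathscr{C}_{\mathcal{Q}',m}$, and transporting its dense open set of isomorphism witnesses across the variety identification yields canonical Jordan recoverability of $\mathscr{C}_{\mathcal{Q},m}$.

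The delicate step will be precisely this variety identification: one must check that conditions $(i)$ and $(ii)(b)$ together eliminate every arrow of $\mathcal{Q}$ and every relation of $I$ that could enlarge $\mathsf{rep}(\mathcal{Q}, \underline{\lambda})$ beyond $\mathsf{rep}(\mathcal{Q}', \underline{\lambda})$. Once the combinatorial reduction to the zigzag line subquiver is secured, the rest is essentially bookkeeping plus an invocation of the Dynkin case.
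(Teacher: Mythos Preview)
Your proposal is correct and follows essentially the same route as the paper: both arguments use $(i)$ to show that the full subquiver on the support of the unique maximal string $\mu$ through $m$ is a type-$A$ Dynkin quiver with no relations, observe that every arrow outside $\mathsf{Supp}_1(\mu)$ is forced to act by zero on representations with dimension vector supported on $\mathsf{Supp}_0(\mu)$, and then invoke Theorem~\ref{GPT}. Your write-up is in fact slightly more explicit than the paper's about why $I'=0$ and about the variety identification $\mathsf{rep}(\mathcal{Q},\underline{\lambda}) \cong \mathsf{rep}(\mathcal{Q}',\underline{\lambda})$, but the underlying idea is identical.
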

	
	\begin{proof}[Proof]
		Let $m$ be as supposed. By $(i)$, which implies $(o)$ by the previous lemma, and the finiteness of $\mathcal{Q}$, there exists at least one string maximal by inclusion in the set of strings that pass through $m$. By $(ii)(b)$, it is unique. 
		
		Let $\rho$ be the obtained string. If $\rho$ is $e_m$, it means that $\mathcal{Q} \cong A_1$ and we conclude easily. If $\ell(\rho) > 0$, we can consider the (gentle) quiver $\mathcal{R} = (R,J)$ with:
		\begin{itemize}
			\item[•] $R_0 = \mathsf{Supp}_0(\rho)$ and $R_1 =  \mathsf{Supp}_1(\rho)$;
			
			\item[•] $\sigma, \tau : R_1 \longrightarrow R_0$ the source and target functions which coincide with $s$ and $t$ on $R_1 \subset Q_1$;
			
			\item[•] $J = \{0\}$.
		\end{itemize}
		By construction $\mathcal{R}$ is a $A_n$-Dynkin type quiver (where $n = \ell(\rho)+1$) and by $(i)$, for any arrow $\alpha \in Q_1 \setminus \mathsf{Supp}_1(\rho)$, we get $s(\alpha) \notin \mathsf{Supp}_0(\rho)$ or $t(\alpha) \notin \mathsf{Supp}_0(\rho)$, and therefore, for any $X \in \mathscr{C}_{\mathcal{Q},m }$,  $X_{s(\alpha)} = 0$ or $X_{t(\alpha)} = 0$. So $X_\alpha$ is a zero-morphism. Thus any $Y \in \mathsf{rep}(\mathcal{Q})$ such that $\vdim(Y) = \vdim(X)$ has to satisfy $Y_\alpha = 0$. Such an observation allows us to say that we only have to focus on what is going on in $\mathcal{R}$.
		
		Then to know if $\mathscr{C}_{\mathcal{Q},m}$ is canonically Jordan recoverable amounts to knowing if $\mathscr{C}_{\mathcal{R},m}$ is. By Theorem \ref{GPT}, we can conclude that $\mathscr{C}_{\mathcal{Q},m}$ is canonically Jordan recoverable.
	\end{proof}
	\begin{lemma} \label{stringstruct1*et2a}
		Let $m$ be a vertex of $\mathcal{Q}$ satisfying $(ii)(a)$. Let $\Sigma_\mathcal{Q}(m)$ be the set of all the strings of $\mathcal{Q}$ that pass through $m$. Then for any string $\rho \in \Sigma_\mathcal{Q}(m)$, $s(\rho) = m$ or $s(\rho^{-1}) = m$.
	\end{lemma}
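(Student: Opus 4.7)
The plan is to prove this by contradiction: assume there exists $\rho \in \Sigma_{\mathcal{Q}}(m)$ such that $m \notin \{s(\rho), t(\rho)\}$, i.e., $m$ appears strictly in the interior of $\rho$, and derive a contradiction using $(ii)(a)$ together with the definition of a string (Definition \ref{stringdef}).

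Writing $\rho = \alpha_k^{\varepsilon_k} \ldots \alpha_1^{\varepsilon_1}$, the assumption that $m$ is interior gives some index $1 \leqslant i \leqslant k-1$ with $t(\alpha_i^{\varepsilon_i}) = m = s(\alpha_{i+1}^{\varepsilon_{i+1}})$. I would then split into four cases according to the pair $(\varepsilon_i, \varepsilon_{i+1}) \in \{\pm 1\}^2$:

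\textbf{Case $(+,+)$:} Here $t(\alpha_i) = m = s(\alpha_{i+1})$, so in the notation of $(ii)(a)$, $\alpha_i$ plays the role of $\beta$ and $\alpha_{i+1}$ plays the role of $\alpha$. Both exist, so $(ii)(a)$ forces $\alpha_{i+1} \alpha_i \in I$, contradicting the second bullet of Definition \ref{stringdef}.
\textbf{Case $(-,-)$:} Here $s(\alpha_i) = m = t(\alpha_{i+1})$; symmetrically $\alpha_i$ is the unique arrow out of $m$ and $\alpha_{i+1}$ the unique arrow into $m$, so $(ii)(a)$ forces $\alpha_i \alpha_{i+1} \in I$, again contradicting the string condition.
\textbf{Case $(+,-)$:} Here $t(\alpha_i) = m = t(\alpha_{i+1})$. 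Since $(ii)(a)$ allows at most one arrow with target $m$, we get $\alpha_i = \alpha_{i+1}$, contradicting the reducedness requirement (third bullet of Definition \ref{stringdef}).
\textbf{Case $(-,+)$:} Here $s(\alpha_i) = m = s(\alpha_{i+1})$, and the uniqueness of arrows out of $m$ in $(ii)(a)$ forces $\alpha_i = \alpha_{i+1}$, again contradicting reducedness.

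All four cases are impossible, so $m$ must be either $s(\rho)$ or $t(\rho) = s(\rho^{-1})$, which is exactly the claim. There is no real obstacle here; the argument is a straightforward enumeration. The only subtlety to be careful about is matching the signs $\varepsilon_i, \varepsilon_{i+1}$ with the correct interpretation of which arrow of $\mathcal{Q}$ starts or ends at $m$, and ensuring that each of the four cases really invokes the appropriate portion of $(ii)(a)$ or of Definition \ref{stringdef}.
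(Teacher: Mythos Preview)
Your proof is correct and is precisely the argument the paper has in mind; the paper's own proof consists of the single sentence ``This follows immediately from the hypotheses,'' and your four-case analysis is exactly how one unpacks that immediacy using Definition~\ref{stringdef} and condition $(ii)(a)$.
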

	\begin{proof}[Proof]
		This follows immediately from the hypotheses.
	\end{proof}
	\begin{prop} \label{1*et2aGenJF}
		Let $m$ be a vertex of $\mathcal{Q}$ satisfying $(i*)$ and $(ii)(a)$. Then for any $X \in \mathscr{C}_{\mathcal{Q},m},\ \mathsf{GenJF}(X) = ((\dim X_q))_{q \in Q_0}$. That is to say, the Jordan form of a generic nilpotent endomorphism of $X$ at each vertex consists of a single block.
	\end{prop}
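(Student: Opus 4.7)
The plan is to exhibit an explicit $N \in \NEnd(X)$ whose Jordan form at every vertex is a single Jordan block of size $\dim X_q$. Since the single-block partition is the unique maximum among partitions of a given integer in dominance order, and $\GenJF(X)$ is the maximum of $\JF$ on $\NEnd(X)$ by Theorem~\ref{GPT3}, this will force $\GenJF(X) = ((\dim X_q))_{q \in Q_0}$.

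Using $(i*)$ and $(ii)(a)$ together with Lemma~\ref{1imply0} (which gives $(o)$ and minusculeness) and Lemma~\ref{stringstruct1*et2a}, I would first observe that every string through $m$ starts at $m$ and is uniquely determined by its endpoint. Hence the indecomposable representations in $\mathscr{C}_{\mathcal{Q},m}$ are exactly the string modules $M_q := M(\rho^{(q)})$ for $q$ in the set $R \subseteq Q_0$ of reachable vertices, and we may decompose $X = \bigoplus_{q \in R} M_q^{n_q}$. The prefix relation on the $\rho^{(q)}$ organizes $R$ into a rooted tree $T_m$ at $m$. The gentle conditions applied at the last step of each $\rho^{(q)}$ (and $(ii)(a)$ at $q = m$) then ensure that every $q \in R$ has at most two children in $T_m$: at most one \emph{forward} child $q_+$, reached by an outgoing arrow $\gamma_q : q \to q_+$, and at most one \emph{backward} child $q_-$, reached by an incoming arrow $\delta_q : q_- \to q$. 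Any arrow of $Q$ absent from the union of the $\rho^{(q)}$ acts as zero on every $M_q$, so commutativity of $N$ with such arrows is automatic.

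I would then define $N$ recursively along $T_m$. At each $q \in R$, I would fix a basis of $X_q$ ordered in three blocks: first, the basis of the $q_-$-subtree portion of $X_q$ (transported from $X_{q_-}$ through $X_{\delta_q}$ in the order already chosen at $q_-$); next, the $n_q$ copies of the basis vector of $M_q$ at $q$; finally, the basis of the $q_+$-subtree portion of $X_q$ (identified with the ordered basis of $X_{q_+}$ through $X_{\gamma_q}$). Set $N_q$ to be the cyclic shift along this ordered basis: the first vector maps to $0$ and each subsequent vector maps to its predecessor. By construction $N_q$ is a single Jordan block of size $\dim X_q$.

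The crux is checking that $N$ commutes with the remaining arrow maps. For a backward arrow $\delta_q$, the image $\Im(X_{\delta_q}) \subset X_q$ is exactly the first block of the chosen order at $q$, and the shift there matches $N_{q_-}$ under $X_{\delta_q}$, yielding $N_q \circ X_{\delta_q} = X_{\delta_q} \circ N_{q_-}$. For a forward arrow $\gamma_q$, the kernel $\Ker(X_{\gamma_q})$ is the union of the first two blocks, while $X_{\gamma_q}$ identifies the third block with the ordered basis of $X_{q_+}$, yielding $N_{q_+} \circ X_{\gamma_q} = X_{\gamma_q} \circ N_q$. The main obstacle---and the reason for placing the $q_-$-subtree first, $M_q^{n_q}$ in the middle, and the $q_+$-subtree last---is that when $q$ has both children, $N_q$ must simultaneously preserve $\Im(X_{\delta_q})$ and $\Ker(X_{\gamma_q})$; only with this order does the inclusion $\Im(X_{\delta_q}) \subseteq \Ker(X_{\gamma_q})$ realize as nested initial segments of the cyclic shift, which is exactly what a single Jordan block on $X_q$ requires.
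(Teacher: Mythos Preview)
Your proof is correct and is essentially the paper's argument in different packaging: the paper puts the Brenner total order on $\Sigma_{\mathcal{Q}}(m)$, observes that at each vertex $q$ the supported strings form a consecutive interval in this order, and defines $N_q$ as the shift along that interval; your recursive rule ($q_-$-subtree, then $M_q^{n_q}$, then $q_+$-subtree) is precisely the in-order traversal of $T_m$, which coincides with the Brenner order. Both constructions produce the same nilpotent endomorphism $N$, and the paper's detour through Proposition~\ref{morph} to relate the order to morphisms is replaced in your version by the direct observation that $\Im(X_{\delta_q})$ and $\Ker(X_{\gamma_q})$ are nested initial segments.
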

	\begin{proof}[Proof]
		Let $m$ be as supposed. By the previous lemma, we get that any string of $\Sigma_{\mathcal{Q}}(m)$, up to inverse, starts at $m$ and can be identified by its ending vertex.
		
		Now we will define a total order on $\Sigma_\mathcal{Q}(m)$. This order was first introduced by Sheila Brenner \cite{B86} and then reformulated and used in particular by Jan Schroër in his thesis \cite{S98}. Let $\mu = \alpha_k^{\varepsilon_k} \ldots \alpha_1^{\varepsilon_1} $ and $\nu = \beta_p^{\xi_p} \ldots \beta_1^{\xi_1}$ be two strings of $\Sigma_\mathcal{Q}(m)$. If $\alpha_1 = \beta_1$, let $j$ be the maximal integer such that $\alpha_i = \beta_i$ for all $1 \leqslant i \leqslant j$; otherwise put $j=0$. We will say that $\mu \leqslant  \nu$ if and only if:
		\begin{itemize}
			\item[•] if $j < \min(k,p)$ then $\varepsilon_{j+1} = - 1$ and $\xi_{j+1} = 1$;
			
			\item[•] if $j = k < p$ then $\xi_{j+1} = 1$;
			
			\item[•] if $j = p < k$, then $\varepsilon_{j+1} = -1$;
			
			\item[•] or if $j = p = k$.
		\end{itemize}
		
		We can have in mind the following drawing.
		\begin{center}
			$\displaystyle \xymatrix@R=1em @C=1em{
				& & & \mathbf{\mu} & & \ar@<0.7ex> [lldd]^{\alpha_{j+1}} \ar@<0.6ex> [lldd]  \ar@<0.5ex> [lldd] \bullet \ar@<0.1ex>@{~}[rrr] \ar@<-0.1ex>@{~}[rrr] \ar@{~}[rrr]  & & & & &\\
				& & & & & & & & & & \\
				m \ar@<0.6ex> @{~}[rrr] \ar@<0.7ex> @{~}[rrr]^{\alpha_j^{\varepsilon_j} \cdots \alpha_1^{\varepsilon_1}}  \ar@<0.5ex> @{~}[rrr] \ar@<-0.5ex> @{~} [rrr] & & & \bullet  \ar@<-0.5ex> [rrdd]^{\beta_{j+1}} & & & & & & & \mathbf{\mu} \leqslant \alpha_j^{\varepsilon_j} \cdots \alpha_1^{\varepsilon_1} \leqslant \nu \\
				& & & & & & & & & &\\
				& & & \nu & & \bullet \ar@{~}[rrr] & & & & & } $
		\end{center}
		We can easily check that we have a total order relation. 
		
		By Theorem \ref{BR}, any string of $\Sigma_{\mathcal{Q}}(m)$ corresponds to an indecomposable representation of $\mathscr{C}_{\mathcal{Q},m}$, and vice-versa. Moreover, by $(i*)$, for any pair of strings $\mu, \nu \in \Sigma_{\mathcal{Q}}(m)$ such that $\mu \leqslant \nu$, there is a unique common substring $\alpha_j^{\varepsilon_j} \cdots \alpha_1^{\varepsilon_1}$ which is at the bottom of $\mu$ and on the top of $\nu$. 
		
		Such observations allow us to define a relation $\leqslant$ on indecomposable representations of $\mathscr{C}_{\mathcal{Q},m}$ as follows: for any pair of indecomposable representations $X,Y \in \mathscr{C}_{\mathcal{Q},m}$, we have $X \leqslant Y$ if and only if there exists a non-zero morphism from $Y$ to $X$. Then $\mu \leqslant \nu $ if and only if $M(\mu) \leqslant M(\nu)$ as the only non-zero morphism (up to a scalar) from $M(\nu)$ to $M(\mu)$ corresponds to the substring $\alpha_j^{\varepsilon_j} \cdots \alpha_1^{\varepsilon_1}$, by Proposition \ref{morph}. Indeed we note that $\alpha_j^{\varepsilon_j} \cdots \alpha_1^{\varepsilon_1}$ is the unique common substring, maximal by inclusion, of $\mu$ and $\nu$ as a direct consequence of Lemma \ref{stringstruct1*et2a}. As a result, we translate the total order on strings of $\Sigma_\mathcal{Q}(m)$ to a total order on indecomposable representations of $\mathscr{C}_{\mathcal{Q},m}$.
		
		By $(i*)$, $\Sigma_\mathcal{Q}(m)$ is a finite set and so is the isomorphism classes of indecomposable representations in $\mathscr{C}_{\mathcal{Q},m}$.  Let $X(d) \leqslant \ldots \leqslant X(1)$ be the indecomposable representations contained in $\mathscr{C}_{\mathcal{Q},m}$. Consider a decomposition of $X = ((X_q)_{q \in Q_0}, (X_\alpha)_{\alpha \in Q_1}) \in \mathscr{C}_{\mathcal{Q},m}$ as a sum of string modules 
		\begin{center}
			$\displaystyle X = \bigoplus_{i=1}^d X(i)^{\delta_i}$
		\end{center}
		with $\delta_i \in \mathbb{Z}_{\geqslant 0}$ for $1 \leqslant i \leqslant d$. We choose a basis for each $X_q$ adapted to this decomposition, so that the maps $X_\alpha$ are given by matrices with at most one non-zero entry in each row or column, and all the non-zero entries are equal to $1$. Note that this implies that the chosen basis of each $X_q$ can be identified with a subset of the basis of $X_m$.
		
		We now define an order on the basis of $X_m$ (and thus by restriction on the basis of each $X_q$). Each basis element belongs to a particular string in the string decomposition. Then we fix a total order extending the total order $\leqslant$ we already defined on strings. In other words, the basis elements of $X_m$  corresponding to $X(i)$ are $x_{\delta_1 + \ldots + \delta_{i-1} + 1}, \ldots, x_{\delta_1 + \ldots + \delta_{i-1} + \delta_i}$. By our definition of the order $\leqslant$, we observe that the basis of $X_q$ consists of  $x_{\delta_1+\ldots+\delta_{i-1}+ 1}, \ldots, x_{\delta_1 + \ldots + \delta_j}$ for some $1 \leqslant i \leqslant j \leqslant d$. 
		
		Let us consider the following linear transformations at each vertex $q$, inspired by Example \ref{JRex1}:
		\begin{center}
			$N_q(x_k) = \left\{ 
			\begin{matrix}
				x_{k-1} & \text{ if } \delta_1 + \cdots + \delta_{i-1} + 2 \leqslant k \leqslant \delta_1 + \cdots + \delta_{j} \hfill \\
				0 & \text{otherwise.} \hfill
			\end{matrix}
			\right.$
		\end{center}
		and we define $N_q = 0$ if $X_q = 0$.
		
		By construction, if $N = (N_q)$ is a well-defined endomorphism, then it is nilpotent. Let us check that $N$ is an endomorphism of $X$. So we want to prove that for any $\alpha \in Q_1$,
		\begin{center}
			$(\diamond): X_\alpha N_{s(\alpha)} = N_{t(\alpha)} X_\alpha$ 
		\end{center}
		Let $\alpha \in Q_1$. We can note that if $X_{s(\alpha)} = 0$ or $X_{t(\alpha)} = 0$ then $(\diamond)$ is automatically satisfied. Suppose that $X_{s(\alpha)} \neq 0$ and $X_{t(\alpha)} \neq 0$. So there is a pair of strings $\mu,\nu \in \Sigma_{\mathcal{Q}}(m)$ such that $t(\mu) = s(\alpha)$ and $t(\nu) = t(\alpha)$. As any string of $\Sigma_{\mathcal{Q}}(m)$ is characterized by its ending vertex, then we have either $(A) : \mu = \alpha^{-1} \nu$  or $(B) : \nu = \alpha \mu$.
		\begin{center}
			\begin{tikzpicture}[>= angle 60,scale=1.25]
				\node (1) at (0,0){$m$};
				\node (2) at (2,0){$t(\alpha)$};
				\node (3) at (3.5,0){$s(\alpha)$};
				\draw[<-] ([yshift=1.5mm]2.east)-- node[above]{$\alpha$}([yshift=1.5mm]3.west);
				\draw[-,decorate, decoration={snake,amplitude=.4mm}] ([yshift=1.5mm]1.east)--([yshift=1.5mm]2.west);
				\draw[-,line width=0.7mm,decorate, decoration={snake,amplitude=.4mm}] ([yshift=-1.5mm]1.east)--([yshift=-1.5mm]2.west);
				\draw[-,line width=0.7mm,decorate,decoration={snake,amplitude=.4mm}] ([yshift=-1.5mm]2.east)--([yshift=-1.5mm]3.west);
				\draw[decorate,decoration={calligraphic brace, amplitude=3pt}]
				(-.1, .3) -- (2.1,.3) node[midway,auto]{$\nu$}; 
				\draw[decorate,decoration={calligraphic brace, amplitude=3pt,mirror}]
				(-.1, -.3) -- (3.6,-.3) node[midway,auto,swap]{$\mu$};
				\node at (1.75,1){$(A)$};
				\begin{scope}[xshift = 5cm]
					\node (1) at (0,0){$m$};
					\node (2) at (2,0){$s(\alpha)$};
					\node (3) at (3.5,0){$t(\alpha)$};
					\draw[->] ([yshift=1.5mm]2.east)-- node[above]{$\alpha$}([yshift=1.5mm]3.west);
					\draw[-,line width=0.7mm,decorate, decoration={snake,amplitude=.4mm}] ([yshift=1.5mm]1.east)--([yshift=1.5mm]2.west);
					\draw[-,decorate, decoration={snake,amplitude=.4mm}] ([yshift=-1.5mm]1.east)--([yshift=-1.5mm]2.west);
					\draw[-,decorate,decoration={snake,amplitude=.4mm}] ([yshift=-1.5mm]2.east)--([yshift=-1.5mm]3.west);
					\draw[decorate,decoration={calligraphic brace, amplitude=3pt}]
					(-.1, .3) -- (2.1,.3) node[midway,auto]{$\mu$}; 
					\draw[decorate,decoration={calligraphic brace, amplitude=3pt,mirror}]
					(-.1, -.3) -- (3.6,-.3) node[midway,auto,swap]{$\nu$};
					\node at (1.75,1){$(B)$};
				\end{scope}
			\end{tikzpicture}
		\end{center}
		In either configuration we get $\mu \leqslant \nu$. Hence, as strings correspond to indecomposable representations, if $X_{s(\alpha)}$ is generated by $(x_{\delta_1 + \cdots + \delta_{i_1} + 1}, \ldots, x_{\delta_1 + \cdots + \delta_{j_1}})$ and if $X_{t(\alpha)}$ is generated by $(x_{\delta_1 + \cdots + \delta_{i_2} + 1}, \ldots, x_{\delta_1 + \cdots + \delta_{j_2}})$, then :
		\begin{itemize}
			\item[•] if we are in the configuration $(A)$: $i_1 = i_2 < j_1 < j_2$;
			
			\item[•] otherwise in the configuration $(B)$: $i_1 < i_2 < j_1 = j_2$.
		\end{itemize}
		We easily check in each configuration, the square corresponding to the arrow $\alpha$ is commutative. Consequently we get $(\diamond)$ for any $\alpha$, and we conclude $N$ is a nilpotent endomorphism.
		
		By construction, we easily get that $\mathsf{JF}(N) = ((\dim(X_q)))_{q \in Q_0}$. This $\#Q_0$-tuple of partitions is the maximal one (in the sense of $\unlhd$ order) that we can attain among all the Jordan forms we can have from any $N' \in \mathsf{NEnd}(X)$. Hence $\mathsf{GenJF}(X) = ((\dim(X_q)))_{q \in Q_0}$ as claimed (see Theorem \ref{GPT3} or \cite[Section 2.2]{GPT19}).
	\end{proof}
	The corollary below will be useful to prove Theorem \ref{2ndmain}.
	\begin{cor} \label{JFin1*and2}
		Let $m$ be a vertex of $\mathcal{Q}$ satisfying $(i*)$ and $(ii)(a)$. Then $\mathscr{C}_{\mathcal{Q},m}$ is Jordan recoverable.
	\end{cor}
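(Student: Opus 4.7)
The plan is to leverage Proposition \ref{1*et2aGenJF}, which under $(i*)$ and $(ii)(a)$ guarantees that $\mathsf{GenJF}(X)$ consists of a single Jordan block at each vertex, of size $\dim X_q$. In particular $\mathsf{GenJF}(X)$ carries the same information as the dimension vector $\underline{\dim}(X)$. It therefore suffices to show that any $X \in \mathscr{C}_{\mathcal{Q},m}$ is determined up to isomorphism by $\underline{\dim}(X)$, equivalently that the dimension vectors of the indecomposables generating $\mathscr{C}_{\mathcal{Q},m}$ are linearly independent.

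To obtain this I would classify those indecomposables via their strings. By Lemma \ref{stringstruct1*et2a} every string through $m$ may be assumed to start at $m$, and by $(i*)$ it is then determined by its endpoint. Writing $\alpha$ for the (possibly nonexistent) unique arrow with $s(\alpha)=m$ and $\beta$ for the unique arrow with $t(\beta)=m$, any non-lazy string starting at $m$ must begin either with $\alpha$ or with $\beta^{-1}$, so there are at most two maximal strings starting at $m$. Denote the vertex sequences they successively traverse by $m = v_0, v_1, \ldots, v_p$ and $m = w_0, w_1, \ldots, w_r$; each sequence consists of distinct vertices because $m$ is minuscule (Lemma \ref{1imply0}).

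The key observation, and essentially the only step that genuinely uses $(i*)$, is that the two sequences intersect only at $m$: any common vertex $q \neq m$ would yield two distinct strings starting at $m$ with the same endpoint $q$, contradicting $(i*)$. Consequently the indecomposables in $\mathscr{C}_{\mathcal{Q},m}$ split into $M_0, M_1, \ldots, M_p$ and $N_1, \ldots, N_r$, where $M_i$ (resp.\ $N_j$) is the string module of the initial substring ending at $v_i$ (resp.\ $w_j$); their dimension vectors are the indicator vectors of the nested sets $\{v_0,\ldots,v_i\}$ and $\{w_0,\ldots,w_j\}$ sitting along two disjoint arms joined at $m$. For $X = \bigoplus_{i=0}^p M_i^{a_i} \oplus \bigoplus_{j=1}^r N_j^{b_j}$ the multiplicities can then be recovered from the tips inward: $a_p = \dim X_{v_p}$, next $a_{p-1} = \dim X_{v_{p-1}} - a_p$, and so on along each arm, finishing by extracting $a_0$ from $\dim X_m$ minus the sum of the other multiplicities. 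This yields linear independence of the dimension vectors and concludes the argument; the only conceptual point, the disjointness of the two arms emanating from $m$, is precisely what $(i*)$ provides.
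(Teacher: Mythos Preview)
Your argument is correct and follows the same route as the paper: invoke Proposition \ref{1*et2aGenJF} to reduce to recovering $X$ from $\underline{\dim}(X)$, and then observe that the dimension vectors of the indecomposables in $\mathscr{C}_{\mathcal{Q},m}$ are linearly independent. The paper's proof is a one-liner that simply asserts this linear independence; you have spelled out the underlying reason (the two disjoint arms emanating from $m$, with nested supports along each), which is exactly the structure established in the proof of Proposition \ref{1*et2aGenJF}.
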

	\begin{proof}[Proof]
		By previous proposition, the dimension vectors of the indecomposable representations of $\mathscr{C}_{\mathcal{Q},m}$ are linearly independent. So we get for free that $\mathscr{C}_{\mathcal{Q},m}$ is Jordan recoverable.
	\end{proof}
	Here we want to establish a stronger result.
	\begin{lemma} \label{distancetom}
		Let $m$ be a vertex of $\mathcal{Q}$ satisfying $(i*)$ and $(ii)(a)$. Define $\Delta : Q_0 \longrightarrow \mathbb{Z}_{\geqslant 0} \cup \{\infty\}$ such that for any $q \in Q_0$:
		\begin{center}
			$\displaystyle \Delta(q) = \left\{ \begin{matrix}
				k & \text{if there exists a string } \rho \text{ such that } \ell(\rho) = k,\ s(\rho) = m \text{ and } t(\rho) = q \\
				\infty & \text{otherwise } \hfill
			\end{matrix} \right.$
		\end{center}
		Then $\Delta$ is well-defined. Moreover for any pair of arrows $\alpha, \beta$ such that $\beta \alpha \in I$ and 
		\begin{equation} 
			\Delta(s(\beta)) \leqslant \min(\Delta(t(\beta)), \Delta(s(\alpha))), \tag{$\otimes$}
		\end{equation} and for any $X \in \mathscr{C}_{\mathcal{Q},m}$, we have: 
		\begin{equation}
			\dim X_{s(\beta)}  \geqslant \dim X_{t(\beta)} + \dim X_{s(\alpha)}. \tag{$\circledast$}
		\end{equation}
	\end{lemma}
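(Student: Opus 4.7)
The well-definedness of $\Delta$ is immediate from $(i*)$: any two strings with source $m$ and the same target must coincide, so the length appearing in the definition is unambiguous whenever finite. For the main inequality $(\circledast)$, the plan is as follows. If $\Delta(s(\beta)) = \infty$, then $(\otimes)$ forces $\Delta(t(\beta)) = \Delta(s(\alpha)) = \infty$ as well, so no string in $\Sigma_{\mathcal{Q}}(m)$ visits any of the three vertices and $(\circledast)$ reads $0 \geqslant 0$. Hence I will assume $\Delta(s(\beta)) < \infty$ and denote by $\tau$ the unique string from $m$ to $s(\beta)$.

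Next I would decompose $X = \bigoplus_i M(\rho_i)^{\delta_i}$ as a direct sum of string modules. By Lemma \ref{stringstruct1*et2a}, each $\rho_i$ may be taken to start at $m$; by the implication $(i*)\Rightarrow (o)$ from Lemma \ref{1imply0}, each $\dim M(\rho_i)_q$ lies in $\{0,1\}$. Since both sides of $(\circledast)$ are linear in the multiplicities $\delta_i$, it suffices to prove three pointwise statements for every string $\rho$ with $s(\rho)=m$: $(B)$ $t(\beta)\in\mathsf{Supp}_0(\rho)$ implies $s(\beta)\in\mathsf{Supp}_0(\rho)$; $(C)$ $s(\alpha)\in\mathsf{Supp}_0(\rho)$ implies $s(\beta)\in\mathsf{Supp}_0(\rho)$; and $(D)$ $t(\beta)$ and $s(\alpha)$ cannot both lie in $\mathsf{Supp}_0(\rho)$ (otherwise the right-hand side would be $2$ while minusculeness caps the left side at $1$).

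The crux is to show that $\tau$ admits both extensions $\beta\tau$ and $\alpha^{-1}\tau$ as valid strings. The case $\tau=e_m$ is immediate, so assume $\tau$ has last letter $\eta_k^{\varepsilon_k}$. The gentleness axioms identify $\alpha$ as the unique incoming arrow at $s(\beta)$ whose composition with $\beta$ lies in $I$, and $\beta$ as the unique outgoing arrow at $s(\beta)$ whose composition with $\alpha$ lies in $I$. Running through the four sub-cases determined by the sign $\varepsilon_k$ and the identity of $\eta_k$ among the (at most) two incoming or outgoing arrows at $s(\beta)$, and invoking either the reducedness clause or the ideal clause from Definition \ref{stringdef}, I would conclude that the only possible obstructions are $\tau$ ending in $\alpha$ (which would force $\Delta(s(\alpha))\leqslant \Delta(s(\beta))-1$) or $\tau$ ending in $\beta^{-1}$ (which would force $\Delta(t(\beta))\leqslant \Delta(s(\beta))-1$); both outcomes are excluded by $(\otimes)$. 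Thus both extensions are legal, and the uniqueness clause of $(i*)$ identifies $\beta\tau$ with the unique string from $m$ to $t(\beta)$ and $\alpha^{-1}\tau$ with the unique string from $m$ to $s(\alpha)$. Both pass through $s(\beta)$, which yields $(B)$ and $(C)$.

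For $(D)$, the strings $\beta\tau$ and $\alpha^{-1}\tau$ share the length $\Delta(s(\beta))+1$; if some $\rho$ passed through both $t(\beta)$ and $s(\alpha)$, then the prefix of $\rho$ of that length would, by the uniqueness clause of $(i*)$, coincide with each of them simultaneously, which is impossible since $\beta$ and $\alpha^{-1}$ are distinct symbols in the alphabet of strings (and this rules out the degenerate case $t(\beta)=s(\alpha)$ as well). The principal technical difficulty I anticipate is the case analysis behind the extension argument, which must interleave the reducedness and ideal conditions across the $(\varepsilon_k,\eta_k)$ configurations while pinpointing precisely which of $\Delta(s(\alpha))$ or $\Delta(t(\beta))$ becomes the obstruction. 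With that analysis in place, summing the pointwise assertions $(B)$, $(C)$, $(D)$ over the indecomposable summands of $X$ delivers $(\circledast)$.
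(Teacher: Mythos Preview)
Your argument is correct and follows essentially the same route as the paper: reduce to the case $\Delta(s(\beta))<\infty$, use $(\otimes)$ together with gentleness to show that the unique string $\tau$ from $m$ to $s(\beta)$ extends to both $\beta\tau$ and $\alpha^{-1}\tau$, and then use $(i*)$ to deduce the containment and disjointness of the relevant sets of strings before summing over indecomposable summands. The only differences are cosmetic: the paper phrases your $(B)$, $(C)$, $(D)$ as the set-theoretic statements $\pi_m(s(\alpha))\cup\pi_m(t(\beta))\subseteq\pi_m(q)$ and $\pi_m(s(\alpha))\cap\pi_m(t(\beta))=\varnothing$, and it dispatches the extension step with a one-line appeal to gentleness rather than the explicit four-case analysis you outline.
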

	\begin{proof}[Proof]
		Let $m$ be as supposed. As $m$ satisfies $(i*)$ by Lemma \ref{1imply0}, then any string of $\Sigma_\mathcal{Q}(m)$, up to inverse, starts at $m$ and is characterized by its ending vertex by Lemma \ref{stringstruct1*et2a}. So $\Delta$ is well-defined.
		
		Let $\alpha, \beta \in Q_1$ such that $\beta \alpha \in I$ and $(\otimes)$. Put $q = s(\beta) = t(\alpha)$. Obviously if $\Delta(q) = \infty$ then for any representation $X$ in $\mathscr{C}_{\mathcal{Q},m}$ we have $X_q = X_{s(\alpha)} = X_{t(\beta)} = 0$ and we conclude $\circledast$.
		
		Assume that $\Delta(q) < \infty$. There exists a string $\rho_q \in \Sigma_\mathcal{Q}(m)$ unique by $(i*)$ such that $t(\rho_q) = q$. Note that $s(\alpha)$ and $t(\beta)$ are not in $\mathsf{Supp}_0(\rho_q)$ by $(\otimes)$. Then $\rho_q$ does not end by $\alpha$ or $\beta^{-1}$.
		
		By gentleness of $\mathcal{Q}$, $\beta \rho_q$ and $\alpha^{-1} \rho_q$ are strings of $\Sigma_\mathcal{Q}(m)$. Thus $\Delta(s(\alpha)) = \Delta(t(\beta)) = \Delta(q) + 1$. 
		\begin{center}
			$\displaystyle \xymatrix@R=1em @C=1em{
				& & & & & t(\beta) \\
				& & & & & \\
				m \ar@{~}[rrr]^{\rho_q} & & & q \ar[rruu]^\beta \ar@{<-}[rrdd]_\alpha & & \\
				& & & & \ar@/_1pc/@{--}[uu] & \\
				& & & & & s(\alpha)}$
		\end{center}
		Moreover, by $(i*)$ we can assert that:
		\begin{itemize}
			\item[•] there is no string $\nu \in \Sigma_\mathcal{Q}(m)$ such that $s(\alpha)$ and  $t(\beta)$ are both in $\mathsf{Supp}_0(\nu)$: if such a string exists, the minimal substring in $\Sigma_{\mathcal{Q}}(m)$ passing through $s(\alpha)$ and $t(\beta)$ must end at  $s(\alpha)$, or $t(\beta)$, and this raises a contradiction with $(i*)$;
			
			\item[•] For any string $\mu$ such that  $t(\beta) \in \mathsf{Supp}_0(\mu)$ or $s(\alpha) \in \mathsf{Supp}_0(\mu)$, then $q \in \mathsf{Supp}_0(\mu)$.
		\end{itemize} 
		Let us consider $\pi_m(r) = \{ \rho \in \Sigma_\mathcal{Q}(m) \mid r \in \mathsf{Supp}_0(\rho)\}$ for $r \in Q_0$. Following our previous assertions, we have $\pi_m(s(\alpha)) \cap \pi_m(t(\beta)) = \varnothing$ and $\pi_m(s(\alpha)) \cup \pi_m(t(\beta)) \subseteq \pi_m(q)$.
		
		Let $X \in \mathscr{C}_{\mathcal{Q},m}$. Then we can decompose $X$ and write
		\begin{center}
			$\displaystyle X = \bigoplus_{\rho \in \Sigma_\mathcal{Q}(m)} M(\rho)^{d_\rho}$
		\end{center}
		for some $d_\rho \in \mathbb{Z}_{\geqslant 0}$ for any $\rho \in \Sigma_\mathcal{Q}(m)$. Again thanks to $(i*)$, for any $\rho \in \Sigma_\mathcal{Q}(m)$ and for any $r \in \mathsf{Supp}_0(\rho)$, $\dim M(\rho)_r = 1$. Thus:
		\begin{center}
			$\displaystyle \dim X_q = \sum_{\rho \in \pi_m(q)} d_\rho \geqslant \sum_{\rho \in \pi_m(s(\alpha))} d_\rho  + \sum_{\rho \in \pi_m(t(\beta))} d_\rho  = \dim X_{s(\alpha)} + \dim X_{t(\beta)}$
		\end{center}
		\hfill
	\end{proof}
	\begin{cor}
		Let $m$ be a vertex of $\mathcal{Q}$ satisfying $(i*)$ and $(ii)(a)$. For any $X \in \mathscr{C}_{\mathcal{Q}, m}$, and for all $\beta \in Q_1$ such that $\Delta(s(\beta)) \leqslant \Delta(t(\beta))$ then $\dim X_{s(\beta)} \geqslant \dim X_{t(\beta)}$. Moreover, for all $\alpha \in Q_1$ such that $\Delta(t(\alpha)) \leqslant \Delta(s(\alpha))$ then $\dim X_{t(\alpha)} \geqslant \dim X_{s(\alpha)}$.
	\end{cor}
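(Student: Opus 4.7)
The plan is to reduce both statements to set-theoretic inclusions among the sets $\pi_m(q) := \{\rho \in \Sigma_\mathcal{Q}(m) : q \in \mathsf{Supp}_0(\rho)\}$, and then to exploit the tree-like structure on $\Sigma_\mathcal{Q}(m)$ afforded by $(i*)$. By interchanging the roles of source and target, the second statement admits an argument symmetric to the first, so I focus on the first.

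By Proposition \ref{1*et2aGenJF}, any $X \in \mathscr{C}_{\mathcal{Q},m}$ decomposes as $X \cong \bigoplus_{\rho \in \Sigma_\mathcal{Q}(m)} M(\rho)^{d_\rho}$, and since condition $(o)$ from Lemma \ref{1imply0} ensures $\dim M(\rho)_q \in \{0,1\}$, one has $\dim X_q = \sum_{\rho \in \pi_m(q)} d_\rho$. Thus $\dim X_{s(\beta)} \geqslant \dim X_{t(\beta)}$ reduces to the containment $\pi_m(t(\beta)) \subseteq \pi_m(s(\beta))$. By Lemma \ref{stringstruct1*et2a} combined with $(i*)$, for every $\mu \in \pi_m(t(\beta))$ the initial segment of $\mu$ ending at $t(\beta)$ coincides with the unique string $\rho_{t(\beta)}$ of length $\Delta(t(\beta))$ from $m$ to $t(\beta)$, so the task further reduces to proving $s(\beta) \in \mathsf{Supp}_0(\rho_{t(\beta)})$.

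To accomplish this, I would try to realize $\rho_{t(\beta)}$ as the concatenation $\beta \cdot \rho_{s(\beta)}$, where $\rho_{s(\beta)}$ is the unique string from $m$ to $s(\beta)$ of length $\Delta(s(\beta)) \leqslant \Delta(t(\beta))$. The reducedness obstruction --- namely $\rho_{s(\beta)}$ ending with $\beta^{-1}$ --- is ruled out by essentially the same observation used in the proof of Lemma \ref{distancetom}: such an ending would force $t(\beta) \in \mathsf{Supp}_0(\rho_{s(\beta)})$, whence the unique string from $m$ to $t(\beta)$ would be a proper initial segment of $\rho_{s(\beta)}$, giving $\Delta(s(\beta)) = \Delta(t(\beta)) + 1$ and contradicting the hypothesis. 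The only remaining obstruction is therefore an ideal relation: $\rho_{s(\beta)}$ ends with a forward arrow $\delta$ satisfying $\beta\delta \in I$. In the absence of such a $\delta$, the word $\beta \rho_{s(\beta)}$ is a valid string, and $(i*)$ forces $\beta \rho_{s(\beta)} = \rho_{t(\beta)}$, placing $s(\beta) \in \mathsf{Supp}_0(\rho_{t(\beta)})$.

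The main obstacle is the case $\beta\delta \in I$. Here I would apply Lemma \ref{distancetom} with $\alpha := \delta$: the hypothesis $\beta\alpha \in I$ is automatic, and whenever the remaining condition $\Delta(s(\beta)) \leqslant \Delta(s(\delta))$ holds, the lemma yields $\dim X_{s(\beta)} \geqslant \dim X_{t(\beta)} + \dim X_{s(\delta)} \geqslant \dim X_{t(\beta)}$. The delicate step is the subcase $\Delta(s(\delta)) < \Delta(s(\beta))$: here I would analyze the last step of $\rho_{t(\beta)}$ directly, using the gentle combinatorics at $t(\beta)$ to identify an alternative arrow entering $t(\beta)$ that realizes $\rho_{t(\beta)}$ as an extension of some shorter string from $m$ whose endpoint then plays the role of $s(\beta)$, and iterate the comparison. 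The proof of the second statement mirrors this analysis with source and target swapped throughout.
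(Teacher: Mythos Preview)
Your reduction to the inclusion $\pi_m(t(\beta)) \subseteq \pi_m(s(\beta))$ is correct, and your isolation of the obstruction---$\rho_{s(\beta)}$ ending in a forward arrow $\delta$ with $\beta\delta \in I$---is precisely the crux. The paper's own proof is the single sentence ``This is an obvious consequence of Lemma~\ref{distancetom}'', so you have already gone well beyond it.

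The obstructed case, however, does not close, and your proposed route through Lemma~\ref{distancetom} never actually fires: since $\delta$ is the last forward step of $\rho_{s(\beta)}$, one always has $\Delta(s(\delta)) = \Delta(s(\beta)) - 1 < \Delta(s(\beta))$, so what you call the ``delicate subcase'' is the only case, and the hypothesis $(\otimes)$ of Lemma~\ref{distancetom} is never met there. Worse, no argument can rescue this, because the corollary as stated is false under $(i*)$ alone. Take $Q_0 = \{m,1,2,3\}$, arrows $a\colon m\to 1$, $c\colon 1\to 2$, $\beta\colon 2\to 3$, $d\colon 3\to 1$, and $I = \langle \beta c,\, d\beta,\, cd\rangle$. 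This quiver is gentle, $m$ satisfies $(ii)(a)$, and the strings from $m$ are exactly $e_m,\ a,\ ca,\ d^{-1}a$ with pairwise distinct endpoints, so $(i*)$ holds. Here $\Delta(s(\beta)) = \Delta(2) = 2 = \Delta(3) = \Delta(t(\beta))$, yet for $X = M(d^{-1}a) \in \mathscr{C}_{\mathcal{Q},m}$ one has $\dim X_{s(\beta)} = 0 < 1 = \dim X_{t(\beta)}$.

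The statement becomes correct if $(i*)$ is strengthened to $(i)$, and then your argument closes cleanly: if $\rho_{s(\beta)}$ ends in $\delta$ with $\beta\delta \in I$ and $\Delta(t(\beta)) < \infty$, then $\rho_{s(\beta)}$ and $\rho_{t(\beta)}$ are strings through $m$ with $s(\beta) \in \mathsf{Supp}_0(\rho_{s(\beta)})$ and $t(\beta) \in \mathsf{Supp}_0(\rho_{t(\beta)})$, while $\beta$ lies on neither (it cannot be the last step of $\rho_{s(\beta)}$ by your own $\beta^{-1}$ analysis, and if it were the last step of $\rho_{t(\beta)}$ then $(i*)$ would force $\rho_{t(\beta)} = \beta\rho_{s(\beta)}$, contradicting $\beta\delta \in I$). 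This directly violates $(i)$, so under $(i)$ one must have $\Delta(t(\beta)) = \infty$ and the inequality is trivial. Since the corollary is not invoked elsewhere in the paper, and the surrounding lemmas (\ref{injsurj0}, \ref{injsurj}) already assume $(i)$, this is almost certainly the intended hypothesis.
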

	\begin{proof}[Proof]
		This is an obvious consequence of Lemma \ref{distancetom}.
	\end{proof}
	\begin{lemma} \label{injsurj0}
		Let $m$ be a vertex of $\mathcal{Q}$ satisfying $(i)$ and $(ii)(a)$. For any $X \in \mathscr{C}_{\mathcal{Q},m}$ and for any $\alpha \in Q_1$, we have $X_\alpha$ is injective or surjective.
	\end{lemma}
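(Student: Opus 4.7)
The plan is to decompose $X$ into string modules through $m$ and then analyze $X_\alpha$ summand-by-summand, showing that if $X_\alpha$ were both non-injective and non-surjective, condition $(i)$ would be violated. First, since $(i)$ implies $(o)$ by Lemma \ref{1imply0}, no band passes through $m$, so every indecomposable summand of $X$ is of the form $M(\rho)$ for some $\rho \in \Sigma_\mathcal{Q}(m)$. Moreover, by minusculeness, $\dim M(\rho)_q \in \{0,1\}$ for every vertex $q$, with a distinguished basis vector $e_\rho^q$ when $q \in \mathsf{Supp}_0(\rho)$. Writing $X \cong \bigoplus_{\rho \in \Sigma_\mathcal{Q}(m)} M(\rho)^{d_\rho}$ and fixing such a basis at each vertex, the linear map $X_\alpha$ acts as follows: on each $e_\rho^{s(\alpha)}$, it sends the vector to $e_\rho^{t(\alpha)}$ if $\alpha \in \mathsf{Supp}_1(\rho)$, and to $0$ otherwise, by Definition \ref{stringrepdef}.

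Next I would classify the strings $\rho \in \Sigma_\mathcal{Q}(m)$ contributing nontrivially around $\alpha$. Let
\[
A = \{\rho : s(\alpha) \in \mathsf{Supp}_0(\rho)\},\quad B = \{\rho : t(\alpha) \in \mathsf{Supp}_0(\rho)\},\quad C = A \cap B.
\]
A key sub-step is to show that if $\rho \in C$, then $\alpha \in \mathsf{Supp}_1(\rho)$. Indeed, were $\rho \in C$ but $\alpha \notin \mathsf{Supp}_1(\rho)$, applying condition $(i)$ to the pair $(\rho,\rho)$ with the arrow $\alpha$ would immediately give a contradiction, since $\alpha \notin \mathsf{Supp}_1(\rho) \cup \mathsf{Supp}_1(\rho)$ while $s(\alpha), t(\alpha) \in \mathsf{Supp}_0(\rho)$. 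Symmetrically, if $\rho \in A$ contains $\alpha$ in $\mathsf{Supp}_1(\rho)$, then automatically $t(\alpha) \in \mathsf{Supp}_0(\rho)$, so $\rho \in C$. Thus $X_\alpha$ is identity-like on the $C$-summands and zero on $(A \setminus C)$-summands, giving $\dim \Ker(X_\alpha) = \sum_{\rho \in A \setminus C} d_\rho$ and $\dim \Coker(X_\alpha) = \sum_{\rho \in B \setminus C} d_\rho$.

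Finally, suppose for contradiction that $X_\alpha$ is neither injective nor surjective. Then there exist $\rho \in A \setminus C$ and $\nu \in B \setminus C$ both with $d_\rho, d_\nu \geqslant 1$. By the observation in the previous paragraph, $\alpha \notin \mathsf{Supp}_1(\rho)$ (otherwise $\rho$ would lie in $C$) and likewise $\alpha \notin \mathsf{Supp}_1(\nu)$. Since $s(\alpha) \in \mathsf{Supp}_0(\rho)$, $t(\alpha) \in \mathsf{Supp}_0(\nu)$, and $\alpha \notin \mathsf{Supp}_1(\rho) \cup \mathsf{Supp}_1(\nu)$, condition $(i)$ is violated for the pair $(\rho,\nu)$. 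The anticipated obstacle is the subtle bookkeeping to make sure that the identification of $\Ker(X_\alpha)$ and $\Coker(X_\alpha)$ as sums over the sets $A \setminus C$ and $B \setminus C$ is legitimate — in particular, that passing to the adapted basis is consistent across summands; but since each $M(\rho)_q$ is at most one-dimensional, this is a matter of assembling the basis in Remark \ref{stringrepconstruct} and poses no real difficulty. (Note that hypothesis $(ii)(a)$ is not actually used in the argument above, but it does no harm to retain it.)
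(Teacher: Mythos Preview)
Your argument is correct and, in fact, a little more transparent than the paper's own proof. The paper splits into two cases according to whether some string of $\Sigma_{\mathcal{Q}}(m)$ uses $\alpha$; in the first case it argues (using $(i)$) that one of $X_{s(\alpha)}$, $X_{t(\alpha)}$ vanishes, and in the second case it invokes the adapted basis built in the proof of Proposition~\ref{1*et2aGenJF} to say that $X_\alpha$ is of maximal rank. Your approach avoids that detour entirely: by working directly with the summand decomposition and the sets $A$, $B$, $C$, you extract a pair $(\rho,\nu)$ witnessing a violation of $(i)$ the moment $X_\alpha$ has both nontrivial kernel and nontrivial cokernel. The bookkeeping concern you raise is indeed harmless, since $X_\alpha$ is block-diagonal for the string decomposition and each block is either zero or an isomorphism of one-dimensional spaces. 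Your closing remark that $(ii)(a)$ is never used is also accurate: the paper needs it only because it appeals to Proposition~\ref{1*et2aGenJF}, whose \emph{statement} requires $(ii)(a)$, whereas your direct argument shows the lemma already follows from $(i)$ alone.
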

	\begin{proof}[Proof]
		Let $m$ be as assumed. Let $X \in \mathscr{C}_{\mathcal{Q},m}$ and $\alpha \in Q_1$. We have to treat two cases.
		
		First assume that there is no string $\rho \in \Sigma_\mathcal{Q}(m)$ such that $\alpha \in \mathsf{Supp}_1(\rho)$. Hence, by $(i*)$, $s(\alpha)$ is not in the vertex support of any string of $\Sigma_\mathcal{Q}(m)$, or $t(\alpha)$ is not in the vertex support of any string of $\Sigma_\mathcal{Q}(m)$. Thus $X_{s(\alpha)} = 0$ or $X_{t(\alpha)} = 0$ which implies either way that $X_\alpha$ is injective or surjective respectively.
		
		Now assume that there exists a string $\rho \in \Sigma_\mathcal{Q}(m)$ such that $\alpha \in \mathsf{Supp}_1(\rho)$. Using bases adapted to the decomposition of $X$ by string modules of $\Sigma_\mathcal{Q}(m)$ as described in the proof of Proposition \ref{1*et2aGenJF}, we can note that $X_\alpha$ is a linear transformation of maximal rank. Hence $X_\alpha$ is injective or surjective.
	\end{proof} 
	\begin{lemma} \label{injsurj}
		Let $m$ be a vertex of $\mathcal{Q}$ satisfying $(i)$ and $(ii)(a)$. Let $X \in \mathscr{C}_{\mathcal{Q},m}$ and $\underline{\lambda} = \mathsf{GenJF}(X)$. Then there exists a dense open set $\Omega \subseteq \mathsf{rep}(\mathcal{Q}, \underline{\lambda})$ such that any representation $Y \in \Omega$ has all its maps $Y_\alpha$ to be injective or surjective for any $\alpha \in Q_1$.
	\end{lemma}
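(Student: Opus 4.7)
The plan is to give an explicit description of $\mathsf{rep}(\mathcal{Q},\underline{\lambda})$ as an affine space and exhibit the dense open set directly. Since condition $(i)$ implies $(i*)$ by Lemma \ref{1imply0}, Proposition \ref{1*et2aGenJF} applies: $\underline{\lambda}$ is a single Jordan block of size $d_q = \dim X_q$ at each vertex. Fix a basis $(e_1^{(q)},\ldots,e_{d_q}^{(q)})$ of each $Y_q = \mathbb{K}^{d_q}$ in which $N_q$ is the standard shift $e_i^{(q)} \mapsto e_{i-1}^{(q)}$ (with $e_0^{(q)} = 0$). At each arrow $\alpha$, a direct computation (determining $Y_\alpha$ by the image of $e^{(s(\alpha))}_{d_{s(\alpha)}}$, which must lie in $\Ker N_{t(\alpha)}^{d_{s(\alpha)}}$) shows that the intertwiner space $V_\alpha = \{Y_\alpha : Y_\alpha N_{s(\alpha)} = N_{t(\alpha)} Y_\alpha\}$ is affine of dimension $\mu_\alpha = \min(d_{s(\alpha)},d_{t(\alpha)})$, parametrized by scalars $(c_1^\alpha,\ldots,c_{\mu_\alpha}^\alpha)$ via
\[
Y_\alpha\, e_i^{(s(\alpha))} \;=\; \sum_{j=1}^{\mu_\alpha} c_j^\alpha\, e^{(t(\alpha))}_{j-d_{s(\alpha)}+i}.
\]
The maximal-rank locus $O_\alpha \subseteq V_\alpha$ is the Zariski open subset $\{c_{\mu_\alpha}^\alpha \neq 0\}$; the degenerate case $d_{s(\alpha)} d_{t(\alpha)} = 0$ is trivial.

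The crux is to prove that each ideal relation $\beta\alpha \in I$ is \emph{automatically} satisfied on $\prod_\alpha V_\alpha$, so that $\mathsf{rep}(\mathcal{Q},\underline{\lambda})$ equals this irreducible affine space. It suffices to handle $d_{s(\alpha)},d_{s(\beta)},d_{t(\beta)} > 0$. I would show that the hypothesis $(\otimes)$ of Lemma \ref{distancetom} then holds. Suppose toward contradiction that $\Delta(s(\alpha)) < \Delta(s(\beta))$: by $(i*)$ and uniqueness of strings from $m$, the string $\sigma_{s(\beta)}$ from $m$ to $s(\beta) = t(\alpha)$ must end with $\alpha$, but $\beta\alpha \in I$ blocks extending it by $\beta$, so $\sigma_{t(\beta)}$ must reach $t(\beta)$ without using $\beta$. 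Then $\beta \notin \mathsf{Supp}_1(\sigma_{s(\beta)}) \cup \mathsf{Supp}_1(\sigma_{t(\beta)})$ while $s(\beta) \in \mathsf{Supp}_0(\sigma_{s(\beta)})$ and $t(\beta) \in \mathsf{Supp}_0(\sigma_{t(\beta)})$, contradicting $(i)$. A symmetric argument gives $\Delta(t(\beta)) \geqslant \Delta(s(\beta))$, so Lemma \ref{distancetom} yields $d_{s(\beta)} \geqslant d_{s(\alpha)} + d_{t(\beta)}$. With the explicit parametrization one checks that $\Im(Y_\alpha) \subseteq \langle e_1^{(s(\beta))},\ldots,e^{(s(\beta))}_{d_{s(\alpha)}}\rangle$ and $\Ker(Y_\beta) \supseteq \langle e_1^{(s(\beta))},\ldots,e^{(s(\beta))}_{d_{s(\beta)}-d_{t(\beta)}}\rangle$, and since $d_{s(\beta)} - d_{t(\beta)} \geqslant d_{s(\alpha)}$ this forces $Y_\beta Y_\alpha = 0$.

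Granted the previous step, $\mathsf{rep}(\mathcal{Q},\underline{\lambda}) = \prod_\alpha V_\alpha$ is an irreducible affine space, and the open subset $\Omega = \prod_\alpha O_\alpha$ is non-empty — realized by $X$ itself, as the endomorphism constructed in the proof of Proposition \ref{1*et2aGenJF} gives a presentation of $X$ in the required bases, and by Lemma \ref{injsurj0} each $X_\alpha$ has maximal rank. Hence $\Omega$ is dense, and every $Y \in \Omega$ has every $Y_\alpha$ injective or surjective. The principal obstacle is the combinatorial argument of the middle paragraph — establishing $(\otimes)$ from $(i)$ — which is exactly where the strong hypothesis $(i)$ does more work than the weaker $(i*)$ used in Lemma \ref{distancetom}.
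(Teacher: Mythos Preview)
Your proof is correct and follows essentially the same strategy as the paper's: parametrize the intertwiner space at each arrow explicitly in an adapted basis for the single Jordan block, identify the maximal-rank locus as an open condition, check that every relation $\beta\alpha\in I$ holds automatically on the full product, and conclude that $\mathsf{rep}(\mathcal{Q},\underline\lambda)$ is an irreducible affine space in which $\Omega=\prod_\alpha O_\alpha$ is a nonempty dense open (witnessed by $X$ via Lemma~\ref{injsurj0}).

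Where you differ from the paper is in the treatment of the relations. The paper simply asserts that $Y_\alpha Y_\beta=0$ ``for free thanks to $(\circledast)$'', but $(\circledast)$ is the conclusion of Lemma~\ref{distancetom} and requires the hypothesis $(\otimes)$, which the paper does not verify. Your middle paragraph supplies exactly this missing step: using $(i)$ (not merely $(i\!*)$) to rule out the possibility that the string from $m$ to the ``middle'' vertex $s(\beta)=t(\alpha)$ arrives via the wrong side, hence forcing $\Delta(s(\beta))\le\min(\Delta(s(\alpha)),\Delta(t(\beta)))$. Your argument there is a bit terse---in particular the ``symmetric'' case $\Delta(t(\beta))<\Delta(s(\beta))$ is not literally symmetric (the roles of $\alpha$ and $\beta$ in $\beta\alpha\in I$ are asymmetric), so one shows instead that $\alpha\notin\mathsf{Supp}_1(\sigma_{s(\alpha)})\cup\mathsf{Supp}_1(\sigma_{s(\beta)})$ and invokes $(i)$ for $\alpha$ rather than $\beta$---but the idea is sound and goes through. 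In this respect your write-up is actually more complete than the paper's.
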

	\begin{proof}[Proof]
		Let $X$ and $\underline{\lambda}$ be as assumed. By Proposition \ref{1*et2aGenJF}, we obtain that $\underline{\lambda} = ((\dim X_q))_{q \in Q_0}$. For $q \in Q_0$, consider $Z_q = \mathbb{K}^{\dim X_q}$ endowed with a fixed nilpotent endomorphism $N_q$ of type $\lambda^q = (\dim X_q)$ for all $q \in Q_0$. We also fix for each $Z_q$ an adapted basis to $N_q$. 
		
		To study the set $\mathsf{rep}(\mathcal{Q}, \underline{\lambda})$, we consider representations $Y = ((Z_q)_{q \in Q_0}, (Y_\alpha)_{\alpha \in Q_1})$ such that $N= (N_q)_{q \in Q_0}$ is a well-defined endomorphism. We can show that there exists a dense open set $\Omega$ in $\mathsf{rep}(\mathcal{Q}, \underline{\lambda})$ in which any representation $Y_{\alpha}$ is injective or surjective. 
		
		For any $\alpha \in Q_1$:
		\begin{itemize}
			\item[•] if $\Delta(s(\alpha)) = \infty$ or $\Delta(t(\alpha)) = \infty$, then $Y_\alpha = 0$ and either way $Y_\alpha$ is injective in the first case or surjective in the second one. In this case, we define $O_\alpha = \mathsf{rep}(\mathcal{Q}, \lambda)$;
			
			\item[•] if $\Delta(s(\alpha)) = \Delta(t(\alpha)) + 1$: by $(\circledast)$ which allows us to assume that $Z_{s(\alpha)} \cong \mathbb{K}^a$ and $Z_{t(\beta)} \cong \mathbb{K}^{a+b}$ with $a > 0$ and $b \geqslant 0$, and by taking $(x_1, \ldots, x_a)$ and $(y_1, \ldots, y_{a+b})$ as adapted bases of respectively $Z_{s(\alpha)}$ and $Z_{t(\alpha)}$ for $N_{s(\alpha)}$ and $N_{t(\alpha)}$, we have $Y_{\alpha}(x_1) = k_1 y_{1+b} + \cdots + k_a y_{a+b}$ with $k_1, \ldots, k_a \in \mathbb{K}$. By commutative square relations $(\diamond) : Y_\alpha N_{s(\alpha)} = N_{t(\alpha)} Y_\alpha$ that $N$ have to satisfy, we can deduce a total description of $Y_\alpha$ from $Y_\alpha(x_1)$. Then, by a generic restriction $k_1 \neq 0$, there exists a dense open set $O_\alpha$ in $\mathsf{rep}(\mathcal{Q},\lambda)$ such that $Y_\alpha$ is injective.
			
			\item[•] if $\Delta(t(\alpha)) = \Delta(s(\alpha)) + 1$: by $(\circledast)$ which allows us to assume that $Z_{s(\alpha)} \cong \mathbb{K}^{b+c}$ and $Z_{t(\beta)} \cong \mathbb{K}^{c}$ with $b \geqslant 0$ and $c > 0$, and by taking $(y_1, \ldots, y_{b+c})$ and $(z_1, \ldots, z_c)$ as adapted bases of respectively $Z_{s(\alpha)}$ and $Z_{t(\alpha)}$ for $N_{s(\alpha)}$ and $N_{t(\alpha)}$, we have $Y_{\alpha}(y_1) = k'_1 z_1 + \cdots + k'_c z_c$ with $k'_1, \ldots, k'_c \in \mathbb{K}$. Again by commutative square relations $(\diamond)$ that $N$ and $Y_\alpha$ have to satisfy, we can deduce a total description of $Y_\alpha$ from $Y_\alpha(y_1)$. Then, by a generic restriction $k'_1 \neq 0$, there exists a dense open set $O_\alpha$ in $\mathsf{rep}(\mathcal{Q},\lambda)$ such that $Y_\alpha$ is surjective.
		\end{itemize}
		Moreover, for any $\alpha, \beta \in Q_1$ such that $\alpha \beta \in I$, we have for free that $Y_\alpha Y_\beta = 0$ thanks to $(\circledast)$. Thus by taking $\Omega = \bigcap_{\alpha \in Q_1} O_\alpha$,and by noting from Lemma \ref{injsurj0} that $X \in \Omega \neq \varnothing$, we get our wanted dense open set.
	\end{proof}
	\begin{lemma} \label{decomp}
		Let $m$ be a vertex of $\mathcal{Q}$ satisfying $(i)$ and $(ii)(a)$, and an integer $l > 0$. Let $Y \in \mathsf{rep}(\mathcal{Q})$ such that for any $q \in Q_0$ with $\Delta(q) > l$, we have $\dim(Y_q) = 0$, and let $\rho \in \Sigma_{\mathcal{Q}}(m)$ such that $\ell(\rho) = l$. If there exists an injective map $\iota : M(\rho) \hookrightarrow Y$, then $M(\rho)$ is a summand of $Y$. 
	\end{lemma}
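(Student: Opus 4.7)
The plan is to construct an explicit direct complement $Y' \subseteq Y$ with $Y = \iota(M(\rho)) \oplus Y'$; since $M(\rho)$ is indecomposable, this suffices. First, by Lemma~\ref{1imply0}, condition $(i)$ implies $(i*)$, and combined with $(ii)(a)$ and Lemma~\ref{stringstruct1*et2a}, one may assume $\rho$ starts at $m$ and is the unique string in $\Sigma_{\mathcal{Q}}(m)$ ending at each of its vertices. Writing $\rho = \alpha_l^{\varepsilon_l} \cdots \alpha_1^{\varepsilon_1}$ with vertex sequence $v_0 = m, v_1, \ldots, v_l = q^* := t(\rho)$, one has $\Delta(v_i) = i$, and the distinguished vectors $e_{v_i} := \iota_{v_i}(x_i) \in Y_{v_i}$ are nonzero by injectivity of $\iota$. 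By commutativity of $\iota$ with the quiver maps, these vectors satisfy the expected string relations under $Y_{\alpha_j}$ for $\alpha_j \in \mathsf{Supp}_1(\rho)$, and $Y_\beta(e_{v_i}) = 0$ for every $\beta \in Q_1 \setminus \mathsf{Supp}_1(\rho)$ with $s(\beta) = v_i$.

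Next I would define $Y'$ by $Y'_q = Y_q$ for $q \notin \mathsf{Supp}_0(\rho)$ and $Y'_{v_i} = W_{v_i}$ at each $v_i$, where $W_{v_i} \subseteq Y_{v_i}$ is a codimension-$1$ subspace not containing $e_{v_i}$. The subrepresentation condition on $Y'$ translates into: $(a)$ compatibility along $\rho$-arrows, namely $Y_{\alpha_j}^{-1}(W_{v_j}) = W_{v_{j-1}}$ when $\varepsilon_j = 1$ and the mirror identity $Y_{\alpha_j}^{-1}(W_{v_{j-1}}) = W_{v_j}$ when $\varepsilon_j = -1$; and $(b)$ for each $\beta \in Q_1 \setminus \mathsf{Supp}_1(\rho)$ with $t(\beta) = v_i$ and $s(\beta) \notin \mathsf{Supp}_0(\rho)$ (condition $(i)$ forbids both endpoints of $\beta$ lying on $\rho$), the inclusion $\Im(Y_\beta) \subseteq W_{v_i}$.

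I would carry out the construction starting at $v_l = q^*$ and propagating back along $\rho$ via $(a)$. At $q^*$, any non-$\rho$ arrow $\beta$ incident to $q^*$ either permits a concatenation extending $\rho$ to a string of length $l+1$ ending at a vertex $q'$, in which case $(i*)$ forces $\Delta(q') = l+1$, hence $Y_{q'} = 0$ by hypothesis and $Y_\beta = 0$; or the concatenation is forbidden by a relation in $I$, in which case the corresponding relation in $Y$ forces $\Im(Y_\beta) \subseteq \Ker(Y_{\alpha_l})$ (or the analogous inclusion, depending on orientations), a subspace that does not contain $e_{v_l}$. Hence a $W_{v_l}$ satisfying $(b)$ while avoiding $e_{v_l}$ exists; propagating via $(a)$ along $\rho$ then automatically produces codimension-$1$ subspaces $W_{v_i}$ with $e_{v_i} \notin W_{v_i}$, as a routine verification shows.

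The hard part will be checking that $(b)$ persists at each intermediate $v_i$ and verifying the consistency of the propagation at \emph{valley} vertices of $\rho$ (those where both adjacent $\rho$-arrows are outgoing, so that $W_{v_i}$ is prescribed from both sides at once). Both reduce to the same case analysis: a non-$\rho$ arrow $\beta$ with $t(\beta) = v_i$ and $s(\beta) \notin \mathsf{Supp}_0(\rho)$ must, by $(i)$ applied to $\rho$ and any string from $m$ reaching $s(\beta)$, have $\beta$ lying on such an auxiliary string $\nu \in \Sigma_{\mathcal{Q}}(m)$; a careful use of gentleness and the way $\nu$ and $\rho$ diverge forces $\Im(Y_\beta)$ to land in the kernel of the $\rho$-arrow adjacent to $v_i$ on the appropriate side, which is contained in the propagated $W_{v_i}$ by construction. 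Once these checks are complete, $Y'$ is a subrepresentation complementing $\iota(M(\rho))$, and $M(\rho)$ is a direct summand of $Y$.
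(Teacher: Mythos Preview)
There is a genuine gap at your consistency check for valley vertices. At such a $v_i$ (with $\varepsilon_i = -1$, $\varepsilon_{i+1} = 1$), propagation from the right fixes $W_{v_i} = Y_{\alpha_{i+1}}^{-1}(W_{v_{i+1}})$; for the other condition $W_{v_i} = Y_{\alpha_i}^{-1}(W_{v_{i-1}})$ to admit \emph{any} solution $W_{v_{i-1}}$, you need $\Ker(Y_{\alpha_i}) \subseteq W_{v_i}$, and nothing in your case analysis (which concerns only non-$\rho$ arrows) ensures this. In fact the lemma as stated is false, so no argument can succeed without a further hypothesis: take $Q_0 = \{m,a,b\}$, $Q_1 = \{\alpha\colon a\to m,\ \beta\colon a\to b\}$, $I = 0$; then $m$ satisfies $(i)$ and $(ii)(a)$. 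With $\rho = \beta\alpha^{-1}$ (so $l=2$) and $Y = M(\alpha)\oplus M(\beta)$, the map $\iota$ sending the basis vector of $M(\rho)_a$ to the diagonal $y_0+z_0$ is an injective morphism, yet $M(\rho)$ is not a summand by Krull--Schmidt. Here $a$ is a valley and your construction forces $W_a = \Ker(Y_\beta) = \langle y_0\rangle$, which does not contain $\Ker(Y_\alpha) = \langle z_0\rangle$.

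The paper's proof takes a different route---decompose $Y$ into string modules via Theorem~\ref{BR}, then argue that injectivity of $\iota$ forces $\rho$ to sit at the bottom of some summand $M(\nu)$, whence $\nu=\rho$---but it contains the same error in a different guise: it asserts that every string summand of $Y$ lies in $\Sigma_{\mathcal{Q}}(m)_{\leqslant l}$, which fails for $M(\beta)$ in the example above. The lemma is only invoked in Proposition~\ref{1et2a} for representations $Y$ with $\underline{\dim}(Y) = \underline{\dim}(X)$ for some $X\in\mathscr{C}_{\mathcal{Q},m}$; under that extra hypothesis the dimension count rules out summands like $M(\beta)$ and both approaches can be made to work, so you should add this assumption before attempting your construction.
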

	\begin{proof}[Proof]
		Let $m$, $Y$, $\rho$ and $l$ be as assumed. Let us denote $\Sigma_\mathcal{Q}(m)_{\leqslant l} = \{ \rho \in \Sigma_\mathcal{Q}(m) \mid \ell(\rho) \leqslant l \}$. By Theorem \ref{BR} and by hypothesis $(i)$ (which allows us to say that $Y$ has no band representation as a summand), we can write 
		\begin{center}
			$\displaystyle Y \cong \bigoplus_{\nu \in \Sigma_{\mathcal{Q}}(m)_{\leqslant l}} M(\nu)^{d_\nu} $
		\end{center}
		where $d_\nu \geqslant 0$ for any $\nu \in \Sigma_{\mathcal{Q}}(m)_{\leqslant l}$.
		
		First, if $l = 0$, then $Y \cong M(e_m)^d$ and the lemma becomes trivial. From now until the end of the proof, we will assume that $l > 0$.
		
		Let us suppose that there exists an injective map $\iota : M(\rho) \hookrightarrow Y$. Then there exists a string $\nu \in \Sigma_{\mathcal{Q}}(m)_{\leqslant l}$ such that $\rho$ is at the bottom of $\nu$. By definition $M(\nu)$ is a direct summand of $Y$. Let us assume that $\rho$ is a strict substring of $\nu$, i.e. there exists an arrow $\alpha$ such that $\alpha \rho$, $\alpha^{-1} \rho$, $\rho \alpha$ or $\rho \alpha^{-1}$ is a substring of $\nu$.
		
		On one hand, as $s(\rho) = m$, and by $(ii)(a)$, there is no arrow $\alpha$ such that $\rho \alpha$ or $\rho \alpha^{-1}$ is a string. On the other hand, if $\alpha^\varepsilon \rho$ is a string, then $\ell(\nu) \geqslant \ell(\rho) + 1 > l$ and so $\Delta(t(\nu)) > l$. Hence we get a contradiction between the fact that $\dim Y_{t(\nu)} = 0$ and the fact that $M(\nu)$ is a summand of $Y$.
		
		We conclude that $M(\rho)$ is a summand of $Y$.
	\end{proof}
	\begin{remark}
		Under the same conditions over $Y$, and by dual arguments, we can prove that if there exists a surjective map $\pi: Y \twoheadrightarrow M(\rho)$, then $M(\rho)$ is a summand of $Y$.
	\end{remark}
	\begin{prop} \label{1et2a}
		Let $m$ be a vertex of $\mathcal{Q}$ satisfying $(i)$ and $(ii)(a)$. Then $\mathscr{C}_{\mathcal{Q},m}$ is canonically Jordan recoverable.
	\end{prop}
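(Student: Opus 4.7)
The plan is to combine Proposition \ref{1*et2aGenJF} and Lemma \ref{injsurj} with an induction that peels off one string summand at a time via Lemma \ref{decomp}. By Proposition \ref{1*et2aGenJF}, $\underline{\lambda} := \mathsf{GenJF}(X) = ((\dim X_q))_{q\in Q_0}$ has a single Jordan block at each vertex; let $\Omega \subseteq \mathsf{rep}(\mathcal{Q}, \underline{\lambda})$ be the dense open set produced by Lemma \ref{injsurj}. I will show every $Y \in \Omega$ satisfies $Y \cong X$, by induction on $\dim X_m$, which by $(i*)$ and Lemma \ref{stringstruct1*et2a} equals the number of indecomposable summands of $X$. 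The base case $X=0$ is immediate.

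For the inductive step, choose $\rho \in \Sigma_\mathcal{Q}(m)$ with $M(\rho)$ a summand of $X$ and $l := \ell(\rho)$ maximal, and set $q_0 := t(\rho)$, so $\Delta(q_0)=l$. Maximality of $l$ forces $\dim Y_{q'} = 0$ whenever $\Delta(q') > l$, which is the dimension hypothesis of Lemma \ref{decomp}. The core step is to construct an injective morphism $\iota \colon M(\rho) \hookrightarrow Y$. Writing $\rho = \alpha_l^{\varepsilon_l} \cdots \alpha_1^{\varepsilon_1}$ with vertex sequence $v_0 = m, v_1, \ldots, v_l = q_0$, I would fix bases of each $Y_q$ adapted to the single Jordan block $N_q$, set $\iota(x_l)$ to be a generator of the Jordan block at $q_0$, and propagate backward along $\rho$: when $\varepsilon_{i+1} = -1$, directly apply $Y_{\alpha_{i+1}}$ (which is injective by Lemma \ref{injsurj}, so nonzero vectors go to nonzero vectors); when $\varepsilon_{i+1} = 1$, pick a preimage of $\iota(x_{i+1})$ under the surjective $Y_{\alpha_{i+1}}$ (the preimage coset does not contain $0$ since $\iota(x_{i+1}) \neq 0$). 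Each $\iota(x_i)$ is nonzero, so $\iota$ is componentwise injective. Once $\iota$ is confirmed to be a representation morphism, Lemma \ref{decomp} gives $Y \cong M(\rho) \oplus Y'$; writing $X \cong M(\rho) \oplus X'$, the dimension vectors of $Y'$ and $X'$ agree, the restriction of $N$ to $Y'$ retains a single Jordan block at each vertex, and the injective/surjective property of the arrow maps is inherited on $Y'$, so $Y'$ lies in the analogous open set for $X'$. The inductive hypothesis then gives $Y' \cong X'$, whence $Y \cong X$.

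The main obstacle is verifying that the $\iota$ constructed above is a genuine representation morphism: one must check $Y_\beta(\iota(x_i)) = 0$ for every arrow $\beta \notin \mathsf{Supp}_1(\rho)$ with $s(\beta) = v_i$. Here hypothesis $(i)$ enters decisively. Applying $(i)$ with $\rho$ and an arbitrary second string shows that either $\Delta(t(\beta)) = \infty$ (so $Y_\beta = 0$ automatically) or every string of $\Sigma_\mathcal{Q}(m)$ through $t(\beta)$ must contain $\beta$; combined with $(i*)$, the unique string $\rho_{t(\beta)} \in \Sigma_\mathcal{Q}(m)$ reaching $t(\beta)$ is then forced to be the extension of $\rho_{v_i}$ by $\beta$. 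A careful analysis of how the preimage/image choices made in building $\iota$ interact with this tree structure of strings, together with the relations coming from $(ii)(a)$, is what should pin down $Y_\beta(\iota(x_i)) = 0$. This delicate bookkeeping is the nontrivial part of the argument and is likely to require a case split on whether $\rho$ performs a straight walk, a peak, or a valley at the vertex $v_i$.
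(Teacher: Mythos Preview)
Your overall strategy coincides with the paper's: use Proposition~\ref{1*et2aGenJF} for single Jordan blocks, Lemma~\ref{injsurj} for the dense open set $\Omega$ of representations with all arrow maps injective or surjective, then build an embedding $\iota\colon M(\rho)\hookrightarrow Y$ for a maximal-length string $\rho$ by propagating from the endpoint $t(\rho)$ back to $m$, and invoke Lemma~\ref{decomp}. The paper differs only in bookkeeping: it inducts on the maximal string length $l$ (working with the filtration $\mathscr{C}_{\mathcal{Q},m}(l)$) and peels off all copies $M(\rho)^{\dim X_{t(\rho)}}$ for every string of length $l+1$ in one step, rather than one summand at a time. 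The ``main obstacle'' you flag---checking $Y_\beta(\iota(x_i))=0$ for arrows $\beta\notin\Supp_1(\rho)$ with $s(\beta)=v_i$---is exactly what the paper hides behind the phrase ``we can easily check that $Y_\alpha(W_{s(\alpha)})\subseteq W_{t(\alpha)}$''. Your instinct that $(i)$ and a case split are needed is correct: a short gentleness argument shows that either $\Delta(t(\beta))=\infty$ (so $Y_\beta=0$), or one is forced into the case $\varepsilon_{i+1}=-1$ together with a relation $\beta\alpha_{i+1}\in I$, whence $Y_\beta(\iota(x_i))=Y_\beta Y_{\alpha_{i+1}}(\iota(x_{i+1}))=0$.

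There is, however, a genuine gap in your inductive step. You assert that after splitting $Y\cong M(\rho)\oplus Y'$ the endomorphism $N$ ``restricts to $Y'$ with a single Jordan block at each vertex'' and that ``the injective/surjective property of the arrow maps is inherited on $Y'$''. Neither claim is justified: $N$ need not preserve any chosen complement of $M(\rho)$ inside $Y$, and the composite $Z_{s(\alpha)}\hookrightarrow Y_{s(\alpha)}\xrightarrow{Y_\alpha}Y_{t(\alpha)}\twoheadrightarrow Z_{t(\alpha)}$ can lose injectivity or surjectivity. This is precisely why the paper organises the induction by length rather than by $\dim X_m$: after stripping \emph{all} strings of length $l+1$, the remainder $Z$ satisfies $\dim Z_q=0$ whenever $\Delta(q)>l$, so one can invoke directly the canonical Jordan recoverability of $\mathscr{C}_{\mathcal{Q},m}(l)$ to produce a dense open set in $\mathsf{rep}(\mathcal{Q},\mathsf{GenJF}(X'))$, without needing $Z$ to inherit a specific nilpotent endomorphism from $Y$. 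If you want to keep your one-summand-at-a-time induction, you would need an independent argument that $Y'$ lies in $\Omega_{X'}$, which amounts to reproving most of the statement.
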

	\begin{proof}[Proof]
		Let $\mathscr{C}_{\mathcal{Q},m}(l)$ for any integer $l \geqslant 0$ be the subcategory of $\mathscr{C}_{\mathcal{Q},m}$ additively generated by representations $M(\rho)$ such that $\ell(\rho) \leqslant l$. We will prove by induction on $l$ that $\mathscr{C}_{\mathcal{Q},m}(l)$ is canonically Jordan recoverable. As $\mathscr{C}_{\mathcal{Q},m} = \mathscr{C}_{\mathcal{Q},m}(l_0)$ for some $l_0 \geqslant 0$ by finiteness of the set of strings passing through $m$, we will be able to conclude.
		
		It is easy to check that $\mathscr{C}_{\mathcal{Q},m}(0) = \mathsf{add}(S_m)$ is canonically Jordan recoverable: it suffices to note that for all $Y \in \mathsf{rep}(Q)$ such that $Y_q \neq 0$ for $q \neq m$, we have $Y_\alpha = 0$ for all $\alpha \in Q_1$, as a consequence of $(i)$, and thus $Y \cong S_m^a$ for some $a \in \mathbb{N}$.
		
		Now let us assume that for a fixed $l \geqslant 0$ then $\mathscr{C}_{\mathcal{Q},m}(l)$ is canonically Jordan recoverable. Let $X \in \mathscr{C}_{\mathcal{Q},m}(l+1)$. First, if $X \in \mathscr{C}_{\mathcal{Q},m}(l)$, by induction, there exists a dense open set $O$ in $\mathsf{rep}(\mathcal{Q}, \mathsf{GenJF}(X))$ such that any representation in $O$ is isomorphic to $X$. Let us assume that $X \notin \mathscr{C}_{\mathcal{Q},m}(l)$. This implies in particular that there exists a vertex $w \in Q_0$ such that $\Delta(w) = l+1$ and $\dim X_{w} > 0$. 
		
		By the fact that $\mathscr{C}_{\mathcal{Q},m}(l+1) \subset \mathscr{C}_{\mathcal{Q},m}$, we saw that $\mathsf{GenJF}(X) = ((\dim X_q))_{q \in Q_0}$ by Lemma \ref{1*et2aGenJF} and there exists a dense open set $\Omega$ of $\mathsf{rep}(\mathcal{Q}, ((\dim X_q))_{q \in Q_0})$ in which any representation $Y \in \Omega$ we have $Y_\alpha$ injective or surjective for any $\alpha \in Q_1$ by Lemma \ref{injsurj}.
		
		Let $\rho = \alpha_{l+1}^{\varepsilon_{l+1}} \cdots \alpha_1^{\varepsilon_1}$ be the unique string of $\Sigma_\mathcal{Q}(m)$ such that $s(\rho) =m$ and $t(\rho) = w$. Let $v_0 = m$ and $v_i = t(\alpha_i^{\varepsilon_i})$ for $i \in \{1, \ldots, l+1\}$. Write $\delta = \dim X_w$. For any $i \in \{0, \ldots, l+1\}$, we consider a sequence $z_1^{(i)}, \ldots, z_\delta^{(i)}$ of vectors such that:
		\begin{itemize}
			\item[•] we choose $z_1^{(l+1)}, \ldots, z_\delta^{(l+1)}$ such that it forms a basis of $Y_w$ adapted to $N_w$.
			
			\item[•] recursively, once we have defined $z_1^{(i+1)}, \ldots, z_\delta^{(i+1)}$:
			
			\begin{itemize}
				\item[•] if $\varepsilon_{i+1} = -1$, we define $z_j^{(i)} = Y_{\alpha_{i+1}}(z_j^{(i+1)})$ for all $j \in \{1, \ldots, \delta\}$; note that for $Y \in \Omega$, $Y_{\alpha_{i+1}}$ is injective and so $z_1^{(i)}, \ldots z_{\delta}^{(i)}$ are linearly independent;
				
				\item[•] else if $\varepsilon_{i+1} = 1$, we know that for $Y \in \Omega$, $Y_{\alpha_{i+1}}$ is surjective; so there exists a $z_1^{(i)}$ such that $Y_{\alpha_{i+1}}(z_1^{(i)}) = z_1^{(i+1)}$. Hence once we choose such a $z_1^{(i)}$, we define $z_j^{(i)} = N_{v_i}^{j-1}(z_1^{(i)})$ for $j \in \{2, \ldots, \delta\}$. Again, we can note that $z_1^{(i)}, \ldots z_{\delta}^{(i)}$ are linearly independent.
			\end{itemize}
		\end{itemize} 
		We can define, for $i \in \{0, \ldots, l+1\}$, $W_{v_i} = \langle z_j^{(i)} \mid 1 \leqslant j \leqslant \delta \rangle$ and for $q \notin \mathsf{Supp}_0(\rho)$, $W_q = 0$. By construction, we can easily check that for any $\alpha \in Q_1$, $Y_\alpha(W_{s(\alpha)}) \subseteq W_{t(\alpha)}$ and more precisely, for $\alpha \in \mathsf{Supp}_1(\rho)$, $Y_\alpha (W_{s(\alpha)}) = W_{t(\alpha)}$. 
		
		Let us consider the following representation $W=((W_q)_{q \in Q_0}, (W_\alpha)_{\alpha \in Q_1})$ where $W_q$ are vector spaces as we defined previously and $W_\alpha = Y_\alpha|_{W_\alpha}$. Then, we can easily verify that $W \cong M(\rho)^\delta$ following our construction.
		
		Thanks to the $z_j^{(i)}$, we can construct an injective map $\iota : M(\rho)^\delta \hookrightarrow Y$. By applying the previous lemma, then we can write that for any $Y \in \Omega$, then $Y \cong M(\rho)^\delta \oplus Y'$ where $\dim Y'_{w} = 0$.
		By reproducing the same construction for any vertex $w$ such that $\Delta(w) = l+1$ and $\dim X_w > 0$, we have for any $Y \in \Omega$,
		\begin{center}
			$\displaystyle Y \cong Z \oplus \left( \bigoplus_{\rho \in \Sigma_Q(m) \mid \ell(\rho) = l+1} M(\rho)^{\dim X_{t(\rho)}} \right)$ 
		\end{center} 
		where $Z_q = 0$ for any $q \in Q_0$ such that $\Delta(q) > l$. In addition, we can note that $\mathsf{GenJF}(Z) = ((\dim Z_q))_{q \in Q_0}$.
		On the other side, we have:
		\begin{center}
			$\displaystyle X \cong X' \oplus \left( \bigoplus_{\rho \in \Sigma_Q(m) \mid \ell(\rho) = l+1} M(\rho)^{\dim X_{t(\rho)}} \right)$
		\end{center} 
		with $X' \in \mathscr{C}_{\mathcal{Q},m}(l)$ and thus $\mathsf{GenJF}(X') = ((\dim X_q'))_{q \in Q_0}$. We can check easily that $\mathsf{GenJF}(X') = \mathsf{GenJF}(Z)$, by the fact that $\underline{\dim}(X') = \underline{\dim}(Z)$. 
		
		By induction, as $\mathscr{C}_{\mathcal{Q},m}(l)$ is canonically Jordan recoverable, there exists a dense open set $\Omega' \subset \mathsf{rep}(\mathcal{Q}, \mathsf{GenJF}(X'))$ such that any $Z \in \Omega'$ is isomorphic to $X'$.
		
		So there exists a dense open set $O \subset \mathsf{rep}(\mathcal{Q}, \mathsf{GenJF}(X))$ such that $Y \cong X$.
		
		Hence we conclude that $\mathscr{C}_{\mathcal{Q},m}(l+1)$ is canonically Jordan recoverable, and finally $\mathscr{C}_{\mathcal{Q},m}(l)$ is canonically Jordan recoverable for any integer $l \geqslant 0$. In particular, $\mathscr{C}_{\mathcal{Q},m}$ is canonically Jordan recoverable.
	\end{proof}
	Now we will prove that these conditions are necessary.
	
	We start by showing that if $m$ does not satisfy $(i)$, then $\mathscr{C}_{\mathcal{Q},m}$ is not canonically Jordan recoverable.
	\begin{prop} \label{not1} Let $m$ be a vertex which does not satisfy $(i)$. Then $\mathscr{C}_{\mathcal{Q},m}$ is not canonically Jordan recoverable.
	\end{prop}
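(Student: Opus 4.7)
The plan is to exhibit, from the failure of $(i)$, a representation $X \in \mathscr{C}_{\mathcal{Q},m}$ together with a non-isomorphic $X' \in \mathsf{rep}(\mathcal{Q}, \mathsf{GenJF}(X))$ whose iso-classes each parametrize a positive-dimensional subvariety of $\mathsf{rep}(\mathcal{Q}, \mathsf{GenJF}(X))$ lying in distinct irreducible components of this variety. Such a configuration precludes any dense open subset of $\mathsf{rep}(\mathcal{Q}, \mathsf{GenJF}(X))$ from consisting only of representations isomorphic to $X$, generalising Example~\ref{Kro} and Remark~\ref{CJRneed1}.

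First I would fix witnesses to the failure of $(i)$: strings $\rho, \nu \in \Sigma_\mathcal{Q}(m)$ and an arrow $\alpha \in Q_1 \setminus (\mathsf{Supp}_1(\rho) \cup \mathsf{Supp}_1(\nu))$ with $s(\alpha) \in \mathsf{Supp}_0(\rho)$ and $t(\alpha) \in \mathsf{Supp}_0(\nu)$, chosen so that $\ell(\rho) + \ell(\nu)$ is minimal. By replacing $\rho, \nu$ with suitable substrings, I may assume $t(\rho) = s(\alpha)$ and $t(\nu) = t(\alpha)$; together with the gentle axioms this gives a controlled local picture around $\alpha$ and a short list of possible relations involving $\alpha$ and the final arrows of $\rho$ and $\nu$.

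I would then set $X = M(\rho) \oplus M(\nu) \in \mathscr{C}_{\mathcal{Q},m}$. Since $\alpha$ is absent from both strings, $X_\alpha = 0$ while $X_{s(\alpha)}$ and $X_{t(\alpha)}$ are both nontrivial. To build $X'$ of the same dimension vector, I would \emph{activate} $\alpha$ by a rank-one map between the distinguished basis vectors of $M(\rho)$ at $s(\alpha)$ and of $M(\nu)$ at $t(\alpha)$; ideal relations of the form $\beta\alpha \in I$ or $\alpha\beta \in I$ then force me to zero out certain nearby maps of $X$ that previously took these basis vectors to nonzero values. By the minimality of $\rho, \nu$, only arrows lying beyond the vertex $s(\alpha)$ (resp.\ $t(\alpha)$) inside $\rho$ (resp.\ $\nu$) can be affected, so $X'$ is a well-defined representation with the correct dimension vector. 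By Theorem~\ref{BR}, $X'$ decomposes into a different collection of string modules (some now supported on $\alpha$), hence $X' \not\cong X$. This is precisely the mechanism behind the two non-isomorphic candidates $X_1, X_2$ in Remark~\ref{CJRneed1}.

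Finally I would verify $\mathsf{GenJF}(X') = \mathsf{GenJF}(X)$ by an explicit transfer of nilpotent endomorphisms adapted from the construction in the proof of Proposition~\ref{1*et2aGenJF}, and argue that the iso-classes of $X$ and $X'$ lie in distinct irreducible components of $\mathsf{rep}(\mathcal{Q}, \mathsf{GenJF}(X))$ distinguished by the vanishing or non-vanishing of $Y_\alpha$, each iso class being open in its component via generic scalar choices for the remaining free parameters. This yields the obstruction to canonical Jordan recoverability. The principal difficulty lies in the explicit construction of $X'$ and the verification of all gentle relations: a careful case analysis driven by the local configuration of $I$ at $s(\alpha)$ and $t(\alpha)$ is required, and this is precisely where the minimality of the chosen witnesses plays its crucial role.
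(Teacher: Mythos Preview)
Your overall strategy---exhibit $X = M(\rho)\oplus M(\nu)$ and a non-isomorphic $X'\in\mathsf{rep}(\mathcal{Q},\mathsf{GenJF}(X))$ with $X'_\alpha\neq 0$---is the right one, but you are working much harder than necessary and some of the steps you outline are not justified.

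The paper's proof uses the same $X$ but then observes a simple closed-condition argument: any representation isomorphic to $X$ has $Y_\gamma=0$ (isomorphism invariance of the zero map), so if a dense open set of $\mathsf{rep}(\mathcal{Q},\mathsf{GenJF}(X))$ consisted of such $Y$, then $Y_\gamma=0$ would hold on all of $\mathsf{rep}(\mathcal{Q},\mathsf{GenJF}(X))$. It then constructs $W\in\mathsf{rep}(\mathcal{Q},\mathsf{GenJF}(X))$ with $W_\gamma\neq 0$ by setting \emph{every other arrow to zero}. This is the key simplification you are missing: with $W_\beta=0$ for $\beta\neq\gamma$, all ideal relations and all commutativity squares $W_\beta N_{s(\beta)}=N_{t(\beta)}W_\beta$ hold trivially, so one only has to produce a single linear map $W_\gamma$ intertwining two nilpotent Jordan blocks, which is elementary. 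No case analysis on the local configuration of $I$, no minimality argument, and no computation of $\mathsf{GenJF}$ of a second representation is required.

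By contrast, your proposed $X'$ keeps most of $X$ intact and tries to activate $\alpha$ while selectively zeroing neighbouring maps. This forces exactly the case analysis you anticipate, and your plan to verify $\mathsf{GenJF}(X')=\mathsf{GenJF}(X)$ by transferring the construction of Proposition~\ref{1*et2aGenJF} is problematic: that proposition assumes $(i*)$ and $(ii)(a)$, neither of which is available here, and in any case you only need that $X'$ \emph{admits} a nilpotent endomorphism of the prescribed Jordan type, not that its generic one has that type. Finally, your claim that $\{Y_\alpha=0\}$ and $\{Y_\alpha\neq 0\}$ cut out distinct irreducible components is not correct as stated (the first is closed, the second open); the density argument above is both simpler and sufficient.
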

	\begin{proof}[Proof]
		Let $m$ be as supposed. It means we have $\rho$ and $\nu$ two distinct strings passing through $m$ such that there exists an arrow $\gamma$ satisfying $\gamma \notin \mathsf{Supp}_1(\rho) \cup \mathsf{Supp}_1(\nu)$, $s(\gamma) \in \mathsf{Supp}_0(\rho)$ and $t(\gamma) \in \mathsf{Supp}_0(\nu)$.
		
		In such a configuration, if we take the representation $X = M(\rho) \oplus M(\nu) \in \mathscr{C}_{\mathcal{Q},m}$, then $X_{s(\gamma)}$ and $X_{t(\gamma)}$ are non-zero vector spaces, and $X_\gamma$ is a zero map. 
		
		Now let us assume that $\mathscr{C}_{\mathcal{Q},m}$ is canonically Jordan recoverable. It implies that there exists an open dense set $O \subset \mathsf{rep}(\mathcal{Q}, \mathsf{GenJF}(X))$ such that any representation $Z$ in $O$ is isomorphic to $X$. Thus any representation $Y$ of $\mathsf{rep}(\mathcal{Q},\mathsf{GenJF}(X))$ satisfies $Y_\gamma = 0$.
		
		By seeking a contradiction, we will construct $W \in \mathsf{rep}(\mathcal{Q}, \mathsf{GenJF}(X))$ such that $W_\gamma \neq 0$. Let $(W_q)_{q \in Q_0}$ be a collection of vector spaces such that $W_q \cong Y_q$. For all $q \in Q_0$, consider $N_q$ be a nilpotent endomorphism of $W_q$ admitting a Jordan form given by the partition $\mathsf{GenJF}(X)^q$.
		
		Choose $u \notin \mathsf{Im}(N_{s(\gamma)})$ and $v \notin \mathsf{Im}(N_{t(\gamma)})$. Consider $\eta$ the index such that $N_{s(\gamma)}^{\eta-1}(u) \neq 0$ and $N_{t(\gamma)}^\eta(u) = 0$, and define $u_i = N^i(u)$ for all $i \in \{0, \ldots, \eta-1\}$. By noting $(u_0, \ldots, u_{\eta-1})$ is linearly independant, we can complete the family into a basis $(u_0, \ldots, u_p)$ of $W_{s(\gamma)}$. Similarly, let $k$ be the index such that $N_{t(\gamma)}^{k-1}(v) \neq 0 = N_{t(\gamma)}^k(v)$, and define $v_i = N^i(v)$ for all $i \in \{0, \ldots, \eta-1\}$. We complete the family $(v_0, \ldots,v_{k-1})$ into a basis $(v_0, \ldots, v_r)$ of $W_{t(\gamma)}$.  We can define a linear transformation $W_\gamma = W_{s(\gamma)} \longrightarrow W_{t(\gamma)}$ defined by 
		\begin{itemize}
			\item[$\bullet$] in the case where $k \geqslant i$, we put $W_\gamma(u_i) = v_{k-\eta+i}$ if $i \in \{0,\ldots,\eta-1\}$, and $W_\gamma(u_i) = 0$ otherwise;
			
			\item[$\bullet$] in the case where $k \leqslant \eta$, we put $W_\gamma(u_i) = v_i$ if $i \in \{0, \ldots, k-1\}$, and $W_\gamma(u_i) = 0$ otherwise.
		\end{itemize}
		We can define a representation $W$ as it follows:
		\begin{itemize}
			\item[$\bullet$] $W_q = Y_q$ for $q \in Q_0$;
			
			\item[$\bullet$]  $W_\alpha = 0$ for $\gamma \neq \alpha \in Q_1$, and $W_\gamma$ be as we defined it above.
		\end{itemize}
		By construction, we get that $W \in \mathsf{rep}(\mathcal{Q}, \mathsf{GenJF}(X))$.
		
		We showed above, from our assumption that $X$ was canonically Jordan recoverable, that every $Y \in \mathsf{rep}(\mathcal{Q}, \mathsf{GenJF}(X))$ has $Y_\gamma = 0$. So we have reached a contradiction.
	\end{proof}
	\begin{remark}
		We recall that if $m$ does not satisfy $(i)$ then we could still have $\mathscr{C}_{\mathcal{Q},m}$ Jordan recoverable (see Remark \ref{CJRneed1}).
	\end{remark}
	Finally, we have to prove that if $m$ does not satisfy $(ii)$, then $\mathscr{C}_{\mathcal{Q},m}$ is not canonically Jordan recoverable.
	
	Here we first need to know the existence of a certain kind of string.
	\begin{lemma} \label{stringstruct0}
		Let $m$ be a vertex of $\mathcal{Q}$ satisfying $(o)$ that  does not satisfy $(ii)$. Then there exists a string $\nu$ with $s(\nu) = m$ and three distinct arrows $\alpha, \gamma$ and $\delta $ such that $\delta \nu$, $\gamma^{-1}\nu$ and $\nu \alpha^\varepsilon$ are strings of $\mathcal{Q}$ and $\delta \gamma \in I$.
	\end{lemma}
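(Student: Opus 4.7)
The plan is to construct $\nu$ as the longest common prefix of two distinct $m$-maximal ``half-strings'' starting at $m$ guaranteed by $(ii)(b)$ failing, derive the relation $\delta\gamma \in I$ at the divergence vertex $v = t(\nu)$ via the gentleness conditions at $v$, and produce the arrow $\alpha^\varepsilon$ extending $\nu$ at the $m$-end from $(ii)(a)$ failing. I would start by analysing each sub-case of $(ii)(a)$ failing (two outgoing arrows at $m$; two incoming arrows at $m$; or an out/in pair $\alpha', \beta'$ with $\alpha'\beta' \notin I$) and check in each case that there exist two distinct ``initial steps'' $\xi_1^{\eta_1} \neq \xi_2^{\eta_2}$ with source $m$ such that the two-step walk $\xi_1^{\eta_1}\xi_2^{-\eta_2}$ is a valid string passing through $m$. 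Consequently any string starting with $\xi_1^{\eta_1}$ admits an extension at its $m$-end by $(\xi_2^{\eta_2})^{-1}$, which will play the role of $\alpha^\varepsilon$.

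Next, from $(ii)(b)$ failing, pick two distinct maximal strings through $m$; by $(o)$ these are simple walks, and splitting each at $m$ produces at least two distinct $m$-maximal half-strings $\sigma_1, \sigma_2$ starting at $m$ (strings that cannot be extended at their non-$m$ end). I would arrange the choice so that $\sigma_1$ begins with one of the $\xi_i^{\eta_i}$ identified above. Let $\nu$ be the longest common initial substring of $\sigma_1$ and $\sigma_2$ and set $v := t(\nu)$. Then $\sigma_1, \sigma_2$ diverge past $v$ via two distinct extensions $\beta_1^{\varepsilon_1'}\nu$ and $\beta_2^{\varepsilon_2'}\nu$ that are both strings. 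A short case analysis on the orientation of $\nu$'s last arrow at $v$ shows, via the gentleness restrictions, that exactly one of these extensions is by an outgoing arrow $\delta$ at $v$ while the other is the reverse of an incoming arrow $\gamma$ at $v$; moreover, because $\nu$'s last step already occupies the unique incoming (resp.\ outgoing) partner of $\delta$ (resp.\ $\gamma$) which is \emph{not} in the ideal, the remaining such partner at $v$ is forced by gentleness to give $\delta\gamma \in I$. Combined with $\alpha^\varepsilon$ from the first paragraph, this produces all the required data.

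The main obstacle I expect is the edge case $v = m$ (equivalently $\nu = e_m$), where the branching already occurs at $m$ and all three arrows are incident to $m$; here distinctness of $\alpha, \gamma, \delta$ must be verified by a direct enumeration of how $(ii)(a)$ and $(ii)(b)$ jointly fail, using that two distinct maximal strings through $m$ under $(o)$ force the existence of enough incident arrows at $m$ to supply a compatible relation. A lesser technical point is the combined case-split (three sub-cases of $(ii)(a)$ times two orientations of $\nu$'s last step at $v$), together with ruling out the coincidence $\alpha \in \{\gamma, \delta\}$ in the case $v \neq m$, which a priori could occur only when $\alpha$ is an arrow between $m$ and $v$ and is excluded by matching the initial arrow of $\nu$ against the choice of $\alpha$.
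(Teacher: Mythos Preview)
Your overall architecture matches the paper's: split two distinct maximal strings through $m$ (supplied by $(ii)(b)$ failing together with $(o)$) into half-strings based at $m$, take $\nu$ as a longest common prefix of two of them, read off $\delta$ and $\gamma$ at the divergence vertex via gentleness, and use the failure of $(ii)(a)$ to produce the extension $\alpha^\varepsilon$ at the $m$-end. Your analysis that the two continuations at $v=t(\nu)$ must be one outgoing arrow $\delta$ and one inverse of an incoming arrow $\gamma$, with $\delta\gamma\in I$, is correct and is exactly what the paper does.

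The one place where you make life harder than necessary, and where a small gap appears, is the order in which you produce $\alpha$. You fix a pair $\xi_1^{\eta_1},\xi_2^{\eta_2}$ in advance from a case analysis of how $(ii)(a)$ fails, and then try to ``arrange the choice so that $\sigma_1$ begins with one of the $\xi_i^{\eta_i}$''. This arrangement need not be possible: the first arrows of the four half-strings are dictated by the two maximal strings, and they may all avoid your pre-chosen $\xi_i$ (e.g.\ at a $4$-valent $m$ you might pick the two outgoing arrows as your $\xi_i$ while both maximal strings happen to pass through $m$ along the shape $a^{-1}b$ built from the two incoming arrows). The paper sidesteps this entirely with a single observation: the failure of $(ii)(a)$ is \emph{equivalent} to the statement that no string with source $m$ is maximal at its $m$-end. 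Hence once $\nu$ is fixed, one simply picks any $\alpha^\varepsilon$ with $t(\alpha^\varepsilon)=m$ making $\nu\alpha^\varepsilon$ a string; no pre-selection or matching is needed, and your three-way case split on $(ii)(a)$ collapses to one line. This also makes the half-strings $\mu_1,\mu_2,\rho_1,\rho_2$ automatically non-lazy, which the paper uses.

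For the edge case $\nu=e_m$ you flag: the paper handles this cleanly by first asking whether three of the four half-strings leave $m$ along distinct arrows. If so, gentleness at $m$ forces a relation among two of them, giving $\gamma,\delta$ at $m$ itself with $\nu=e_m$, and the third arrow serves as $\alpha$. If not, two half-strings share a first arrow and $\nu$ is non-lazy. Finally, your worry about $\alpha\in\{\gamma,\delta\}$ when $v\neq m$ dissolves under $(o)$: since $\gamma^{-1}\nu$ and $\delta\nu$ are strings based at $m$, having $s(\gamma)=m$ or $t(\delta)=m$ would produce a string through $m$ twice, so $\gamma,\delta$ are not incident to $m$ and hence differ from $\alpha$.
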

	\begin{proof}[Proof]
		Let $m$ be as supposed. Note that none of the arrows incident to $m$ are loops by $(o)$. Since $(ii)(a)$ is not satisfied, there is no maximal string $\mu$ such that $s(\mu) = m$. By $(o)$, there must be at least one maximal string passing through $m$, and since $(ii)(b)$ is not satisfied, there must be more than one. 
		
		Let $\mu$ and $\rho$ be two such strings. As they both pass through $m$ at least once, we can divide each of them into two strings $-$ let say $\mu_1, \mu_2$ and $\rho_1, \rho_2$ $-$ such that:
		\begin{itemize}
			\item[•] $\mu = \mu_2 \mu_1^{-1}$  and $\rho = \rho_2 \rho_1^{-1}$;
			
			\item[•] $s(\rho_i) = s(\mu_i) = m$ for $i = 1,2$.
		\end{itemize}
		Note that none of these strings are lazy.
		If three of these strings start with different arrows, then by gentleness of $\mathcal{Q}$, there exists $\alpha, \delta, \gamma \in Q_1$ such that $t(\gamma) = s(\delta) = s(\alpha^\varepsilon) = m$ and $\delta \gamma \in I$. Hence, by taking $\nu = e_m$, we conclude.
		\begin{center}
			$\displaystyle \xymatrix@R=1em @C=1em{
				\bullet \ar@{~}[rrr] & & & \bullet \ar[rrdd]_\gamma & & &  & \bullet \ar@{~}[rrr] & & & \bullet \\
				& & & & \ar@{--}@/^/[rr] & & & & & & \\
				& & & & &  m  \ar[rruu]_\delta& & & & &  \\
				& & & & & & & & & & \\
				& & & & & \bullet \ar@{-}[uu]^\alpha \ar@{~}[rrr] & & &\bullet & &
			}$
		\end{center}
		Otherwise, among those strings, there exist at least two strings, say $\mu_1$ and $\rho_1$, that leave $m$ by the same arrow. We can assume that $\mu_1 \neq \rho_1$; otherwise, we could replace $\mu_1$ and $\rho_1$ by $\mu_2$ and $\rho_2$ which must be distinct as $\rho \neq \mu$. Let $\nu$ be the common substrings of $\rho_1$ and $\mu_1$ maximal by inclusion such that $s(\nu) = m$. We can write $\rho_1 = \rho' \nu$ and $\mu_1 = \mu' \nu$. Remark that $\rho'$ and $\mu'$ are not lazy: both of them cannot be lazy as $\mu_1 \neq \rho_1$ and if for instance $\rho'$ is lazy then $\rho$ is not maximal by inclusion, as $\mu_1 \rho_2^{-1}$ is a string passing through $m$ strictly containing $\rho$ as a substring. 
		
		By gentleness of $\mathcal{Q}$, up to exchanging the roles of $\mu_1$ and $\rho_1$, we have $\rho'$ starting with $\delta \in Q_1$ and $\mu'$ starting with $\gamma \in Q_1$ such that $s(\delta) = t(\gamma) = t(\nu)$ and $\delta \gamma \in I$. So $\delta \nu$ and $\gamma^{-1} \nu$ are strings.
		\begin{center}
			$\displaystyle \xymatrix@R=0.5em @C=0.5em{
				& & & & & & \bullet & & & & \\
				& & & & & & &  & & & \\
				& & & & & & \bullet \ar@{~}[uu]^{\rho'} & & & &  \\
				& & & & & & \ar@{--}@/^/[rd] & & & & \\
				& & & & & & s(\nu) \ar[uu]^{\delta} & & \bullet \ar[ll]^{\gamma} \ar@{~}[rr]^{\mu'} & & \bullet \\
				& & & & & & & & & &  \\
				& & & & \bullet \ar@{~}[rruu]\ar@{~}@<0.1ex>[rruu] \ar@{~}@<0.2ex>[rruu]^\nu & & & & & & \\
				& & &  m \ar@<0.1ex>@{-}[ru] \ar@<0.2ex>@{-}[ru] \ar@{-}[ru]& & & & & & &  \\
				& & \bullet \ar@{-}[ru]^\alpha & & & & & & & & \\
				& & & & & & & & & & \\
				\bullet  \ar@{~}[rruu] & & & & & & & & & & }$
		\end{center}
		Moreover by using the fact that there is no maximal string starting at $m$, then there exists $\alpha \in Q_1$ such that $\mu_1 \alpha^\varepsilon$ is a string. So $\nu \alpha^\varepsilon$ is a string. We conclude.
	\end{proof}
	\begin{prop} \label{not2}
		Let $m$ be a vertex of $\mathcal{Q}$ that does not satisfy $(ii)$. Then $\mathscr{C}_{\mathcal{Q},m}$ is not canonically Jordan recoverable.
	\end{prop}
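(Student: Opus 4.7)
My plan is to directly exhibit two non-isomorphic representations $X, X' \in \mathscr{C}_{\mathcal{Q},m}$ sharing the same generic Jordan form data; this is in fact stronger than required, defeating Jordan recoverability itself and hence canonical Jordan recoverability. The argument will split according to whether $m$ satisfies condition $(o)$. If $(o)$ does not hold at $m$, I would follow the proof of Lemma \ref{1imply0} to extract two distinct strings $\rho, \nu \in \Sigma_{\mathcal{Q}}(m)$ sharing both endpoints, and then mimic Example \ref{JRnotCJRex}: combining $M(\rho) \oplus M(\nu)$ with a suitable swap produces non-isomorphic representations whose generic Jordan forms collide. So I may as well assume $(o)$ holds.

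In that case, Lemma \ref{stringstruct0} furnishes a string $\nu$ starting at $m$ and three distinct arrows $\alpha, \gamma, \delta$ with $\delta\gamma \in I$ such that $\delta\nu$, $\gamma^{-1}\nu$ and $\nu\alpha^\varepsilon$ are all strings through $m$. Since $(o)$ allows prolonging $\nu\alpha^\varepsilon$ on the left by $\delta$ or $\gamma^{-1}$, the strings $\delta\nu\alpha^\varepsilon$ and $\gamma^{-1}\nu\alpha^\varepsilon$ are also valid. I then set
\begin{align*}
X &= M(\delta\nu) \oplus M(\nu\alpha^\varepsilon) \oplus M(\gamma^{-1}\nu\alpha^\varepsilon) \oplus M(\gamma^{-1}\nu),\\
X' &= M(\delta\nu\alpha^\varepsilon) \oplus M(\nu) \oplus M(\gamma^{-1}\nu\alpha^\varepsilon) \oplus M(\gamma^{-1}\nu),
\end{align*}
which lie in $\mathscr{C}_{\mathcal{Q},m}$, differ by the swap $M(\delta\nu) \oplus M(\nu\alpha^\varepsilon) \leftrightarrow M(\delta\nu\alpha^\varepsilon) \oplus M(\nu)$, and are non-isomorphic by Krull--Schmidt. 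This pair is modelled on the counter-example from Example \ref{minnot3}. The easy part is the dimension check $\underline{\dim}(X) = \underline{\dim}(X')$: using $(o)$, the ``extra'' vertices $s(\alpha^\varepsilon), t(\delta), s(\gamma)$ all lie outside $\mathsf{Supp}_0(\nu)$, and a direct count, allowing the possible coincidence $t(\delta) = s(\gamma)$, shows each vertex receives identical total dimension on both sides.

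The hard part, which I anticipate as the main obstacle, is to verify that $\mathsf{GenJF}(X) = \mathsf{GenJF}(X')$. My plan is to build explicit generic nilpotent endomorphisms of $X$ and $X'$ and compare the resulting Jordan tuples vertex by vertex. The commutative-square constraints along the arrows of $\nu$, together with the relation $\delta\gamma \in I$, force the same pattern of ranks of $N_q$ and its iterates at $m$, at $t(\nu)$, and at the internal vertices of $\nu$; at the peripheral vertices $t(\delta), s(\gamma), s(\alpha^\varepsilon)$ each dimension is at most two and a short direct calculation produces matching partitions. The conceptual key is that the two auxiliary summands $M(\gamma^{-1}\nu)$ and $M(\gamma^{-1}\nu\alpha^\varepsilon)$ are precisely what mediates the swap while preserving the generic Jordan form: the extra degree of freedom created by splitting $M(\delta\nu\alpha^\varepsilon) \oplus M(\nu)$ into $M(\delta\nu) \oplus M(\nu\alpha^\varepsilon)$ is absorbed into the kernel/image constraints imposed by $\delta\gamma \in I$ at $t(\nu)$. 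Once this rank agreement is established at every vertex, $\mathsf{GenJF}(X) = \mathsf{GenJF}(X')$ follows and the failure of canonical Jordan recoverability is immediate.
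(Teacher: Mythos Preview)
Your main construction under $(o)$ is exactly the paper's: the same two representations $X$ and $X'$ (the paper writes $Y$ for your $X'$), built from Lemma \ref{stringstruct0}, with the generic Jordan form computed to be $(3,1)$ on $\mathsf{Supp}_0(\nu)$, $(2)$ at $s(\alpha^\varepsilon)$ and $s(\gamma)$, and $(1)$ at $t(\delta)$. Your plan for the rank computation is correct in outline; the paper carries it out by first listing, via Proposition \ref{morph}, the graph maps between the four summands and then reading off the Jordan data.

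There is, however, a genuine gap in your reduction step. When $(o)$ fails you propose to extract two strings $\rho,\nu$ sharing both endpoints (which the contrapositive argument in Lemma \ref{1imply0} does supply) and then ``mimic Example \ref{JRnotCJRex}'' via a ``suitable swap.'' But Example \ref{JRnotCJRex} exhibits no pair of representations with equal $\mathsf{GenJF}$: it shows instead that the generic representation with Jordan data $((1),(1))$ is $P_1$ rather than $S_1\oplus S_2$, an argument about \emph{canonical} recoverability only. Merely having two strings with common endpoints gives no obvious swap, and the paper's later treatment of the case where $(o)$ fails (Proposition \ref{not0}, needed for Theorem \ref{2ndmain2}) requires a substantial case analysis through Lemmas \ref{stringstruct1} and \ref{stringstruct2}, distinguishing non-minuscule vertices, cyclic configurations, and band-adjacent configurations. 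For the present proposition you do not need that stronger conclusion: since not-$(o)$ implies not-$(i)$ by Lemma \ref{1imply0}, you can simply invoke Proposition \ref{not1} to dispose of this branch, which is precisely what the paper does (splitting on $(i)$ rather than on $(o)$). Your attempt to prove non-Jordan-recoverability uniformly is a reasonable instinct---and indeed Remark \ref{0andnot2} records that the $(o)$-holding branch already yields this---but the $(o)$-failing branch requires substantially more than you have sketched.
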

	\begin{proof}[Proof]
		Thanks to the Proposition \ref{not1}, we know that if $m$ does not satisfy $(i)$, then $\mathscr{C}_{\mathcal{Q},m}$ is not canonically Jordan recoverable.
		
		So suppose that $m$ satisfies $(i)$. Hence we have, from properties of $m$ and the previous lemma, there exist a string $\nu$ with $s(\nu) = m$ and three distinct arrows $\alpha, \gamma, \delta \in Q_1$ such that $\delta \nu$, $\gamma^{-1} \nu$ and $\nu \alpha^\varepsilon$ are strings and $\delta \gamma \in I$.
		
		From that fact, inspired by Example \ref{minnot3}, we can consider in $\mathscr{C}_{\mathcal{Q},m}$:
		\begin{itemize}
			\item[•] $X = M(\delta \nu) \oplus M(\nu \alpha^{\varepsilon}) \oplus M ( \gamma^{-1} \nu \alpha^{\varepsilon}) \oplus M(\gamma^{-1} \nu)$;
			
			\item[•] $Y = M(\delta \nu \alpha^{\varepsilon}) \oplus M(\nu) \oplus  M ( \gamma^{-1} \nu \alpha^{\varepsilon}) \oplus M(\gamma^{-1} \nu)$.
		\end{itemize}
		Let us suppose that $\alpha$ is an incoming arrow of $m$ ($\varepsilon = -1$). By hypothesis, we can note that the only substring of $\nu$ that is both on the top and at the bottom of $\nu$ is itself. In the following figures, we highlight the significant substrings that give us the endomorphisms of $X$ and of $Y$ that can be defined from the morphisms between their summands. We specify the morphisms by their substrings (via Proposition \ref{morph}). In addition, we recall we read a string like a sequence of compositions of functions.
		
		Here is the form of the indecomposable summands of $X$
		\begin{center}
			$\displaystyle \xymatrix@R=1em @C=1em{
				& & &  &  & &  &\bullet \ar[ld]^\alpha & \bullet \ar[rd]_\gamma &   & &  & \ar[ld]^\alpha \bullet & \bullet \ar[rd]_\gamma & &    \\
				& \bullet \ar[ld]_\delta \ar@{~}[rr]_\nu & & m   & \bullet \ar@{~}[rr]_\nu & & m & & & \bullet \ar@{~}[rr]^\nu & & m  & & & \bullet \ar@{~}[rr]_\nu & & m   \\
				\bullet & & &   & & & & & & & & & & & & & 
			}$

			\hfill \hfill \hfill $M(\delta \nu)$ \hfill \hfill \hfill $M(\nu \alpha)$ \hfill \hfill \hfill $M(\gamma^{-1} \nu \alpha)$ \hfill \hfill  \hfill  $M(\gamma^{-1} \nu)$ \hfill \hfill \hfill \hfill
		\end{center}
		and here is a description of the morphisms between summands of $X$.
		\begin{center}
			$\displaystyle \xymatrix@R=1em @C=1em{
				& & M(\nu \alpha)  \ar@<-0.1ex>[rrd] \ar[rrd] \ar@<0.1ex>[rrd]^{\nu \alpha} & & \\
				M(\delta \nu) \ar@<-0.1ex>[rru] \ar[rru] \ar@<0.1ex>[rru]^\nu \ar@<0.1ex>[rrd] \ar[rrd] \ar@<-0.1ex>[rrd]_\nu & & & & M(\gamma^{-1} \nu \alpha)\\
				& & M(\gamma^{-1} \nu) \ar@<0.1ex>[rru] \ar[rru] \ar@<-0.1ex>[rru]_{\gamma^{-1} \nu} & &}$
		\end{center}
		Now let us see the form of the indecomposable summands of $Y$
		\begin{center}
			$\displaystyle \xymatrix@R=1em @C=1em{
				& & & &\bullet \ar[ld]^\alpha &  & & & \bullet \ar[rd]_\gamma &   & &  & \ar[ld]^\alpha \bullet & \bullet \ar[rd]_\gamma & & &   \\
				& \bullet \ar[ld]^\delta \ar@{~}[rr]_\nu & & m  &   & \bullet \ar@{~}[rr]_\nu & & m &  & \bullet \ar@{~}[rr]_\nu & & m  & &  & \bullet \ar@{~}[rr]_\nu & & m  \\
				\bullet & & & &  & & & & & & & & & & & &
			}$\\
			\hfill \hfill \hfill $M(\delta \nu \alpha)$ \hfill  \hfill $M(\nu)$ \hfill \hfill  $M(\gamma^{-1} \nu \alpha)$ \hfill \hfill   $M(\gamma^{-1} \nu)$ \hfill \hfill \hfill \hfill
		\end{center}
		and a description of the morphisms between summands of $Y$.
		\begin{center}
			$\displaystyle \xymatrix@R=1em @C=1em{
				& & M(\delta \nu \alpha)  \ar@<-0.1ex>[rrd] \ar[rrd] \ar@<0.1ex>[rrd]^{\nu \alpha} & & \\
				M(\nu)  \ar@<0.1ex>[rrd] \ar[rrd] \ar@<-0.1ex>[rrd]_\nu & & & & M(\gamma^{-1} \nu \alpha)\\
				& & M(\gamma^{-1} \nu) \ar@<0.1ex>[rru] \ar[rru] \ar@<-0.1ex>[rru]_{\gamma^{-1} \nu} & &}$
		\end{center}
		Thanks to these descriptions, after calculations, we get $\mathsf{GenJF}(X) = \mathsf{GenJF}(Y) = (\lambda^q)$ with:
		\begin{itemize}
			\item[•] $\lambda^q = (3,1)$ for $q \in \mathsf{Supp}(\nu)$;
			
			\item[•] $\lambda^q = (2)$ for $q = s(\beta)$ or $q = s(\gamma)$;
			
			\item[•] $\lambda^q = (1)$ for $q = t(\delta)$;
			
			\item[•] $\lambda^q = (0)$ otherwise.
		\end{itemize}
		If $\alpha$ was an outgoing arrow ($\varepsilon = 1$), a similar observation can be made and we can construct $X$ and $Y$ with the same behavior.
		
		That is why $\mathscr{C}_{\mathcal{Q},m}$ is not canonically Jordan recoverable.
	\end{proof}
	\begin{remark}\label{0andnot2}
		If we read carefully the previous proof, we can highlight we proved that if $m$ satisfies $(o)$ but not $(ii)$, then $\mathscr{C}_{\mathcal{Q},m}$ is not Jordan recoverable.
	\end{remark}
	Let us recap the proof of the main result.
	\begin{proof}[Proof of Theorem \ref{main2}]
		Thanks to Proposition \ref{1et2a} and Proposition \ref{1et2b}, we highlighted that the conditions $(i)$ and $(ii)$ are sufficient. Moreover we proved with Proposition \ref{not1} and Proposition \ref{not2} these conditions are necessary. We conclude we have the desired equivalence.
	\end{proof}
	
	\subsection{Proof of Theorem \ref{2ndmain}}
	\label{sub:proof2}
	
	From the previous work, we can deduce the following proposition.
	\begin{prop} \label{1*and2}
		Let $m$ be a vertex of $\mathcal{Q}$ such that $m$ satisfies $(i*)$ and $(ii)$. Then $\mathscr{C}_{\mathcal{Q},m}$ is Jordan recoverable.
	\end{prop}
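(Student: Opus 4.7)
The plan is to split the argument according to which branch of condition $(ii)$ is satisfied, since by assumption at least one of $(ii)(a)$ or $(ii)(b)$ holds.

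First, if $(ii)(a)$ holds, then the hypotheses of Corollary \ref{JFin1*and2} are met, and it delivers Jordan recoverability of $\mathscr{C}_{\mathcal{Q},m}$ immediately.

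Suppose instead that $(ii)(b)$ holds, so that there is a unique maximal string $\rho_0$ of $\mathcal{Q}$ passing through $m$. By Lemma \ref{1imply0}, $(i*)$ forces $(o)$; in particular no band passes through $m$, and $\rho_0$ meets each vertex at most once. By maximality and uniqueness, every string in $\Sigma_{\mathcal{Q}}(m)$ is a substring of $\rho_0$, and so (by Theorem \ref{BR}) every indecomposable of $\mathscr{C}_{\mathcal{Q},m}$ is a string module $M(\sigma)$ for some substring $\sigma$ of $\rho_0$ containing $m$. I then form the subquiver $\mathcal{R} = (R, \{0\})$ with $R_0 = \mathsf{Supp}_0(\rho_0)$ and $R_1 = \mathsf{Supp}_1(\rho_0)$, exactly as in the proof of Proposition \ref{1et2b}; it is an $A_n$-Dynkin quiver with $n = \ell(\rho_0) + 1$.

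The key point is that for every $X \in \mathscr{C}_{\mathcal{Q},m}$ and every arrow $\alpha \in Q_1 \setminus R_1$, the component $X_\alpha$ vanishes: writing $X \cong \bigoplus_\sigma M(\sigma)^{d_\sigma}$ with $\sigma$ a substring of $\rho_0$ containing $m$, each $M(\sigma)_\alpha$ is zero because $\alpha$ is not an arrow of $\sigma$. Consequently the commutative square imposed by such an $\alpha$ on any $N \in \mathsf{NEnd}(X)$ reads $0 = 0$ and is vacuous, while for $q \in Q_0 \setminus R_0$ we have $X_q = 0$. The restriction $X \mapsto X|_{\mathcal{R}}$ therefore defines a bijection $\mathscr{C}_{\mathcal{Q},m} \longleftrightarrow \mathscr{C}_{\mathcal{R},m}$ which identifies $\mathsf{NEnd}(X)$ with $\mathsf{NEnd}(X|_{\mathcal{R}})$, and hence $\mathsf{GenJF}(X)$ with $\mathsf{GenJF}(X|_{\mathcal{R}})$ (up to the zero parts attached to vertices of $Q_0 \setminus R_0$). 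Given $X, Y \in \mathscr{C}_{\mathcal{Q},m}$ with $\mathsf{GenJF}(X) = \mathsf{GenJF}(Y)$, Theorem \ref{GPT} applied to $\mathcal{R}$ gives $X|_{\mathcal{R}} \cong Y|_{\mathcal{R}}$, and since the extra arrows all carry zero maps this isomorphism lifts back to $X \cong Y$ in $\mathscr{C}_{\mathcal{Q},m}$.

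The only genuinely new piece of work beyond the lemmas already established is the verification that the arrows of $Q_1 \setminus R_1$ impose no constraint on nilpotent endomorphisms of objects of $\mathscr{C}_{\mathcal{Q},m}$; I expect this to be the main point of care, and it rests crucially on $(ii)(b)$ to confine every indecomposable summand of $X$ to a string module supported on $\rho_0$.
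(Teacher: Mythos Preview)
Your proof is correct and follows essentially the same route as the paper: split on $(ii)(a)$ versus $(ii)(b)$, invoke Corollary~\ref{JFin1*and2} in the first case, and in the second case reduce to the $A_n$ subquiver $\mathcal{R}$ built from the unique maximal string through $m$ and apply Theorem~\ref{GPT}. The paper's version of the $(ii)(b)$ case is terser---it simply observes that the argument of Proposition~\ref{1et2b} goes through because arrows outside $\mathcal{R}$ are irrelevant---whereas you spell out explicitly why $X_\alpha = 0$ for $\alpha \notin R_1$ and why this makes the restriction to $\mathcal{R}$ identify nilpotent endomorphisms; this extra care is appropriate here since you only have $(i*)$ rather than $(i)$, so the vertex-level argument of Proposition~\ref{1et2b} (that one endpoint of $\alpha$ lies outside $\mathsf{Supp}_0(\rho_0)$) is not directly available and your summand-by-summand justification is the right replacement.
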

	\begin{proof}[Proof]
		If $m$ satisfies $(i*)$ and $(ii)(b)$, we can note that the proof of Proposition \ref{1et2b} still works because we do not care about all the other arrows that could interact with the quiver $\mathcal{R}$ as these arrows cannot be reached by any string passing through $m$. As Theorem \ref{GPT} gives us the Jordan recoverability of $\mathscr{C}_{\mathcal{R},m}$, we conclude.
		
		If $m$ satisfies $(i*)$ and $(ii)(a)$, we can apply Proposition \ref{1*et2aGenJF} and recall that we get Jordan recoverability of $\mathscr{C}_{\mathcal{Q},m}$ by Corollary \ref{JFin1*and2}. Hence we conclude that $\mathscr{C}_{\mathcal{Q},m}$ is Jordan recoverable in either way.
	\end{proof}
	Like we did to prove the first theorem, now we have to show that if $m$ does not satisfy $(i*)$ or $(ii)$ then $\mathscr{C}_{\mathcal{Q},m}$ is not Jordan recoverable. 
	
	To begin with, we will show that if $m$ does not satisfy $(i*)$ then $\mathscr{C}_{\mathcal{Q},m}$ is not Jordan recoverable. To do it properly, we need to know the existence of certain sorts of strings when $(i*)$ is not satisfied by first making stronger assumptions about $m$. Then, by relaxing hypotheses over $m$, we will conclude what we want.
	
	With these ideas in mind, let first assume that $m$ is not minuscule.
	\begin{lemma} \label{stringstruct1}
		Let $m$ be a non minuscule vertex of $\mathcal{Q}$. Then there exists a string $\rho = \phi^{-1} \Gamma \phi$ with:
		\begin{itemize}
			\item[$(\alpha)$] $\phi$ is a string such that:
			\begin{itemize}
				\item[$(\alpha 1)$] $s(\phi) = m$;
				
				\item[$(\alpha 2)$] $\phi$ is passing through any vertex of $\mathcal{Q}$ at most once;
			\end{itemize}
			\item[$(\beta)$] $\Gamma$ is a non-lazy  string such that:
			\begin{itemize}
				\item[$(\beta 1)$] $s(\Gamma) = t(\Gamma) = t(\phi)$;
				
				\item[$(\beta 2)$] if $\phi$ is not lazy, then the first and the last arrow of $\Gamma$ are in relation;
				
				\item[$(\beta 3)$] $\Gamma$ is passing through $t(\phi)$ exactly twice; 
				
				\item[$(\beta 4)$] $\Gamma$ is passing through any vertex $q \neq t(\phi)$ at most once;
				
				\item[$(\beta 5)$] $\mathsf{Supp}_0(\phi) \cap \mathsf{Supp}_0(\Gamma) = \{t(\phi)\}$.
			\end{itemize}
		\end{itemize}
	\end{lemma}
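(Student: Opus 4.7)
The plan is to realize $\rho$ as a minimal-length witness to the non-minusculeness of $m$, and then read $\phi$ and $\Gamma$ off from its structure.

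First, I would use non-minusculeness of $m$ to produce a non-lazy closed string at $m$. By Proposition~\ref{minstring} combined with Theorem~\ref{BR}, there exists an indecomposable representation $X$ with $\dim X_m \geqslant 2$: this is either a string module $M(\sigma)$ with $\sigma$ passing through $m$ twice, or a band module $M(\omega, \lambda, d)$ where $\omega$ meets $m$ (in which case a cyclic shift of $\omega$ is a non-lazy string starting and ending at $m$). After passing to a minimal substring having $m$ only at its endpoints, one obtains a non-lazy closed string $\tau$ at $m$. Now consider the set $\mathcal{S}$ of triples $(v, \sigma, \psi)$ with $v \in Q_0$, $\sigma$ a non-lazy string with $s(\sigma) = t(\sigma) = v$ and $v$ appearing only at the endpoints of $\sigma$, $\psi$ a string from $m$ to $v$, such that $\psi^{-1}\sigma\psi$ is a valid string of $\mathcal{Q}$. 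The triple $(m, \tau, e_m)$ shows $\mathcal{S} \neq \varnothing$. Select $(v_0, \Gamma, \phi) \in \mathcal{S}$ minimizing $\ell(\phi) + \ell(\Gamma)$, and set $\rho := \phi^{-1}\Gamma\phi$.

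Properties $(\alpha 1)$ and $(\beta 1)$ are immediate from the definition of $\mathcal{S}$, and $(\beta 3)$ follows from requiring $v$ to appear only at the endpoints of $\sigma$. The conditions $(\alpha 2)$, $(\beta 4)$, and $(\beta 5)$ would all be established by minimality of $\ell(\phi) + \ell(\Gamma)$: for instance, if $\phi$ revisited some vertex $w$, one could reroute through $w$ to obtain a shorter string $\phi'$ from $m$ to $v_0$, yielding a triple of smaller total length in $\mathcal{S}$; similarly an internal repetition of $\Gamma$ would allow shrinking $\Gamma$, and an intersection of $\Gamma$ with $\mathsf{Supp}_0(\phi) \setminus \{v_0\}$ would allow replacing $v_0$ by the intersection vertex with an appropriately smaller $\sigma$. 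The subtlety at each rerouting is that the resulting shortcut must remain a valid string, i.e., must respect both reducedness and the ideal $I$.

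The hardest point is $(\beta 2)$. Assume $\phi$ is non-lazy and let $\alpha$ denote its last arrow; let $\beta_1$ and $\beta_{\ell(\Gamma)}$ denote the first and last arrows of $\Gamma$. At the vertex $t(\phi)$, the string $\rho$ has two transitions: from $\alpha$ into $\beta_1$ (between $\phi$ and $\Gamma$), and from $\beta_{\ell(\Gamma)}$ back into $\alpha$ with reversed orientation (between $\Gamma$ and $\phi^{-1}$). Both transitions being valid forces $\alpha$ to be paired with neither $\beta_1$ nor $\beta_{\ell(\Gamma)}$ by the ideal $I$. Since gentleness partitions the arrows incident at $t(\phi)$ into at most two $I$-relation-pairs, and the signs $\varepsilon$ labeling $\alpha$, $\beta_1$, $\beta_{\ell(\Gamma)}$ dictate which side of each pair is involved, a short case analysis on the four possible sign configurations shows that $\beta_1$ and $\beta_{\ell(\Gamma)}$ must themselves be $I$-paired, which is exactly $(\beta 2)$. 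The main obstacle is precisely this interplay between the rerouting arguments (which require admissibility to be preserved at every shortcut) and the arrow-pairing analysis at $t(\phi)$; the minimization has to be set up with enough care that both kinds of pathologies are ruled out simultaneously, and the gentleness of $\mathcal{Q}$ is used crucially in both steps.
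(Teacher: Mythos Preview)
Your overall strategy—minimize a suitable quantity, then read off $\phi$ and $\Gamma$—is the right shape, but it diverges from the paper's argument and, as written, has a real gap.

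\textbf{Where your $(\beta 2)$ argument breaks.} You claim that $(\beta 2)$ follows purely from ``$\rho=\phi^{-1}\Gamma\phi$ is a string'' together with gentleness and a sign case analysis. It does not. With $\alpha$ the last arrow of $\phi$ (say incoming at $t(\phi)$), gentleness forces each of $\beta_1$ and $\beta_{\ell(\Gamma)}$ to be either the unique outgoing arrow $c^*$ with $c^*\alpha\notin I$ or the unique incoming arrow $a^*\neq\alpha$. In the two sign configurations $(\varepsilon_1,\varepsilon_\ell)=(+1,-1)$ and $(-1,+1)$ you get $\beta_1=\beta_\ell$, and then there is no $I$-relation between them to speak of. Ruling these cases out is exactly where minimality (via $(\beta 4)$) must enter: if $\beta_1=\beta_\ell$ then the other endpoint of that arrow is an internal repetition of $\Gamma$, and you can move the basepoint there to shrink the total length. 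So $(\beta 2)$ is not independent of the rerouting arguments; it sits on top of them.

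\textbf{The rerouting arguments themselves are not justified.} You correctly flag that every shortcut must remain a valid string (reduced and avoiding $I$), but you do not explain why any of your proposed shortcuts for $(\alpha 2)$, $(\beta 4)$, $(\beta 5)$ actually achieves this. For instance, if $\Gamma$ revisits an internal vertex $w$ at positions $i<j$, neither ``excise the loop $\Gamma[i{:}j]$'' nor ``recenter at $w$ with $\Gamma'=\Gamma[i{:}j]$'' is automatically admissible at $w$; one needs a careful gentleness argument comparing the four incident arrows $\gamma_i,\gamma_{i+1},\gamma_j,\gamma_{j+1}$, and it is not obvious that at least one splice always works.

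\textbf{How the paper avoids all of this.} The paper minimizes a single object: the length of a closed string $\chi$ at $m$ passing through $m$ exactly twice (such strings exist by non-minusculeness). It then lets $p$ be the first index at which a vertex of $\chi$ is repeated, writes $\phi=\chi[0{:}j]$ and $\Gamma=\chi[j{:}p]$ where $v_j=v_p$. With this ``first repetition'' choice, the conditions $(\alpha 2)$, $(\beta 3)$, $(\beta 4)$, $(\beta 5)$ are \emph{immediate} from the distinctness of $v_0,\dots,v_{p-1}$—no rerouting is needed. Only the stringness of $\rho=\phi^{-1}\Gamma\phi$ and $(\beta 2)$ require work, and both come from one clean observation: since $\phi$ is non-lazy one has $p<k$, so $\chi$ has a tail $\chi[p{:}k]$ continuing from $v_p$ back to $m$; if the last arrow of $\Gamma$ and the last arrow of $\phi$ were the $I$-forbidden pair at $v_j$, one could splice $\chi[p{+}1{:}k]$ directly onto $\phi$ to obtain a strictly shorter element of $S(m)$, contradicting minimality. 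Gentleness then forces the forbidden pair to be the first and last arrows of $\Gamma$, giving $(\beta 2)$ and simultaneously certifying that $\phi^{-1}\Gamma$ is a valid concatenation. The presence of the tail is exactly what makes this single shortcut argument go through, and your triple-minimization discards it.
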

	\begin{proof}[Proof]
		Let $m$ be as assumed. By Proposition \ref{minstring}, there exists a string $\Xi$ passing through $m$ at least twice. Among all the substrings of $\Xi$, there exists a string $\mu$ such that $s(\mu) = t(\mu) = m$ and $\mu$ is passing exactly twice through $m$. Thus the set $S(m)$ of strings starting and ending at $m$, and passing through $m$ exactly twice is not empty. Let us consider $\chi \in S(m)$ to be a string minimal with respect to the ordering by length. Note that $\chi$ is not lazy. We will prove that from $\chi$ we can extract a string $\rho$ as claimed.
		
		Let us write $\chi = \alpha_k^{\varepsilon_k} \cdots \alpha_1^{\varepsilon_1}$. Consider $v_0 = m$ and $v_i = t(\alpha_i^{\varepsilon_i})$ for any $i \in \{1, \ldots, k\}$. Let:
		\begin{itemize}
			\item[•] $p = \min(i \in \{1, \ldots, k\} \mid v_i \in \{v_0, \ldots, v_{i-1} \})$;
			
			\item[•] $j$ be the only integer in $\{0, \ldots, p-1\}$ such that $v_j = v_p$.
		\end{itemize}
		Note that $p$ is well-defined as $\chi$ passes at least twice through $m$. Consider $\phi = \alpha_j^{\varepsilon_j} \cdots \alpha_1^{\varepsilon_1} $ and $\Gamma = \alpha_p^{\varepsilon_p} \cdots \alpha_{j+1}^{\varepsilon_{j+1}}$. 
		\begin{center}
			$\displaystyle \xymatrix@R=1em @C=2em{
				& & & & & &  \bullet \ar@{~}@/^2pc/[dddd] \\
				& & \phi & & & &  \\
				m & & \ar@{~}@/_/ '[rr] '[ll]  & & \bullet \ar@{-}[rruu]^{\alpha_{p}^{\varepsilon_{p}}} \ar@{-}[rrdd]_{\alpha_{j+1}^{\varepsilon_{j+1}}} & & \Gamma  \\
				& & & & & &  \\
				& & & & & & \bullet }$
		\end{center}
		Now we will check that $\phi$ and $\Gamma$ satisfy the claimed conditions. 
		
		First let us show that $\rho = \phi^{-1} \Gamma \phi$ is a string. If $\phi$ is lazy, then we can conclude clearly. Assume that $\phi$ is not lazy. Note that $p < k$. At the vertex $v_j= v_p$, if $p > j+1$, we have at least three distinct incident arrows $\alpha_j$, $\alpha_{j+1}$ and $\alpha_p$. By gentleness of $\mathcal{Q}$, this means that exactly one among those following words $\alpha_{j+1}^{\varepsilon_{j+1}} \alpha_j^{\varepsilon_j}$, $\alpha_p^{\varepsilon_p} \alpha_j^{\varepsilon_j}$ or $\alpha_{j+1}^{\varepsilon_{j+1}} \alpha_p^{\varepsilon_p}$ is not a string. 
		
		Obviously $\alpha_{j+1}^{\varepsilon_{j+1}} \alpha_j^{\varepsilon_j}$ is a string by construction, as $\chi$ is a string. If $\alpha_p$ and $ \alpha_j$ are in relation at $v_j$, then, by gentleness of $\mathcal{Q}$, $\alpha_{p+1}$ and $\alpha_j$ are not in relation at $v_j$ and so $\chi' = \alpha_k^{\varepsilon_k} \cdots \alpha_{p+1}^{\varepsilon_{p+1}} \alpha_j^{\varepsilon_j} \cdots \alpha_1^{\varepsilon_1}$ is a string of $S(m)$ and $\ell(\chi') < \ell(\chi)$. This is a contradiction with the minimality of $\chi$.
		\begin{center}
			$\displaystyle \xymatrix@R=1em @C=2em{
				& & & & & & & &  \bullet \ar@{~}@/^2pc/[dddd] \\
				& & & \phi & & & & &  \\
				m \ar@{~}[rrrr] \ar@{~}@<0.1ex>[rrrr] \ar@{~}@<-0.1ex>[rrrr] & &  & & \bullet \ar@{-}@<0.1ex>[rr] \ar@{-}@<-0.1ex>[rr] \ar@{-}[rr]^(.3){\alpha_j^{\varepsilon_j}} & \ar@{--}@/^1pc/[rru] & \bullet \ar@{-}@<0.1ex>[ddll] \ar@{-}@<-0.1ex>[ddll] \ar@{-}[ddll]^(.7){\alpha_{p+1}^{\varepsilon_{p+1}}} \ar@{-}[rruu]^(.7){\alpha_{p}^{\varepsilon_{p}}} \ar@{-}[rrdd]_(.7){\alpha_{j+1}^{\varepsilon_{j+1}}} & & \Gamma \\
				& & & \mathbf{\chi'} & & \ar@{--}@/_1pc/[rr] & & & \\
				& & & &  \bullet \ar@{~}@<0.1ex>@/^1pc/[lllluu] \ar@{~}@<-0.1ex>@/^1pc/[lllluu] \ar@{~}@/^1pc/[lllluu]& & & & \bullet }$
		\end{center}
		Hence $\alpha_{j+1}$ and $\alpha_p$ are in relation at $v_j$ and $\rho = \phi^{-1} \Gamma \phi$ is a string. Note that we come to the same conclusion if $p = j+1$. Finally we can easily check by our construction that $\phi$ and $\Gamma$ satisfy the conditions we claimed.
		\begin{center}
			$\displaystyle \xymatrix@R=1em @C=2em{
				& & & & & &  \bullet \ar@{~}@/^2pc/[dddd] \\
				& & \phi & & & &  \\
				m & & \ar@{~}@/_/ '[rr] '[ll]  & & \bullet \ar@{-}[rruu]^{\alpha_{p}^{\varepsilon_{p}}} \ar@{-}[rrdd]_{\alpha_{j+1}^{\varepsilon_{j+1}}} & & \Gamma  \\
				& & & & & \ar@{--}@/_1pc/[uu] &  \\
				& & & & & & \bullet }$
		\end{center}
		\hfill
	\end{proof}
	\begin{prop} \label{nonminresult}
		Let $m$ be a vertex of $\mathcal{Q}$ which is not minuscule. Then $\mathscr{C}_{\mathcal{Q},m}$ is not Jordan recoverable.
	\end{prop}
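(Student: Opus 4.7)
The plan is to exhibit two non-isomorphic representations $X, Y \in \mathscr{C}_{\mathcal{Q},m}$ sharing the same generic Jordan form data. First, apply Lemma~\ref{stringstruct1} to obtain a string $\rho = \phi^{-1}\Gamma\phi$ with the stated properties; in particular $M(\rho)$ is an indecomposable representation of $\mathscr{C}_{\mathcal{Q},m}$ with $\dim M(\rho)_m = 2$. The closed walk $\Gamma$ at $v_j = t(\phi)$ is the combinatorial object to exploit.

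The cleanest construction arises when $\Gamma$ is a band of $\mathcal{Q}$: then the one-parameter family $\{M(\Gamma, \lambda, 1)\}_{\lambda \in \mathbb{K}^\times}$ from Definition~\ref{bandrepdef} all lies in $\mathscr{C}_{\mathcal{Q},m}$ (provided $m \in \mathsf{Supp}_0(\Gamma)$, which holds when $\phi$ is lazy), each has a one-dimensional vector space at every vertex of $\mathsf{Supp}_0(\Gamma)$, and so every nilpotent endomorphism vanishes at each vertex. Thus $\mathsf{GenJF}(M(\Gamma,\lambda,1))$ does not depend on $\lambda$, while Theorem~\ref{BR} tells us that distinct values of $\lambda$ give pairwise non-isomorphic representations. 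Any two distinct values then yield the desired counterexample.

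When $\Gamma$ is not a band (for instance when $\phi$ is non-lazy, in which case $(\beta 2)$ forces the junction at $v_j$ to be in relation, or when $\Gamma = \ell$ is a single loop with $\ell^2 \in I$), we construct $X$ and $Y$ as direct sums of string modules by splitting $\rho$ at a suitable point and rearranging the summands. In the simplest such setting, for the quiver consisting of a single vertex $m$ with a loop $\ell$ satisfying $\ell^2 \in I$, take $X = M(\ell)$ and $Y = S_m^{\oplus 2}$: both have dimension vector $(2)$ at $m$, both belong to $\mathscr{C}_{\mathcal{Q},m}$, they are non-isomorphic by Theorem~\ref{BR}, and a direct computation shows that the space of nilpotent endomorphisms in each case admits a generic member with Jordan block of size $2$, giving $\mathsf{GenJF}(X) = \mathsf{GenJF}(Y) = ((2))$. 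The general non-band case follows the same pattern, with $X$ and $Y$ built from $M(\rho)$, $M(\phi)$, and string modules arising from splitting $\Gamma$, arranged so as to equalize dimension vectors.

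The verifications amount to (i) both $X, Y \in \mathscr{C}_{\mathcal{Q},m}$, (ii) $\vdim X = \vdim Y$, (iii) $X \not\cong Y$, and (iv) $\mathsf{GenJF}(X) = \mathsf{GenJF}(Y)$. The main obstacle is (iv) in the non-band regime: one must check that the commutative square relations on nilpotent endomorphisms yield identical Jordan forms in $X$ and $Y$ despite their distinct indecomposable decompositions. This rests on the symmetry of the construction together with the gentleness relations at $v_j$, which ensure that the rank conditions imposed on a generic nilpotent endomorphism at each vertex coincide for both $X$ and $Y$.
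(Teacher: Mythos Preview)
Your proposal has a genuine gap: the ``general non-band case'' is not proved. You give one worked instance (the single loop with $\ell^2\in I$), then assert that ``the general non-band case follows the same pattern, with $X$ and $Y$ built from $M(\rho)$, $M(\phi)$, and string modules arising from splitting $\Gamma$, arranged so as to equalize dimension vectors.'' That is not a construction, and the later appeal to ``symmetry of the construction together with the gentleness relations at $v_j$'' does not substitute for one. In particular, you never handle the case where $\phi$ is non-lazy (where $(\beta2)$ forces $\Gamma$ not to be a band), nor the case where $\phi$ is lazy but $\ell(\Gamma)>1$ with $\Gamma$ not a band. Equalizing dimension vectors is not enough: you must also verify that the generic nilpotent endomorphisms of the two representations produce the same Jordan type at every vertex, and nothing you have written pins this down beyond the two toy examples.

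The paper's argument avoids band modules entirely and gives explicit, uniform choices. If $\ell(\Gamma)=1$ it takes $X=M(\phi)^2$ and $Y=M(\rho)$; the graph map on $M(\rho)$ associated to the substring $\phi$ makes the check that $\mathsf{GenJF}=((2))$ on $\mathsf{Supp}_0(\rho)$ immediate. If $\ell(\Gamma)>1$, writing $\Gamma=\beta_p^{\xi_p}\cdots\beta_1^{\xi_1}$, it takes the two \emph{indecomposable} string modules
\[
X=M\bigl(\phi^{-1}\beta_p^{\xi_p}\cdots\beta_2^{\xi_2}\bigr),\qquad
Y=M\bigl(\beta_{p-1}^{\xi_{p-1}}\cdots\beta_1^{\xi_1}\phi\bigr),
\]
each obtained from $\rho$ by deleting one end arrow of $\Gamma$. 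Because all vector spaces in $X$ and $Y$ are one-dimensional, $\mathsf{GenJF}(X)=\mathsf{GenJF}(Y)=((1))$ on their common support with no computation required. Your band-module idea is correct in the narrow sub-case where $\phi$ is lazy and $\Gamma$ happens to be a band, but it is unnecessary: the string-only construction above covers that sub-case too and extends uniformly to all the cases you left open.
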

	\begin{proof}[Proof]
		Let $m$ be as supposed. Thanks to the previous lemma, there exists a string $\rho = \phi^{-1} \Gamma \phi$ satisfying the conditions stated in the previous lemma.
		
		If $\ell(\Gamma) = 1$, let $X = M(\phi)^2$ and $Y = M(\rho)$ two representations of $\mathscr{C}_{\mathcal{Q},m}$. By the fact that $Y$ admits an endomorphism defined by the substring $\phi$ (Proposition \ref{morph}), we can clearly get $\mathsf{GenJF}(X) = \mathsf{GenJF}(Y) = (\lambda^q)_{q \in Q_0}$ such that:
		\begin{itemize}
			\item[•] $\lambda^q = (2)$ for $q \in \mathsf{Supp}(\rho)$;
			
			\item[•] $\lambda^q = (0)$ otherwise.
		\end{itemize}
		Thus $\mathscr{C}_{\mathcal{Q},m}$ is not Jordan recoverable.
		
		Otherwise, inspired by Example \ref{minnot1}, recalling that we wrote $\Gamma = \beta_p^{\xi_p} \cdots \beta_1^{\xi_1}$, let $X = M(\phi^{-1} \beta_p^{\xi_p} \cdots \beta_2^{\xi_2})$ and $Y = M(\beta_{p-1}^{\xi_{p-1}} \cdots \beta_1^{\xi_1} \phi)$ be two representations of $\mathscr{C}_{\mathcal{Q},m}$. In this case, we get $\mathsf{GenJF}(X) = \mathsf{GenJF}(Y) = (\lambda^q)_{q \in Q_0}$ such that:
		\begin{itemize}
			\item[•] $\lambda^q = (1)$ for $q \in \mathsf{Supp}(\rho)$;
			
			\item[•] $\lambda^q = (0)$ otherwise.
		\end{itemize}
		Again, we conclude that $\mathscr{C}_{\mathcal{Q},m}$ is not Jordan recoverable.
		
		Hence in each case, we get what we want.
	\end{proof}
	Now we can prove that for any vertex $m$ which does not satisfy $(o)$, $\mathscr{C}_{\mathcal{Q},m}$ is not Jordan recoverable.
	
	We need to prove beforehand the following lemma.
	\begin{lemma}\label{stringstruct2}
		Let $m$ be a minuscule vertex of $\mathcal{Q}$ which is not satisfying $(o)$. Then at least one of the following assertions is true:
		\begin{itemize}
			\item[$(\alpha)$] There exist two non-lazy strings $\nu$ and $\mu$ such that:
			\begin{itemize}
				\item[$(\alpha 1)$] $s(\nu) = s(\mu) = m$ and $t(\nu) = t(\mu) \neq m$;
				
				\item[$(\alpha 2)$] $\mu \nu^{-1}$ is a string;
				
				\item[$(\alpha 3)$] The last arrows of $\nu$ and $\mu$ are in relation;
				
				\item[$(\alpha 4)$] $\nu$ and $\mu$ are strings that are passing through any vertex of $\mathcal{Q}$ at most once;
				
				\item[$(\alpha 5)$] $\mathsf{Supp}_0(\nu) \cap \mathsf{Supp}_0(\mu) = \{m, t(\nu)\}$.
			\end{itemize}
			
			\item[$(\beta)$] There exist a non-lazy string $\phi$ and a band $\Gamma$ such that:
			\begin{itemize}
				\item[$(\beta 1)$] $s(\phi) = m$ and $t(\phi) = s(\Gamma) = t(\Gamma) \neq m$;
				
				\item[$(\beta 2)$] $\Gamma \phi$ is a string;
				
				\item[$(\beta 3)$] the last arrows of $\Gamma$ and $\phi$ are in relation;
				
				\item[$(\beta 4)$] $\phi$ is a string that is passing through any vertex of $\mathcal{Q}$ at most once;
				
				\item[$(\beta 5)$] $\Gamma$ is a string which is passing through any vertex $q \neq t(\phi)$ of $\mathcal{Q}$ at most once and is passing through $t(\phi)$ exactly twice;
				
				\item[$(\beta 6)$] $\mathsf{Supp}_0(\phi) \cap \mathsf{Supp}_0
				(\Gamma) = \{t(\phi)\}$.
			\end{itemize}
		\end{itemize}
	\end{lemma}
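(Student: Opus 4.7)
My plan is to mimic the minimality argument from Lemma~\ref{stringstruct1}, adapted to a minuscule vertex. Since $m$ is minuscule, Proposition~\ref{minstring} tells us that any string through $m$ visits $m$ exactly once. The failure of $(o)$ then yields a string $\Xi$ through $m$ which revisits some vertex $w\neq m$. The plan is to split $\Xi$ at its unique copy of $m$ as $\Xi = \Xi_2 \Xi_1^{-1}$, with $s(\Xi_1)=s(\Xi_2)=m$, and then to distinguish two configurations: either one of $\Xi_1,\Xi_2$ already revisits some vertex, or each $\Xi_i$ is simple and the repeat of $w$ arises from $\mathsf{Supp}_0(\Xi_1) \cap \mathsf{Supp}_0(\Xi_2) \supsetneq \{m\}$.

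In the latter configuration I would choose a pair of initial segments $\nu$ of $\Xi_1$ and $\mu$ of $\Xi_2$ with $t(\nu)=t(\mu)=w\neq m$, minimising $\ell(\nu)+\ell(\mu)$. By this minimality, $\nu$ and $\mu$ are simple, $\mathsf{Supp}_0(\nu)\cap\mathsf{Supp}_0(\mu)=\{m,w\}$, and their last arrows differ; moreover $\mu\nu^{-1}$ is a substring of $\Xi$, hence a string, so $(\alpha 1),(\alpha 2),(\alpha 4),(\alpha 5)$ hold at once. That the last arrows are in relation at $w$, which is $(\alpha 3)$, will come from the same gentleness-and-minimality argument as in Lemma~\ref{stringstruct1}: were these two arrows not in relation, one could swap the arrow used at $w$ and prepend or append a further step to produce a strictly shorter witness to the failure of $(o)$, contradicting minimality.

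In the first configuration there is a string $\chi$ with $s(\chi)=m$ that revisits some vertex; I would take $\chi = \alpha_k^{\varepsilon_k}\cdots\alpha_1^{\varepsilon_1}$ of minimal length with this property. Let $v_i$ denote the vertices along $\chi$ (with $v_0=m$), let $p$ be the least index with $v_p\in\{v_0,\dots,v_{p-1}\}$, and let $j<p$ be the unique index with $v_j=v_p$; by minimality $k=p$, so $v_j=v_k$ is the only repeat. Setting $\phi=\alpha_j^{\varepsilon_j}\cdots\alpha_1^{\varepsilon_1}$ and $\Gamma=\alpha_k^{\varepsilon_k}\cdots\alpha_{j+1}^{\varepsilon_{j+1}}$, minusculeness forces $j\geq 1$ (otherwise $v_0=v_k$ would give two visits of $m$), so $\phi$ is non-lazy, and minimality forces $\mathsf{Supp}_0(\phi)\cap\mathsf{Supp}_0(\Gamma)=\{v_j\}$. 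This delivers $(\beta 1),(\beta 2),(\beta 4),(\beta 5),(\beta 6)$, leaving the two tasks of showing that $\Gamma$ is actually a band and that $\alpha_k$ is in relation with $\alpha_j$, i.e.\ $(\beta 3)$.

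The decisive and most delicate step is the local analysis at $v_j$. The three arrows $\alpha_j,\alpha_{j+1},\alpha_k$ are incident to $v_j$; gentleness constrains how they pair via membership in $I$, and the case analysis that I would run is parallel to the one in Lemma~\ref{stringstruct1}. Either the boundary composition of $\Gamma$ at $v_j$ avoids $I$, in which case $\Gamma^2$ is a string, primitivity of $\Gamma$ is automatic because $\Gamma$ is a simple loop, so $\Gamma$ is a band, and the pairing structure at $v_j$ then forces $\alpha_k$ to be in relation with $\alpha_j$, delivering $(\beta)$; or the boundary composition lies in $I$, in which case minimality of $\chi$ forbids certain local rearrangements, and one can re-cut $\chi$ at $v_j$ to exhibit two simple strings from $m$ to a common vertex beyond $v_j$, falling back on case $(\alpha)$. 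Managing this case analysis, and in particular keeping track of the three orientations $(\varepsilon_j,\varepsilon_{j+1},\varepsilon_k)$ together with the gentleness pairings at $v_j$, is the main technical obstacle; it is essentially the same delicate point that already appeared in the proof of Lemma~\ref{stringstruct1}.
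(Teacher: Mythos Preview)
Your overall dichotomy --- split $\Xi$ at its unique visit to $m$ and ask whether a repeat occurs inside one half or only across the two halves --- is sound and corresponds to the paper's two cases. The paper organises things slightly differently: it takes one inclusion-minimal substring $\chi$ of $\Xi$ that still passes through $m$ and through some $q\neq m$ twice, and then asks whether both endpoints of $\chi$ equal $q$ (giving $(\alpha)$) or one endpoint is $m$ (giving $(\beta)$).

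The genuine gap is in your arguments for $(\alpha 3)$ and $(\beta 3)$. You try to transplant the minimality--shortcut mechanism from Lemma~\ref{stringstruct1}, but that mechanism worked only because there $\chi$ continued \emph{past} the repeated vertex ($t(\chi)=m\neq v_j$), so one could re-route through $v_j$ and still land at $m$ with a strictly shorter string. In your $(\beta)$ configuration the minimal $\chi$ \emph{ends} at $v_j=v_k$: there is no tail to shortcut through. Your proposed fallback ``re-cut $\chi$ at $v_j$ to exhibit two simple strings from $m$ to a common vertex beyond $v_j$'' is ill-defined, since $\chi$ contains only one strand from $m$; nothing in your setup lets you fall back on case $(\alpha)$ here. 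Similarly, for $(\alpha 3)$ the vague ``swap the arrow at $w$ and produce a strictly shorter witness'' is not a move that your minimality on $\ell(\nu)+\ell(\mu)$ supports.

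The missing idea is that minusculeness of $m$ does this step in one line. For $(\alpha 3)$: if the last arrows of $\nu$ and $\mu$ were not in relation at $t(\nu)$, then $\nu^{-1}\mu$ would be a string starting and ending at $m$, contradicting Proposition~\ref{minstring}. For $(\beta 3)$: if the last arrows of $\Gamma$ and $\phi$ were not in relation at $t(\phi)$, then $\phi^{-1}\Gamma\phi$ would be a string starting and ending at $m$, the same contradiction. With $(\beta 3)$ in hand, gentleness (the last arrow of $\phi$ is in relation with the last arrow of $\Gamma$ but not with its first arrow) forces the first and last arrows of $\Gamma$ \emph{not} to be in relation, so $\Gamma^2$ is a string; primitivity then follows from minimality of $\chi$. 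In particular your ``case (II)'' never arises and no fallback is needed.
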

	\begin{proof}[Proof]
		Let $m$ be as supposed. Then there exists a string $\Xi$ passing through $m$ and passing through another vertex at least twice. Let us consider $\chi$ a minimal substring of $\Xi$ by inclusion such that $\chi$ is passing through $m$ and there exists another vertex $-$ let us say $q \neq m$ $-$ such that $\chi$ is passing through it at least twice.
		
		First, it is easy to check that $\chi$ passes through $q$ exactly twice, and $s(\chi) = q$ or $t(\chi) = q$. Then we have two cases.
		\begin{itemize}
			\item[$(\alpha)$] If $s(\chi) = t(\chi) = q$, then we can write $\chi = \mu \nu^{-1}$ where $\nu$ and $\mu$ are the two distinct non-lazy strings such that $s(\nu) = s(\mu) = m$ and $t(\nu) = t(\mu) = q$. We claim that $\nu$ and $\mu$ are satisfying what we want.
		\end{itemize}
		\begin{center}
			$\displaystyle \xymatrix@R=1em @C=1em{
				& & \bullet \ar@{~}[rr] & & \bullet \ar@{-}[rrdd]& & \\
				& & & \mu & & & \\
				m \ar@{-}[rruu] \ar@{-}[rrdd] & & & & & & q \\
				& & & \nu & & \ar@{--}@/^1pc/[uu] & \\
				& & \bullet \ar@{~}[rr]& & \bullet \ar@{-}[rruu] & & }$
		\end{center}
		\begin{itemize}
			\item[]
			\begin{itemize}
				\item[$(\alpha 1)$] $s(\nu) = s(\mu) = m$ and $t(\nu) = t(\mu) = q \neq m$ for free;
				
				\item[$(\alpha 2)$] $\chi = \mu \nu^{-1}$ is a string for free;
				
				\item[$(\alpha 3)$] if the last arrows of $\nu$ and $\mu$ are not in relation, then $\nu^{-1} \mu$ is a non-lazy string starting and ending at $m$; this is a contradiction with the fact that $m$ is minuscule. So the last arrows of $\nu$ and $\mu$ are in relation;
				
				\item[$(\alpha 4)$] if $\nu$ is passing through a vertex $v$ at least twice, as $\nu$ is a substring of $\chi$, we get a contradiction with the minimality of $\chi$. As the same argument works for $\mu$, then $\nu$ and $\mu$ are passing through any vertex at most once;
				
				\item[$(\alpha 5)$] if there exists $v \in \mathsf{Supp}_0(\nu) \cap \mathsf{Supp}_0(\mu)$ and $v \neq q,m$, we could consider $\nu_0$ a substring of $\nu$ and $\mu_0$ a substring of $\mu$ such that $s(\nu_0)= s(\mu_0) = m$ and $t(\nu_0) = t(\mu_0) = v$. Thus $\mu_0 \nu_0^{-1}$ is a strict substring of $\chi$ and that is a contradiction with the minimality of $\chi$. 
			\end{itemize}
			
			\item[$(\beta)$] Otherwise, up to inversing $\chi$, we can consider that $s(\chi) = m$ and $t(\chi) = q$. Let $\phi$ be the minimal substring of $\chi$ such that $s(\phi) = m$ and which is passing through $q$. Let $\Gamma$ be the string such that $\chi = \Gamma \phi$. We claim that $\phi$ and $\Gamma$ are satisfying what we want.
		\end{itemize}
		\begin{center}
			$\displaystyle \xymatrix@R=1em @C=1em {
				&  & & & & & \bullet\\
				&  \phi & & & & & \\
				m \ar@{~}[rr] & &  \bullet \ar@{-}[rr]_{\alpha_j} & \ar@{--}@/^1pc/[rru] & q \ar@{-}[rruu] \ar@{-}[rrdd] & & \Gamma \\
				& & & & & & \\
				& &  & & &&   \bullet \ar@{~}@/_2pc/[uuuu]}$
		\end{center}
		\begin{itemize}
			\item[]
			\begin{itemize}
				\item[$(\beta 1)$] For free, we have $s(\phi) = m$ and $t(\phi) = s(\Gamma) = t(\Gamma) = q \neq m$;
				
				\item[$(\beta 2)$] $\mu' = \Gamma \phi$ for free;
				
				\item[$(\beta 3)$] if the last arrows of $\Gamma$ and $\phi$ are not in relation at $q$, then $\phi^{-1} \Gamma \phi$ is a string which starts and ends at $m$. This is a contradiction with the fact that $m$ is minuscule. Thus the last arrows of $\Gamma$ and $\phi$ are in relation. Furthermore we can prove that $\Gamma$ is a band:
				\begin{itemize}
					\item[•] $s(\Gamma) = t(\Gamma)$;
					
					\item[•] As the last arrows of $\Gamma$ and $\phi$ are in relation at $t(\Gamma)$ and the first arrow of $\Gamma$ is not in relation with the last arrow of $\phi$ at $t(\Gamma)$, by the gentleness of $\mathcal{Q}$, the last and the first arrow of $\Gamma$ are not in relation. Then $\Gamma^j$ is a string;
					
					\item[•] If $\Gamma$ is not primitive, we can write $\Gamma = \Upsilon^j$ for some $j > 1$. So $\Upsilon \phi$ is a substring of $\chi$ which passes through a vertex $t(\Gamma) = s(\Upsilon) = t(\Upsilon)$ at least twice. This is a contradiction with the minimality of $\chi$. So $\Gamma$ is primitive.
				\end{itemize}
				
				\item[$(\beta 4)$] $\phi$ cannot pass through a vertex at least twice as $\phi$ is a strict substring of $\chi$;
				
				\item[$(\beta 5)$] if $\Gamma$ passes through another vertex $v \neq q$ at least twice, let $\Gamma_0$ be the substring of $\Gamma$ obtained by deleting its last arrow. Then $\Gamma_0 \phi$ is a substring of $\chi$ which is passing through $v$ at least twice. This is a contradiction.
				
				If $\Gamma$ passes through $q$ at least thrice, then $\Gamma_0$ passes through $q$ at least twice and we conclude again with a contradiction. Consequently $\Gamma$ is a string passing through any vertex $v \neq t(\phi)$ at most once and passing through $t(\phi)$ exactly twice;
				
				\item[$(\beta 6)$] Trivially, $\{ t(\phi) \} \subseteq \mathsf{Supp}_0(\phi) \cap \mathsf{Supp}_0(\Gamma)$. Let us assume that there exists a vertex $v \neq t(\phi)$ such that $v \in \mathsf{Supp}_0(\phi) \cap \mathsf{Supp}_0(\Gamma)$. Let $\Gamma_1$ be a substring of $\Gamma$ starting with the first arrow of $\Gamma$ and such that $t(\Gamma_1) = v$. Then $\Gamma_1 \phi$ is a substring of $\chi$ which is passing through a vertex at least twice. This is a contradiction again with the minimality of $\chi$.
			\end{itemize}
		\end{itemize}
		Either way, we conclude.
	\end{proof}
	\begin{prop} \label{not0}
		Let $m$ be a vertex of $\mathcal{Q}$ which does not satisfy $(o)$. Then $\mathscr{C}_{\mathcal{Q},m}$ is not Jordan recoverable.
	\end{prop}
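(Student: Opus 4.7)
The plan is to split into two exhaustive cases and, in each, exhibit two non-isomorphic representations of $\mathscr{C}_{\mathcal{Q},m}$ sharing the same generic Jordan form, thereby refuting Jordan recoverability.

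First, if $m$ is not minuscule, I would apply Proposition \ref{nonminresult} directly and be done. So I may assume throughout the rest of the argument that $m$ is minuscule but fails $(o)$; that is, some string through $m$ visits another vertex more than once. I would then invoke Lemma \ref{stringstruct2}, which splits this situation into two mutually exclusive structural subcases $(\alpha)$ and $(\beta)$.

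In subcase $(\alpha)$, let $\mu$ and $\nu$ be the two non-lazy strings from $m$ to a common endpoint $q\neq m$, with their last arrows in relation, $\mu\nu^{-1}$ a string, and vertex supports meeting only at $\{m,q\}$. I would set
\[
X = M(\mu)\oplus M(\nu),\qquad Y = M(\mu\nu^{-1})\oplus M(e_m).
\]
A quick count shows $\underline{\dim}(X)=\underline{\dim}(Y)$: dimension $2$ at $m$ and at $q$, dimension $1$ at the intermediate vertices of $\mu$ and $\nu$, dimension $0$ elsewhere. To compute $\mathsf{GenJF}$ I would use Proposition \ref{morph} to list the graph maps between the indecomposable summands of $X$ and of $Y$, build an explicit nilpotent endomorphism in each case mimicking the construction of Proposition \ref{1*et2aGenJF}, and check that its Jordan data attains the maximum allowed by the dimension vector; the relation between the last arrows of $\mu$ and $\nu$ is what matches up the two computations. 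The Jordan form one reads off is $\lambda^q=(2)$ at $m$ and at $q$, $\lambda^q=(1,1)$ \textbf{or} $(2)$ at intermediate vertices (I would verify that the same shape comes out for both $X$ and $Y$), and $(0)$ elsewhere. Non-isomorphism of $X$ and $Y$ is clear from Theorem \ref{BR}, since $M(\mu\nu^{-1})$ is indecomposable while $X$ has only length-less-than-$\ell(\mu\nu^{-1})$ string summands.

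In subcase $(\beta)$, let $\phi$ be the string with $s(\phi)=m$ and $\Gamma$ the band at $t(\phi)$ such that $\Gamma\phi$ is a string and the last arrows of $\phi$ and $\Gamma$ are in relation. Fix any $\lambda\in\mathbb{K}^{*}$ and set
\[
X = M(\Gamma\phi),\qquad Y = M(\phi)\oplus M(\Gamma,\lambda,1).
\]
Again $\underline{\dim}(X)=\underline{\dim}(Y)$: the string $\Gamma\phi$ visits $t(\phi)$ exactly twice and every other vertex at most once, while $M(\Gamma,\lambda,1)$ with $d=1$ identifies its two endpoint copies, so both representations have dimension $2$ at $t(\phi)$, dimension $1$ at the other vertices of $\mathsf{Supp}_0(\phi)\cup\mathsf{Supp}_0(\Gamma)$, and $0$ elsewhere. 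I would then exhibit nilpotent endomorphisms of $X$ and of $Y$ (for $Y$, shifting the $M(\phi)$-summand onto the $t(\phi)$-copy inside $M(\Gamma,\lambda,1)$; for $X$, using the obvious nilpotent induced by the repeated vertex of $\Gamma\phi$) and verify that both attain the same Jordan shape, single-block at every vertex of the joint support, thanks to the relation at the last arrows. Non-isomorphism follows since $X$ is a string representation and $Y$ has a band summand.

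The main obstacle will be the bookkeeping in subcase $(\beta)$: band representations are more delicate than string representations, and it is easy to miscount Jordan blocks. I would control this by picking adapted bases as in Example \ref{JRex1} and Proposition \ref{1*et2aGenJF}, tracking how the nilpotent endomorphism $N$ acts on each basis vector, and then comparing to the optimum predicted by Theorem \ref{GPT3}. Once both subcases yield two non-isomorphic representations in $\mathscr{C}_{\mathcal{Q},m}$ with equal $\mathsf{GenJF}$, the proposition follows from Definition \ref{JRdef}.
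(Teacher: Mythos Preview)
Your overall plan---reduce to the minuscule case via Proposition \ref{nonminresult}, then split along Lemma \ref{stringstruct2}---matches the paper. The problem is with the specific witnesses you choose in both subcases.

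In subcase $(\alpha)$, take the situation where the first arrows of $\mu$ and $\nu$ at $m$ are \emph{both} outgoing (say $\alpha_1$ and $\beta_1$ with $s(\alpha_1)=s(\beta_1)=m$, $\alpha_1\neq\beta_1$); nothing in Lemma \ref{stringstruct2} forbids this, and $\mu\nu^{-1}$ is still a string since at $m$ one letter is direct and the other inverse. Then the only common substrings of $\mu$ and $\nu$ are $e_m$ and $e_q$, and $e_m$ is on top of both strings and at the bottom of neither, so every graph map between $M(\mu)$ and $M(\nu)$ is zero at $m$. Hence every nilpotent endomorphism of your $X=M(\mu)\oplus M(\nu)$ is zero at $m$, giving $\mathsf{GenJF}(X)^m=(1,1)$. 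For your $Y=M(\mu\nu^{-1})\oplus M(e_m)$, however, $e_m$ sits on top of $\mu\nu^{-1}$ (both neighbouring arrows point away from $m$), so there is a morphism $M(\mu\nu^{-1})\to M(e_m)$ nonzero at $m$, yielding $\mathsf{GenJF}(Y)^m=(2)$. Your $X$ and $Y$ therefore do not have the same generic Jordan form. The paper sidesteps this by taking two \emph{indecomposable} string modules---essentially $\mu\nu^{-1}$ with one end-arrow deleted, versus the same with the other end-arrow deleted---so that both $X$ and $Y$ have dimension $1$ at every vertex of $\mathsf{Supp}_0(\mu)\cup\mathsf{Supp}_0(\nu)$ and $\mathsf{GenJF}$ is automatically $((1),\ldots,(1))$ for each.

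In subcase $(\beta)$ there is a more basic obstruction: your $Y=M(\phi)\oplus M(\Gamma,\lambda,1)$ is \emph{not} in $\mathscr{C}_{\mathcal{Q},m}$. By $(\beta 1)$ and $(\beta 6)$ of Lemma \ref{stringstruct2} we have $m\in\mathsf{Supp}_0(\phi)$ but $m\notin\mathsf{Supp}_0(\Gamma)$, so the band summand $M(\Gamma,\lambda,1)$ is not supported at $m$, and $\mathscr{C}_{\mathcal{Q},m}$ requires every indecomposable summand to be. Even setting that aside, Proposition \ref{morph} only covers morphisms between string modules, so your plan to ``list the graph maps'' would not directly compute $\mathsf{NEnd}(Y)$. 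The paper avoids bands entirely here: writing $\Gamma'$ for $\Gamma$ minus its last arrow and $\phi'$ for $\phi$ minus its last arrow, it compares $X=M(\Gamma'\phi)^2$ with $Y=M(\phi')\oplus M(\Gamma'\Gamma\phi)$, both of which are sums of string modules supported at $m$, and both of which have $\mathsf{GenJF}=(2)$ on $\mathsf{Supp}_0(\Gamma'\phi)$ via a single explicit graph map.
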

	\begin{proof}[Proof] If $m$ is not minuscule, we can conclude from the Proposition \ref{nonminresult}.
		
		Let us suppose that $m$ is minuscule. By the previous lemma, we can construct a certain kind of string, and we have to distinguish the two cases.
		
		If $(\alpha)$ is true, then let us write $\mu = \alpha_k^{\varepsilon_k} \cdots \alpha_1^{\varepsilon_1}$ and $\nu = \beta_p^{\xi p} \cdots \beta_1^{\xi_1}$. Consider $X = M(\alpha_{k-1}^{\varepsilon_{k-1}} \cdots \alpha_1^{\varepsilon_1} \nu^{-1})$ and $Y = M(\mu (\beta_{p-1}^{\xi_{p-1}} \cdots \beta_1^{\xi_1})^{-1})$. We can easily check that $\mathsf{GenJF}(X) = \mathsf{GenJF}(Y) = (\lambda^q)_{q \in Q_0}$ with:
		\begin{itemize}
			\item[•] $\lambda^q = (1)$ for any $q \in \mathsf{Supp}_0(\mu) \cup \mathsf{Supp}_0(\nu)$;
			
			\item[•] $\lambda^q = (0)$ otherwise.
		\end{itemize}
		Thus $\mathscr{C}_{\mathcal{Q},m}$ is not Jordan recoverable.
		If $(\beta)$ is true, then let $\Gamma'$ be the string obtained from $\Gamma$ by deleting the last arrow of $\Gamma$ $-$ called $\delta$ in the drawing below $-$ and $\phi'$ be the string obtained for $\phi$ by deleting the last arrow $-$ called $\beta$. Note that by $(\beta 3)$ $\beta$ and $\delta$ are in relation at $t(\phi)$. If $\beta$ is ingoing to $t(\phi)$ then $\delta$ is outgoing from $t(\phi)$ and vice-versa. Inspired by Example \ref{minnot2}, we can consider $X = M(\Gamma' \phi)^2$ and $Y = M(\phi') \oplus M(\Gamma'\Gamma\phi)$. 
		It is clear enough how the endomorphisms of $X$ go. Let us see the morphisms between the summands of $Y$. We label the morphisms by substrings as an application of Theorem \ref{morph}.
		\begin{center}
			$\displaystyle \xymatrix@R=1em @C=1em{
				\bullet \ar@{~}[rr]_{\phi'} & & m & & \bullet \ar@{~}[rr]_{\Gamma'} & & t(\phi) \ar[rd]_\delta & & & & \bullet \ar[ld]^\beta \ar@{~}[rr]_{\phi'} & & m \\
				& & & & & & &  \bullet \ar@{~}[rr]_{\Gamma'} & & t(\phi) & & & & & }$\\
			\hfill \hfill \hfill  $\displaystyle M(\phi')$ \hfill \hfill \hfill \hfill \hfill $\displaystyle M(\Gamma' \Gamma \phi)$ \hfill \hfill \hfill \hfill \hfill \hfill \hfill \hfill \hfill\\
			$\displaystyle \xymatrix@R=1em @C=1em{
				M(\Gamma' \Gamma \phi) \ar@<-0.1ex>@(l,u) \ar@(l,u) \ar@<0.1ex>@(l,u)^{\Gamma'} \ar@<-0.1ex>[rr] \ar[rr] \ar@<0.1ex>[rr]^{\phi'} & & M(\phi')
			}$
		\end{center}
		From that, by simple calculation, we get $\displaystyle \mathsf{GenJF}(X) = \mathsf{GenJF}(Y) = (\lambda^q)_{q \in Q_0}$ where:
		\begin{itemize}
			\item[•] $\lambda^q = (2)$ for all $q \in \mathsf{Supp}(\Gamma' \phi)$;
			
			\item[•] $\lambda^q = (0)$ otherwise.
		\end{itemize} 
		In other cases (for example if $\beta$ is an outgoing arrow of $m'$, or if $\delta$ is an incoming arrow of $m'$), we can make similar observations, construct representations $X$ and $Y$ analogously, and we will obtain the same result.
		
		That is why $\mathscr{C}_{\mathcal{Q},m}$ is not Jordan recoverable.
	\end{proof}
	Now let us show another translation of the condition $(i*)$. By the previous proposition, we can suppose that $m$ satisfies $(o)$.
	\begin{lemma} \label{stringstruct3}
		Let $m$ be a vertex of $\mathcal{Q}$ satisfying $(o)$ but not $(i*)$. Then there exist three strings $\phi$, $\nu$ and $\mu$ such that:
		\begin{itemize}
			\item[$(\gamma 1)$] $\nu$ and $\mu$ are non-lazy;
			
			\item[$(\gamma 2)$] $s(\phi) = m$, $t(\phi) = s(\nu) = s(\mu)$ and $t(\nu) = t(\mu)$;
			
			\item[$(\gamma 3)$] $\nu \phi$ and $\mu \phi$ are strings of $\mathcal{Q}$;
			
			\item[$(\gamma 4)$] $\phi$, $\nu$ and $\mu$ are passing through any vertex of $\mathcal{Q}$ at most once;
			
			\item[$(\gamma 5)$] the first arrows and the last arrows of $\nu$ and $\mu$ are in relation at $s(\nu)$ and $t(\mu)$ respectively;
			
			\item[$(\gamma 6)$] $\mathsf{Supp}_0(\phi) \cap \mathsf{Supp}_0(\nu) = \mathsf{Supp}_0(\phi) \cap \mathsf{Supp}_0(\mu) = \{s(\nu)\}$ and $\mathsf{Supp}_0(\nu) \cap \mathsf{Supp}_0(\mu) = \{s(\nu), t(\nu)\}$;
		\end{itemize}
	\end{lemma}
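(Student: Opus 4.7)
The plan is to distill the triple $(\phi,\nu,\mu)$ from a minimal-size witness of the failure of $(i*)$, and then verify each of the six clauses in turn using $(o)$, the gentleness of $\mathcal{Q}$, and the minimality of the witness. Since $(i*)$ fails at $m$, there exist distinct strings $\rho_1 \neq \rho_2$ with $s(\rho_i)=m$ and $t(\rho_1)=t(\rho_2)$; pick a pair minimizing $\ell(\rho_1)+\ell(\rho_2)$ and let $\phi$ be the longest common initial segment read from $m$ outwards, so that $\rho_1 = \nu\phi$ and $\rho_2 = \mu\phi$ with $\nu$ and $\mu$ sharing no first arrow. This yields $(\gamma 2)$ and $(\gamma 3)$ directly. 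The non-laziness $(\gamma 1)$ follows from $(o)$: if, say, $\nu$ were lazy, then $\mu$ would be a non-lazy walk starting and ending at $t(\phi)$, making $\rho_2=\mu\phi$ a string through $m$ that visits $t(\phi)$ twice; the same argument rules out a lazy $\mu$. Clause $(\gamma 4)$ is immediate from $(o)$ applied to $\rho_1$ and $\rho_2$, inherited by their substrings $\phi,\nu,\mu$.

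For $(\gamma 6)$, the intersections with $\mathsf{Supp}_0(\phi)$ are forced by $(o)$, since any extra shared vertex would make $\rho_1$ or $\rho_2$ revisit a vertex. The identity $\mathsf{Supp}_0(\nu)\cap\mathsf{Supp}_0(\mu) = \{s(\nu),t(\nu)\}$ uses minimality: a third shared vertex $w \notin \{s(\nu),t(\nu)\}$ would give strict initial segments $\nu_1 \subsetneq \nu$ and $\mu_1 \subsetneq \mu$ ending at $w$, producing a strictly shorter pair $\nu_1\phi,\mu_1\phi$ that remains distinct (they diverge at their first arrow), contradicting minimality.

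The relation statements $(\gamma 5)$ split on whether $\phi$ is lazy. If $\phi$ has last arrow $\gamma^\delta$, at $v = t(\phi)$ we already have two valid transitions $\alpha^\varepsilon\gamma^\delta$ and $\beta^\xi\gamma^\delta$ (where $\alpha^\varepsilon,\beta^\xi$ denote the first arrows of $\nu,\mu$); a case analysis using the uniqueness clauses in Definition \ref{gentledef} and the bound of two incident arrows in each direction at $v$ forces $\alpha$ and $\beta$ to have opposite orientations at $v$ and to satisfy $\alpha\beta\in I$ (or $\beta\alpha\in I$). If $\phi$ is lazy, I would instead examine $\mu\nu^{-1}$, which has $v'=t(\nu)$ as both endpoints and passes through $m$; minimality rules out the reducedness failure $\alpha_1=\beta_1$ at $m$, so whenever the first arrows fail to be in relation the word $\mu\nu^{-1}$ is a valid string and $(o)$ is violated by its double visit to $v'$. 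The last-arrow claim is handled symmetrically: in the general case one examines $\phi^{-1}\nu^{-1}\mu\phi$, a closed walk at $m$ passing through $v'$ whose only new transition is at $v'$, and applies the same dichotomy $-$ reducedness via minimality eliminates mixed signs, and $(o)$ forces the remaining transition to be blocked by a relation.

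The main obstacle is packaging $(\gamma 5)$ uniformly. Each branching or merging vertex has four sign configurations for the two outer arrows, and the argument alternates between gentleness, minimality, and $(o)$ depending on whether we work at $t(\phi)$ or at $t(\mu)$ and on whether $\phi$ is lazy. Keeping the analyses at the two endpoints symmetric, and grouping the cases by which of these three principles is operative, is what prevents the bookkeeping from becoming unwieldy.
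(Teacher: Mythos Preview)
Your approach is correct and matches the paper's proof closely: both extract $\phi,\nu,\mu$ from a minimal pair of distinct strings from $m$ with a common endpoint (you minimize the sum of lengths while the paper uses a lexicographic order, but either works for all the contradiction arguments), and both then verify $(\gamma1)$--$(\gamma6)$ via $(o)$, gentleness at $t(\phi)$, and minimality in the same way. The only cosmetic discrepancies are your test walk $\phi^{-1}\nu^{-1}\mu\phi$ for the last-arrow relation, where the paper uses the shorter $\mu^{-1}\nu\phi$ (revisiting $t(\phi)$ rather than $m$), and your attribution of the distinctness of first arrows at $m$ to ``minimality'' when it follows more directly from the maximality of $\phi$.
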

	\begin{proof}[Proof]
		Let $m$ be as supposed. By the fact that $m$ does not satisfy $(i*)$, there exist two distinct strings $\Xi$ and $\Psi$ such that $s(\Xi) =s(\Psi)=m$ and $t(\Psi) = t(\Xi)$. Let us consider such a pair $(\chi, \psi)$ to be minimal in the following way: for any pair of distinct strings $(\sigma, \tau)$ such that $s(\sigma) = s(\tau) = m$ and $t(\sigma) = t(\tau)$, we have $\ell(\sigma) \geqslant \ell(\chi)$ and if $\ell(\sigma) = \ell(\chi)$ then $\ell(\tau) \geqslant \ell(\psi)$. Consequently $\ell(\chi) \leqslant \ell(\psi)$.
		
		Let $\phi$ be the common string maximal by inclusion of $\chi$ and $\psi$ such that $s(\phi) = m$. Let $\nu$ and $\mu$ be the strings such that $\chi = \nu \phi$ and $\psi = \mu \phi$. We will show that $\phi$, $\nu$ and $\mu$ are three strings as claimed.
		\begin{center}
			$\displaystyle \xymatrix@R=1em @C=1em{
				& & & & & \bullet \ar@{~}[rr]& &  \bullet \ar@{-}[rrdd] & & \\
				& \phi &  & & & & \nu & & & \\
				m \ar@{~}[rrr] & & & \bullet \ar@{-}[rruu] \ar@{-}[rrdd] & & & & & & \bullet \\
				& & & & & & \mu & & &\\
				& & & & & \bullet \ar@{~}[rr] & & \bullet \ar@{-}[rruu]& & \\ }$
		\end{center}
		\begin{itemize}
			\item[$(\gamma 1)$] As $\ell(\psi) \geqslant \ell(\chi)$, if $\mu$ is lazy then $\psi = \chi$ which is a contradiction with the fact that $\psi$ and $\chi$ are distinct. If $\nu$ is lazy, then $\psi = \mu \phi$ is a string passing through $s(\mu)$ at least twice. This is a contradiction with the fact that $m$ satisfies $(o)$. So $\nu$ and $\mu$ are non-lazy.
			
			\item[$(\gamma 2)$] By construction we have $s(\phi) = m$, $t(\phi) = s(\nu) = s(\mu)$ and $t(\nu) = t(\mu)$ for free;
			
			\item[$(\gamma 3)$] By construction $\chi = \nu \phi$ and $\psi = \mu \phi$ are strings of $\mathcal{Q}$;
			
			\item[$(\gamma 4)$] If there exists a vertex $q$ of $\mathcal{Q}$ such that one of the three strings $\phi$, $\nu$ or $\mu$ is passing through $q$ at least twice, then $\chi$ or $\psi$ is passing through $q$ at least twice. This is a contradiction with the fact that $m$ satisfies $(o)$. So $\phi$, $\nu$, and $\mu$ are passing through any vertex of $\mathcal{Q}$ at most once;
			
			\item[$(\gamma 5)$] If the last arrows of $\nu$ and $\mu$ are not in relation at $t(\nu)$, then $\mu^{-1} \nu \phi$ is a string passing through $m$ and through $s(\nu)$ at least twice. This is a contradiction with the fact that $m$ satisfies $(o)$. By noting that $\nu \phi$ and  $\mu \phi$ are strings of $\mathcal{Q}$, and the first arrows of $\nu$ and $\mu$ have to be distinct by maximality of $\phi$, if $\phi$ is non-lazy then the last arrow of $\phi$ is in relation with neither the first arrow of $\nu$ nor the first arrow of $\mu$ at $t(\phi)$; by gentleness of $\mathcal{Q}$ the first arrows of $\nu$ and $\mu$ have to be in relation at $s(\nu)$. 
			
			If $\phi$ is lazy and if the first arrows are not in relation at $s(\nu) = m$, then $\mu \nu^{-1}$ is a well-defined string of $\mathcal{Q}$ passing through $m$ and through $t(\nu)$ at least twice. This is a contradiction with the fact that $m$ satisfies $(o)$. Hence we conclude that the first arrows and the last arrows of $\nu$ and $\mu$ are in relation at $s(\nu)$ and $t(\nu)$ respectively; 
			
			\item[$(\gamma 6)$] It is clear that $\{s(\nu)\} \subseteq \mathsf{Supp}_0(\phi) \cap \mathsf{Supp}_0(\nu) $. If there exists a vertex $q \neq s(\nu)$ in $\mathsf{Supp}_0(\phi) \cap \mathsf{Supp}_0(\nu)$, then $\chi = \nu \phi$ is a string passing through $q$ at least twice. This is a contradiction with the fact that $m$ satisfies $(o)$. So $\mathsf{Supp}_0(\phi) \cap \mathsf{Supp}_0(\nu)  = \{s(\nu)\}$. Analogously we have $\mathsf{Supp}_0(\phi) \cap \mathsf{Supp}_0(\mu)  = \{s(\nu)\}$. 
			
			It is also clear that $\{s(\nu), t(\nu)\} \subseteq \mathsf{Supp}_0(\nu) \cap \mathsf{Supp}_0(\mu)$. Assume that there exists a vertex $t(\nu) \neq q \neq s(\nu)$ in $\mathsf{Supp}_0(\nu) \cap \mathsf{Supp}_0(\mu)$. Thanks to $(\gamma 4)$, we can consider $\nu_q$ (respectively $\mu_q$) the substring of $\nu$ (respectively $\mu$) starting at $s(\nu)$ and ending at $q$. Then $(\chi_q = \nu_q \phi, \psi_q = \mu_q \phi)$ is a pair of distinct strings such that $s(\chi_q) = s(\psi_q)$ and $t(\chi_q) = t(\psi_q) = q$. But $\ell(\chi_q) < \ell(\chi)$ by construction, which is a contradiction with the minimality of $(\chi, \psi)$.
			
			Hence we conclude that $\mathsf{Supp}_0(\nu) \cap \mathsf{Supp}_0(\mu) = \{s(\nu), t(\nu)\}$.\qedhere
		\end{itemize}
		\vspace*{-0.43cm}
	\end{proof}
	\begin{prop} \label{not1*}
		Let $m$ be a vertex of $\mathcal{Q}$ which does not satisfy $(i*)$. Then $\mathscr{C}_{\mathcal{Q},m}$ is not Jordan recoverable.
	\end{prop}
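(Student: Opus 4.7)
The plan is to reduce to the core combinatorial case and then construct an explicit counterexample to Jordan recoverability. If $m$ is not minuscule, the conclusion follows from Proposition \ref{nonminresult}; if $m$ is minuscule but violates $(o)$, then Proposition \ref{not0} applies. So we focus on the remaining case: $m$ satisfies $(o)$ but not $(i*)$. Applying Lemma \ref{stringstruct3}, we obtain three strings $\phi$, $\nu$, $\mu$ with the stated properties: $\phi$ connects $m$ to $t(\phi)$, and $\nu,\mu$ are non-lazy strings from $t(\phi)$ to a common endpoint $t(\nu)=t(\mu)$ meeting only at those two boundary vertices.

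Writing $\nu = \alpha_k^{\varepsilon_k}\cdots\alpha_1^{\varepsilon_1}$ and $\mu = \beta_p^{\xi_p}\cdots\beta_1^{\xi_1}$, and letting $\nu_-$, $\mu_-$ denote the strings obtained by deleting the last arrow, the plan is to take
\[X = M(\nu_-\phi)\oplus M(\mu\phi)\quad\text{and}\quad Y = M(\mu_-\phi)\oplus M(\nu\phi).\]
Each summand passes through $m$ via the prefix $\phi$, so $X, Y \in \mathscr{C}_{\mathcal{Q},m}$. Using condition $(\gamma 6)$ of Lemma \ref{stringstruct3}, which ensures that $\nu$ and $\mu$ have disjoint interiors (except at the two shared boundary vertices), one checks that $\underline{\dim}(X) = \underline{\dim}(Y)$. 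The representations are non-isomorphic since the four strings $\nu_-\phi$, $\mu\phi$, $\mu_-\phi$, $\nu\phi$ are pairwise distinct. In the degenerate case where $\nu$ or $\mu$ is a single arrow the construction collapses slightly but remains valid, and when both have length one the simpler pair $X=M(\nu\phi)$ and $Y=M(\mu\phi)$ already works, since all relevant vector spaces are one-dimensional and the Jordan forms are forced by the dimension vector alone.

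The main obstacle will be verifying that $\mathsf{GenJF}(X)=\mathsf{GenJF}(Y)$. The approach is to use Proposition \ref{morph} to enumerate the graph maps between summands in each decomposition, and thereby to describe the generic nilpotent endomorphisms. The crucial observation is that the constructions of $X$ and $Y$ are related by the symmetry $\nu \leftrightarrow \mu$, and the boundary relations $(\gamma 5)$ of Lemma \ref{stringstruct3} make this symmetry compatible with the gentle-algebra structure at both endpoints of the diamond formed by $\nu$ and $\mu$. As a result the graph-map structures of $X$ and of $Y$ should be in bijection, forcing the ranks---and hence the generic Jordan forms---at each vertex to coincide. Combined with $X \not\cong Y$, this will show that $\mathscr{C}_{\mathcal{Q},m}$ fails to be Jordan recoverable.
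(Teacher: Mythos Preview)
Your proposal follows essentially the same route as the paper: reduce to the case where $m$ satisfies $(o)$ but not $(i*)$, invoke Lemma~\ref{stringstruct3}, and compare $X = M(\nu_-\phi)\oplus M(\mu\phi)$ with $Y = M(\mu_-\phi)\oplus M(\nu\phi)$. Two minor remarks: first, your separate appeal to Proposition~\ref{nonminresult} is redundant, since failure of minusculeness already implies failure of $(o)$ (Lemma~\ref{1imply0}), so Proposition~\ref{not0} alone handles that reduction. Second, your ``symmetry $\nu\leftrightarrow\mu$'' argument for $\mathsf{GenJF}(X)=\mathsf{GenJF}(Y)$ is morally right but imprecise as stated: there is no quiver automorphism exchanging $\nu$ and $\mu$, since by $(\gamma 5)$ their first arrows have opposite orientations at $s(\nu)$. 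The paper instead fixes this orientation (say $\nu$ begins with an incoming arrow, $\mu$ with an outgoing one) and observes directly via Proposition~\ref{morph} that in \emph{both} $X$ and $Y$ the unique nontrivial graph map between summands is the one supported on $\phi$; from this the generic Jordan form is computed explicitly as $(2)$ on $\mathsf{Supp}_0(\phi)$, $(1)$ on $(\mathsf{Supp}_0(\mu)\cup\mathsf{Supp}_0(\nu))\setminus\mathsf{Supp}_0(\phi)$, and $(0)$ elsewhere. Replacing your symmetry sketch with this concrete identification of the graph maps would make the argument complete.
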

	\begin{proof}[Proof]
		Let $m$ be as supposed. We already know that if $m$ does not satisfy $(o)$ then $\mathscr{C}_{\mathcal{Q},m}$ is not Jordan recoverable thanks to Proposition \ref{not0}.
		
		Let us assume that $m$ does satisfy $(o)$. By Lemma \ref{stringstruct3}, there exist three strings $\phi$, $\nu$, and $\mu$ satisfying the conditions stated in the previous lemma.
		
		Without loss of generality, we can assert that the first arrow $\alpha$ of $\nu$ is ingoing to $s(\nu)$ and the first arrow $\beta$ of $\mu$ is therefore outgoing from $s(\nu)$. Define $\nu'$ (respectively $\mu'$) the string obtained from $\nu$ (respectively $\mu$) by deleting its last arrow.
		
		Consider $X = M(\nu' \phi) \oplus M(\mu \phi)$ and $Y =  M(\nu \phi) \oplus M(\mu' \phi)$. We can see that the only significant substring that is on the top of $\mu \phi$ (respectively $\mu' \phi$) and at the bottom of $\nu' \phi$ (respectively $\nu \phi$) is $\phi$. In the following figures, we highlighted the morphisms between summands of $X$ and $Y$ like previously. 
		
		Here is what we have for $X$.
		\begin{center}
			\begin{tikzpicture}[>= angle 60, scale=0.8]
				\node (1) at (0,1){$\bullet$};
				\node (2) at (2,1){$\bullet$};
				\node (3) at (3,0){$\bullet$};
				\node (4) at (5,0){$m$};
				\draw[->] (2) -- node[below left]{$\alpha$} (3);
				\draw[-,decorate, decoration={snake,amplitude=.4mm}] (1) -- (2);
				\draw[-,decorate,decoration={snake,amplitude=.4mm}] (3) -- (4);
				\draw[decorate,decoration={calligraphic brace, amplitude=3pt,mirror}]
				(-.1, -.3) -- (3.1,-.3) node[midway,auto,swap]{$\nu'$};
				\draw[dotted] (2.8,-.2) rectangle (5.2,.2); 
				\node at (4,0.5){$\phi$};
				\node at (2.5,-2.1){$M(\nu' \phi)$};
				\begin{scope}[xshift = 7cm]
					\node (1) at (0,-1){$\bullet$};
					\node (2) at (2,-1){$\bullet$};
					\node (3) at (3,0){$\bullet$};
					\node (4) at (5,0){$m$};
					\draw[->] (3) -- node[above left]{$\beta$} (2);
					\draw[-,decorate, decoration={snake,amplitude=.4mm}] (1) -- (2);
					\draw[-,decorate,decoration={snake,amplitude=.4mm}] (3) -- (4);
					\draw[decorate,decoration={calligraphic brace, amplitude=3pt}]
					(-.1, .3) -- (3.1,.3) node[midway,auto]{$\mu$};
					\draw[dotted] (2.8,-.2) rectangle (5.2,.2); 
					\node at (4,-0.5){$\phi$};
					\node at (2.5,-2.1){$M(\mu \phi)$};
				\end{scope}
			\end{tikzpicture}
			
			$\displaystyle \xymatrix@R=1em @C=1em{
				M(\mu \phi) \ar@<0.1ex>[rr]^\phi \ar[rr] \ar@<-0.1ex>[rr]& &M(\nu' \phi)}$
		\end{center}
		And here is what we have for $Y$.
		\begin{center}
			\begin{tikzpicture}[>= angle 60,scale=0.8]
				\node (1) at (0,1){$\bullet$};
				\node (2) at (2,1){$\bullet$};
				\node (3) at (3,0){$\bullet$};
				\node (4) at (5,0){$m$};
				\draw[->] (2) -- node[below left]{$\alpha$} (3);
				\draw[-,decorate, decoration={snake,amplitude=.4mm}] (1) -- (2);
				\draw[-,decorate,decoration={snake,amplitude=.4mm}] (3) -- (4);
				\draw[decorate,decoration={calligraphic brace, amplitude=3pt,mirror}]
				(-.1, -.3) -- (3.1,-.3) node[midway,auto,swap]{$\nu$};
				\draw[dotted] (2.8,-.2) rectangle (5.2,.2); 
				\node at (4,0.5){$\phi$};
				\node at (2.5,-2.1){$M(\nu \phi)$};
				\begin{scope}[xshift = 7cm]
					\node (1) at (0,-1){$\bullet$};
					\node (2) at (2,-1){$\bullet$};
					\node (3) at (3,0){$\bullet$};
					\node (4) at (5,0){$m$};
					\draw[->] (3) -- node[above left]{$\beta$} (2);
					\draw[-,decorate, decoration={snake,amplitude=.4mm}] (1) -- (2);
					\draw[-,decorate,decoration={snake,amplitude=.4mm}] (3) -- (4);
					\draw[decorate,decoration={calligraphic brace, amplitude=3pt}]
					(-.1, .3) -- (3.1,.3) node[midway,auto]{$\mu'$};
					\draw[dotted] (2.8,-.2) rectangle (5.2,.2); 
					\node at (4,-0.5){$\phi$};
					\node at (2.5,-2.1){$M(\mu' \phi)$};
				\end{scope}
			\end{tikzpicture}
			
			$\displaystyle \xymatrix@R=1em @C=1em{
				M(\mu' \phi) \ar@<0.1ex>[rr]^\phi  \ar[rr] \ar@<-0.1ex>[rr]& & M(\nu \phi)}$
		\end{center}
		Hence, after calculations, we have $\mathsf{GenJF}(X) = \mathsf{GenJF}(Y) = (\lambda^q)_{q \in Q_0}$ such that:
		\begin{itemize}
			\item[•] $\lambda^q = (2)$ if $q \in \mathsf{Supp}_0(\phi)$;
			
			\item[•] $\lambda^q = (1)$ if $q \in \mathsf{Supp}_0(\mu) \cup \mathsf{Supp}_0(\nu) \setminus \mathsf{Supp}_0(\phi)$; 
			
			\item[•] $\lambda^q = (0)$ otherwise.
		\end{itemize}
		
		Thus we conclude that $\mathscr{C}_{\mathcal{Q},m}$ is not Jordan recoverable.
	\end{proof}
	
	Finally we still have to prove that if $m$ does not satisfy $(ii)$, then $\mathscr{C}_{\mathcal{Q},m}$ is not Jordan recoverable. 
	
	\begin{prop} \label{not2*}
		Let $m$ be a vertex which does not satisfy $(ii)$. Then $\mathscr{C}_{\mathcal{Q},m}$ is not Jordan recoverable.
	\end{prop}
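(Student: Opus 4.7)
The plan is to reduce to cases that have already been handled in the preceding work. If $m$ does not satisfy $(o)$, then Proposition \ref{not0} immediately tells us that $\mathscr{C}_{\mathcal{Q},m}$ is not Jordan recoverable, and we are done.

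Hence it suffices to treat the case where $m$ satisfies $(o)$ but fails $(ii)$. I would invoke Lemma \ref{stringstruct0}, which provides a string $\nu$ with $s(\nu) = m$ and three distinct arrows $\alpha, \gamma, \delta$ such that $\delta \nu$, $\gamma^{-1}\nu$ and $\nu \alpha^\varepsilon$ are strings and $\delta\gamma \in I$. With this data in hand, I would reuse the very same pair of representations constructed in the proof of Proposition \ref{not2}, namely
\[
X = M(\delta\nu) \oplus M(\nu \alpha^\varepsilon) \oplus M(\gamma^{-1}\nu \alpha^\varepsilon) \oplus M(\gamma^{-1}\nu),
\]
\[
Y = M(\delta\nu\alpha^\varepsilon) \oplus M(\nu) \oplus M(\gamma^{-1}\nu\alpha^\varepsilon) \oplus M(\gamma^{-1}\nu).
\]
Both lie in $\mathscr{C}_{\mathcal{Q},m}$, they are plainly non-isomorphic (their string summands differ), and the calculation carried out in the proof of Proposition \ref{not2} shows that $\mathsf{GenJF}(X) = \mathsf{GenJF}(Y)$. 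The existence of two non-isomorphic objects of $\mathscr{C}_{\mathcal{Q},m}$ with the same generic Jordan form data directly contradicts the defining property of Jordan recoverability in Definition \ref{JRdef}, which is strictly stronger than the statement that they cannot be separated by a dense open set. This is exactly the observation recorded in Remark \ref{0andnot2}.

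Combining both cases, we conclude that whenever $m$ does not satisfy $(ii)$, the subcategory $\mathscr{C}_{\mathcal{Q},m}$ fails to be Jordan recoverable. The main content of the argument is therefore entirely absorbed into the earlier propositions; the only thing to check is that the construction in Proposition \ref{not2}, originally designed to refute canonical Jordan recoverability, in fact already refutes Jordan recoverability by exhibiting a genuine isomorphism-class collision.
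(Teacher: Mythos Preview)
Your proof is correct and follows essentially the same approach as the paper: both reduce to the observation (Remark \ref{0andnot2}) that the pair $X,Y$ constructed in Proposition \ref{not2} already witnesses a failure of Jordan recoverability, not just canonical Jordan recoverability, under the hypothesis that $m$ satisfies $(o)$ but not $(ii)$. The only cosmetic difference is the case split: the paper first disposes of the case where $(i*)$ fails via Proposition \ref{not1*} and then uses $(i*)\Rightarrow(o)$, whereas you split directly on $(o)$ using Proposition \ref{not0}; since Lemma \ref{stringstruct0} requires precisely $(o)$, your split is arguably the more natural one.
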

	
	\begin{proof}[Proof]
		Let $m$ be as supposed. We already know that if $m$ does not satisfy $(i*)$ then $\mathscr{C}_{\mathcal{Q},m}$ is not Jordan recoverable. Let us assume that $m$ satisfies $(i*)$.
		
		From Lemma \ref{1imply0}, we know that $(i*)$ implies $(o)$. Futhermore if we pay attention to the proof of Proposition \ref{not2}, thanks to Lemma \ref{stringstruct0}, we proved in fact that if $m$ satisfies $(o)$ but not $(ii)$, then $\mathscr{C}_{\mathcal{Q},m}$ is not Jordan recoverable (as we said in Remark \ref{0andnot2}). Thus we conclude.
	\end{proof}
	
	Let us recap the proof of the second main result.
	
	\begin{proof}[Proof of Theorem \ref{2ndmain2}]
		Thanks to Proposition \ref{1*and2}, we showed that $(i*)$ and $(ii)$ are sufficient. Then we showed with Proposition \ref{not1*} and Proposition \ref{not2*} that these conditions are necessary. As a result, we have the equivalence we wished for.
	\end{proof}
	
	\section{To go further}
	
	Here are some ideas or questions that we could ask ourselves based on our work.
	\begin{itemize}
		\item[•] \textit{As the proof of the main result uses  string combinatorics, can we extend that result to string algebras or skew-gentle algebras ?}
	\end{itemize}
	We can also think about string algebras and skew-gentle algebras that are algebras which have a similar description of their indecomposable representations. However working with them does not seem as easy as the gentle case. We can at least expect that the description of vertices $m$ such that $\mathscr{C}_{\mathcal{Q},m}$ is canonically Jordan recoverable could be something of the same flavor, with a few more restrictions.
	
	Nonetheless, we can notice that the results can be easily extended to the case of locally gentle algebras.
	\begin{itemize}
		\item[•] \textit{What are exactly all the canonically Jordan recoverable subcategories of modules over a gentle algebra given by a gentle quiver $\mathcal{Q}$ ?}
	\end{itemize}
	From a bit more work over $A_n$-type quivers, we have a conjecture that could characterize all the subcategories that are canonically Jordan recoverable.
	\begin{conj} \label{AnType}
		Let $\mathcal{Q}$ be an $A_n$-type quiver. A subcategory $\mathscr{C}$ of $\mathsf{rep}(\mathcal{Q})$ is canonically Jordan recoverable if and only if for any pair of strings $(\rho,\nu)$ which are associated to indecomposable representations of $\mathscr{C}$, we do not have an arrow $\alpha \in Q_1$ such that $s(\alpha) \in \mathsf{Supp}_0
		(\rho)$, $t(\alpha) \in \mathsf{Supp}_1(\nu)$ and $\alpha \notin \mathsf{Supp}_1(\rho) \cup \mathsf{Supp}_1(\nu)$.\end{conj}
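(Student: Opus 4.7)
My plan follows the structure of the proof of Theorem \ref{main}: establish necessity by obstruction and sufficiency by direct construction, in both cases replacing the distinguished vertex $m$ by a tree of overlapping strings extracted from $\mathscr{C}$ itself. In line with condition $(i)$ of Theorem \ref{main}, the statement should read $t(\alpha) \in \mathsf{Supp}_0(\nu)$ rather than $t(\alpha) \in \mathsf{Supp}_1(\nu)$; I work with that corrected formulation.

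The necessity direction is a direct reprise of Proposition \ref{not1}. Suppose $M(\rho)$ and $M(\nu)$ are indecomposable summands of some object of $\mathscr{C}$, and $\alpha \in Q_1$ is an arrow with $s(\alpha) \in \mathsf{Supp}_0(\rho)$, $t(\alpha) \in \mathsf{Supp}_0(\nu)$, and $\alpha \notin \mathsf{Supp}_1(\rho) \cup \mathsf{Supp}_1(\nu)$. Then $X := M(\rho) \oplus M(\nu) \in \mathscr{C}$ has $X_\alpha = 0$. The rank-one modification used in Proposition \ref{not1} produces a representation $W \in \mathsf{rep}(\mathcal{Q}, \mathsf{GenJF}(X))$ with $W_\alpha \neq 0$, hence $W \not\cong X$, showing that $\mathscr{C}$ is not canonically Jordan recoverable.

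For sufficiency I propose a four-step argument. \emph{Step 1.} Let $\Sigma(\mathscr{C})$ be the set of strings attached to the indecomposable summands of objects of $\mathscr{C}$, and set $Q_1^{\mathscr{C}} := \bigcup_{\rho \in \Sigma(\mathscr{C})} \mathsf{Supp}_1(\rho)$. For any arrow $\alpha \notin Q_1^{\mathscr{C}}$, the condition forces either $X_{s(\alpha)} = 0$ or $X_{t(\alpha)} = 0$ for every $X \in \mathscr{C}$ (otherwise one finds $M(\rho), M(\nu) \in \mathscr{C}$ with $s(\alpha) \in \mathsf{Supp}_0(\rho)$, $t(\alpha) \in \mathsf{Supp}_0(\nu)$ and $\alpha$ in neither support, violating the hypothesis). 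Since $\vdim(Y) = \vdim(X)$ for every $Y \in \mathsf{rep}(\mathcal{Q}, \mathsf{GenJF}(X))$, this gives $Y_\alpha = 0$ automatically, and the problem reduces to each connected component of the subquiver spanned by $Q_1^{\mathscr{C}}$, itself an $A_k$-type quiver. \emph{Step 2.} Inside each such component I define a total order on $\Sigma(\mathscr{C})$ extending the Brenner-style order used in Proposition \ref{1*et2aGenJF}: form the overlap graph whose vertices are the strings in the component and whose edges encode the existence of a common vertex, pick a spanning tree rooted at a string of maximal length, and linearise by depth-first traversal. \emph{Step 3.} Using this order I construct, for any $X = \bigoplus_{\rho} M(\rho)^{d_\rho} \in \mathscr{C}$, an explicit nilpotent endomorphism whose Jordan form at each vertex $q$ is the one-part partition $(\dim X_q)$, so that $\mathsf{GenJF}(X) = ((\dim X_q))_{q \in Q_0}$ by Theorem \ref{GPT3}. \emph{Step 4.} I run the induction of Proposition \ref{1et2a}: analogues of Lemmas \ref{injsurj} and \ref{decomp} show that a generic $Y \in \mathsf{rep}(\mathcal{Q}, \mathsf{GenJF}(X))$ has maps of maximal rank along the arrows of $Q_1^{\mathscr{C}}$, which lets outermost string summands be peeled off one by one.

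The main obstacle is Step 2. In the setting $\mathscr{C} = \mathscr{C}_{\mathcal{Q},m}$ of Theorem \ref{main}, Brenner's order is rooted at the common endpoint $m$ and every string in $\Sigma_{\mathcal{Q}}(m)$ has $m$ as one endpoint. For an arbitrary $\mathscr{C}$ no such common vertex need exist, as already in the example $\mathscr{C} = \mathsf{add}(M([1,2]), M([2,4]), M([4,5])) \subset \mathsf{rep}(A_5)$ with linear orientation, whose three string summands pairwise overlap along a path but share no single common vertex. The order must therefore be defined globally on the overlap tree, and the crux of the proof will be verifying that the commutative-square relations of the nilpotent endomorphism built in Step 3 are satisfied at every arrow (including the dead ones). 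This is precisely where the hypothesis is used decisively: it forbids exactly the configurations that would create an inconsistency when propagating the endomorphism across the overlap tree. Once the order is in hand, Steps 3 and 4 should follow from routine adaptations of the arguments already in the paper.
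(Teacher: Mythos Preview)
The statement you are trying to prove is stated in the paper as a \emph{conjecture}, not a theorem. The paper does not give a proof; it only remarks that necessity follows from the argument of Proposition \ref{not1} and says explicitly that ``it is still difficult to explain why it is sufficient''. Your necessity argument matches this exactly. Your sufficiency argument, on the other hand, is an attempt to settle an open problem, so it should be held to the corresponding standard --- and it contains a genuine error.

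The error is in Step 3. You claim that for every $X \in \mathscr{C}$ one has $\mathsf{GenJF}(X) = ((\dim X_q))_{q \in Q_0}$, i.e.\ a single Jordan block at each vertex. This is false already in $A_3$ with orientation $1 \xrightarrow{a} 2 \xrightarrow{b} 3$. Take $\mathscr{C} = \mathsf{add}(M([1,3]),\, S_2)$, where $[1,3]$ denotes the string $ba$. One checks immediately that the condition of the conjecture is satisfied: the only arrows are $a$ and $b$, both lie in $\mathsf{Supp}_1([1,3])$, and so no forbidden arrow exists for any pair. Now let $X = M([1,3]) \oplus S_2$. Any nilpotent endomorphism $N$ of $X$ has $N_1 = N_3 = 0$, and the commutative squares force $N_2(\Im X_a) = 0$ and $\Im N_2 \subseteq \Ker X_b$; since $\Im X_a$ and $\Ker X_b$ are complementary lines in $X_2 \cong \mathbb{K}^2$, nilpotency forces $N_2 = 0$. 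Hence $\mathsf{GenJF}(X) = ((1),(1,1),(1))$, not $((1),(2),(1))$. The single-block phenomenon in Proposition \ref{1*et2aGenJF} depends essentially on the hypothesis $(ii)(a)$ that all strings share the endpoint $m$, which is precisely what gives the total Brenner order and the chain of nonzero graph maps; an arbitrary $\mathscr{C}$ satisfying the conjecture's condition has no such structure, and your overlap-tree linearisation cannot manufacture the missing morphisms.

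Since Step 3 fails, Step 4 (which presupposes the single-block shape to get the maximal-rank statement of Lemma \ref{injsurj}) collapses as well. A correct proof of sufficiency, if one exists, will have to deal with genuinely multi-part partitions in $\mathsf{GenJF}(X)$ and cannot simply transplant the machinery of Section \ref{sub:proof1}.
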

	Thanks to ideas coming from the Example \ref{JRnotCJRex} and the familiarity with the hypothesis $(i)$ of the Theorem \ref{main2}, we can highlight the fact that it is a necessary condition. Actually, we can use the proof of Proposition \ref{not1} in this particular case to conclude. 
	
	However it is still difficult to explain why it is sufficient. Hopefully, if this statement is true, we could expect to extend that result to characterize all the canonically Jordan recoverable subcategories of representations over a gentle quiver.
	\begin{itemize}
		\item[•] \textit{Can we translate our combinatorial condition on the vertex $m$ into conditions in terms of quiver representation theory on the category $\mathscr{C}_{\mathcal{Q},m}$ ?}
	\end{itemize}
	This is not an easy question. At this point, we could expect to get a translation of the conditions from Conjecture \ref{AnType} in terms of not having an extension between two indecomposable representations which do not have a non-zero morphism in either way.
	\begin{itemize}
		\item[•] \textit{Is there a combinatorial map extending the Robinson--Schensted--Knuth correspondance ?}
	\end{itemize}
	Let $Q$ be a Dynkin quiver with $Q_0 = \{1, \ldots, n\}$. In the article \cite{GPT19}, they study the structure of Jordan form data in the minuscule case. For any minuscule vertex $m$, there is a minuscule poset $\mathscr{P}_{\mathcal{Q},m}$ for whose definition we refer to \cite[Section 4.1]{GPT19}. This minuscule poset is equipped with a map $\pi$ to the vertices of $Q$. In particular, $\pi$ satisfies that each fiber $\pi^{-1}(j)$ is totally ordered. 
	\begin{theorem}[\cite{GPT19}] $ $
		\begin{itemize}
			\item[$(i)$] For any $X \in \mathscr{C}_{Q,m}$ and $j \in Q_0$, the number of parts in the partition $\mathsf{GenJF}(X)^j$ is less or equal to the size of the fibre $\pi^{-1}(j)$.
			
			\item[$(ii)$] For $X \in \mathscr{C}_{Q,m}$, define a map $\rho_{Q,m}:\mathscr{P}_{Q,m} \longrightarrow \mathbb{N}$ as follows: the values of $\rho_{Q,m}(X)$ restricted to $\pi^{-1}(j)$ are the entries of $\mathsf{GenJF}(X)^j$, padded with extra zeros if needed, and ordered so that, restricted to $\pi^{-1}(j)$, the function is order-reversing. Then $\rho_{Q,m}$ is order-reversing as a map from $\mathscr{P}_{Q,m}$ to $\mathbb{N}$.
			
			\item[$(iii)$] The map from isomorphism classes of $\mathscr{C}_{Q,m}$ to order-reversing maps from $\mathscr{P}_{Q,m}$ to $\mathbb{N}$, sending $X$ to $\rho_{Q,m}(X)$ is a bijection.
		\end{itemize}
	\end{theorem}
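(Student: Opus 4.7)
The plan is to exploit the bijection between isomorphism classes of indecomposable representations in $\mathscr{C}_{Q,m}$ and the elements of the minuscule poset $\mathscr{P}_{Q,m}$, writing $X(p)$ for the indecomposable indexed by $p$. In the minuscule case each $X(p)$ is multiplicity-free; one has $\dim X(p)_j = 1$ exactly when there exists $q \leq p$ with $\pi(q) = j$, so that $X(p)$ is supported on the principal order ideal of $\mathscr{P}_{Q,m}$ generated by $p$. Any $X \in \mathscr{C}_{Q,m}$ then decomposes as $X = \bigoplus_{p \in \mathscr{P}_{Q,m}} X(p)^{m_p}$, and $X$ is determined up to isomorphism by the multiplicity vector $(m_p)_p$.

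To prove $(i)$ and $(ii)$, I would construct an explicit nilpotent endomorphism $N$ achieving the generic Jordan form, mimicking the construction in the proof of Proposition \ref{1*et2aGenJF}. First fix a linear extension of the poset order on $\mathscr{P}_{Q,m}$; choose a basis of each $X_j$ indexed by pairs $(p,k)$ with $j$ in the support of $X(p)$ and $1 \leq k \leq m_p$, ordered lexicographically by this linear extension; and then define $N_j$ by sending each basis vector to its predecessor in this total order, annihilating those basis vectors corresponding to pairs $(p,k)$ with $p$ minimal in its fiber $\pi^{-1}(j)$ among elements supporting nonzero summands. Direct verification shows that $N$ commutes with the arrow maps of $X$, and a maximality argument of the type proved in \cite[Theorem 2.3]{GPT19} identifies the resulting Jordan type with $\mathsf{GenJF}(X)$. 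For $(i)$, the number of parts of $\mathsf{GenJF}(X)^j$ equals $\dim \Ker N_j$, which by construction counts basis vectors minimal in their fiber, hence is bounded by $\#\pi^{-1}(j)$. For $(ii)$, each Jordan block of $N_j$ of a given size corresponds to a chain in $\mathscr{P}_{Q,m}$ meeting the fiber $\pi^{-1}(j)$ at a designated element; since moving upward in the poset (either within or across fibers) can only shorten such chains, $\rho_{Q,m}(X)$ is order-reversing globally, not merely on each fiber.

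For $(iii)$, I would construct the inverse by a combinatorial inclusion--exclusion on $\mathscr{P}_{Q,m}$. Given an order-reversing $\rho : \mathscr{P}_{Q,m} \to \mathbb{N}$, define $m_p := \rho(p) - \max\{\rho(q) : q \text{ covers } p\}$, with the convention that the maximum over the empty set is $0$ at maximal elements; the order-reversing hypothesis is precisely what forces $m_p \geq 0$. Then $X_\rho := \bigoplus_p X(p)^{m_p}$ should satisfy $\rho_{Q,m}(X_\rho) = \rho$ by the explicit Jordan construction of the previous step, and injectivity on isomorphism classes is automatic because $X_\rho$ is uniquely recovered from the multiplicities $(m_p)$.

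The main obstacle is precisely this last verification: showing that the naive cover-based inclusion--exclusion for $(m_p)$ really reproduces the Jordan form data on each fiber in the prescribed ordering, and that the designated element of $\pi^{-1}(j)$ attached to a given Jordan block varies consistently across the poset. This requires delicate bookkeeping of how chains in $\mathscr{P}_{Q,m}$ decompose under $\pi$; the most tractable route appears to be an induction on $\#\mathscr{P}_{Q,m}$, peeling off one maximal element at each step (which corresponds to removing a projective--injective summand of $X$) and exploiting the well-known description of minuscule posets as distributive lattices of order ideals of root posets to propagate the combinatorial identity.
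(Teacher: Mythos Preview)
This theorem is not proved in the present paper: it is quoted from \cite{GPT19} in the closing ``To go further'' section and no argument is supplied here. There is therefore no proof in this paper to compare your proposal against; any comparison would have to be made with the original arguments in \cite{GPT19}, which lie outside the scope of this manuscript.

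That said, let me flag one genuine issue in your sketch. Your proposed inverse in part $(iii)$, namely
\[
m_p \;=\; \rho(p) - \max\{\rho(q) : q \text{ covers } p\},
\]
is not correct in general. Already in type $A$ with the minuscule vertex in the interior (so that $\mathscr{P}_{Q,m}$ is a product of two chains rather than a single chain), the Jordan block sizes at a vertex $j$ are sums of multiplicities $m_p$ over certain \emph{intervals} in the fibre $\pi^{-1}(j)$, and recovering the $m_p$ from $\rho$ requires a two-sided difference along the fibre together with compatibility across fibres, not a one-sided maximum over covers. Your formula can also return negative values when an element has several covers with different $\rho$-values even though $\rho$ is order-reversing. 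The actual inverse in \cite{GPT19} is built via a promotion/toggle-type bijection (their generalised RSK), not by a local inclusion--exclusion on covers; the obstacle you identify at the end of your proposal is real and is where the substantive work in \cite{GPT19} lies.
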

	As an enumerative corollary of the previous result, thanks to an interesting link to reverse plane partitions, the isomorphism classes of representations in $\mathscr{C}_{Q,m}$ have the following generating function.
	\begin{theorem}[\cite{P84}, \cite{GPT19}]
		For $Q$ a Dynkin quiver and $m$ a minuscule vertex, we have
		\begin{center}
			$\displaystyle \sum_{X \in \mathscr{C}_{Q,m}} \prod_{i=1}^n q_i^{\dim(X_i)} = \prod_{u \in \mathscr{P}_{Q,m}} \dfrac{1}{1 - \prod_{i=1}^n q_i^{\dim((M_u)_i)}}$
		\end{center}
		where:
		\begin{itemize}
			\item[•] the sum is over isomorphism classes of representations in $\mathscr{C}_{Q,m}$;
			
			\item[•] in the product, $M_u \in \mathscr{C}_{\mathcal{Q},m}$ is the indecomposable representation of $Q$ corresponding to $u \in \mathscr{P}_{Q,m}$
		\end{itemize}
	\end{theorem}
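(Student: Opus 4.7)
The plan is to obtain the identity as a direct consequence of Krull--Schmidt applied to $\mathscr{C}_{Q,m}$. The first step is to note that, up to isomorphism, the indecomposable objects of $\mathscr{C}_{Q,m}$ are exactly $\{M_u \mid u \in \mathscr{P}_{Q,m}\}$; this indexing is supplied by the preceding theorem (and can also be seen directly from the Auslander--Reiten theory of the Dynkin quiver $Q$, restricted to indecomposables supported at $m$). Since $\mathsf{rep}(\mathcal{Q})$ is Krull--Schmidt, every $X \in \mathscr{C}_{Q,m}$ decomposes uniquely (up to reordering) as $X \cong \bigoplus_{u \in \mathscr{P}_{Q,m}} M_u^{a_u}$, yielding a bijection between isomorphism classes in $\mathscr{C}_{Q,m}$ and finitely supported tuples $(a_u) \in \mathbb{N}^{\mathscr{P}_{Q,m}}$.

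Next I would rewrite the weight of $X$ in terms of these multiplicities. From the decomposition, $\dim(X_i) = \sum_u a_u \dim((M_u)_i)$ for every vertex $i$, so setting $w_u := \prod_{i=1}^n q_i^{\dim((M_u)_i)}$ one obtains
\[ \prod_{i=1}^n q_i^{\dim(X_i)} = \prod_{u \in \mathscr{P}_{Q,m}} w_u^{a_u}. \]
Summing over isomorphism classes in $\mathscr{C}_{Q,m}$ then turns the left hand side of the claimed identity into $\sum_{(a_u)} \prod_u w_u^{a_u}$, where the sum ranges over finitely supported tuples.

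The final step is formal: since $M_u \neq 0$ for each $u$, the monomial $w_u$ has strictly positive total degree in the $q_i$, so the sum factors by Fubini for formal power series as
\[ \prod_{u \in \mathscr{P}_{Q,m}} \sum_{a_u \geq 0} w_u^{a_u} = \prod_{u \in \mathscr{P}_{Q,m}} \frac{1}{1 - w_u}, \]
which is precisely the right hand side of the theorem. No step should present a real obstacle: the nontrivial content (the bijection between isomorphism classes in $\mathscr{C}_{Q,m}$ and order-reversing maps $\mathscr{P}_{Q,m} \to \mathbb{N}$, together with the identification of indecomposables with elements of $\mathscr{P}_{Q,m}$) has already been established by the preceding theorem, and what is left is the standard expansion of a multivariate geometric series. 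The reference to \cite{P84} appears because this generating function also admits a sharper product formula directly over the covering structure of the minuscule poset via Proctor's theorem on reverse plane partitions; but that stronger factorization is not needed to deduce the identity as stated.
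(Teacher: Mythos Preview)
Your proof is correct. The paper itself does not supply a proof of this statement; it merely cites \cite{P84} and \cite{GPT19} and frames the identity as an enumerative corollary of the preceding bijection between isomorphism classes in $\mathscr{C}_{Q,m}$ and order-reversing maps $\mathscr{P}_{Q,m} \to \mathbb{N}$. Your argument is in fact more economical than that framing suggests: you only use the indexing of indecomposables by $\mathscr{P}_{Q,m}$ together with Krull--Schmidt, and then the geometric-series factorization is immediate. The full strength of part $(iii)$ of the preceding theorem --- that the multiplicity tuple, reinterpreted via $\mathsf{GenJF}$, lands in the order-reversing maps and that this is a bijection --- plays no role in the identity as written. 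You also correctly diagnose the citation to \cite{P84}: Proctor's result refines the right-hand side into a product governed by the poset's covering relations, which is a genuinely stronger statement than the one displayed here.
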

	One of its combinatorial applications is that we can consider the map $\rho_{Q,m}$ from $\mathscr{C}_{Q,m}$ to reverse plane partitions over $\mathscr{P}_{Q,m}$ to be a generalization of the classical Robinson--Schensted--Knuth (RSK) correspondence. Indeed this map has the same Green-Kleitman invariants \cite{GP17}.
	
	From this perspective, we could expect to generalize the RSK correspondence in the case of gentle algebras, string algebras, or skew-gentle algebras, and even in the case, we work with other Jordan recoverable categories that have more interesting behavior. One can note that our Jordan recoverable categories $\mathscr{C}_{\mathcal{Q},m}$ are similar to the $A_n$ quiver case. So it does not seem unreasonable to say that we can have the same results, without any more interesting things to add. However it could come to be more impactful to get that result after having characterized what are all the Jordan recoverable categories in these new cases.
	
	The reader is invited to have a look into these different problems or things related to them.
	
	\section*{Acknowlegments}
	
	I want first to thank Yann Palu for all I learned about gentle algebras and their combinatorial behavior, and for all his support. 
	
	Then I want to acknowledge the Université du Québec à Montréal and the Institut des Sciences Mathématiques for allowing me to complete my work.
	
	In addition, I would like to thank Claire Amiot, François Bergeron, and Frédéric Chapoton for the couple of advice and help they gave me in their reading of this article as a part of my Ph.D. thesis. 
	
	Last but not least, I owe a great deal of thanks to my Ph.D. supervisor Hugh Thomas who shares with me his time, his advice, his support, his help, to get to this main result and to write this article even if we had to work with some thousands of kilometers between us on the grounds of the sad pandemic situation.

	\nocite{*}
	\bibliography{Jordan_recoverability_of_some_subcategories_of_modules_over_gentle_algebras_v3}
	\bibliographystyle{alpha}
	
\end{document}